\newcommand{\lz}{\left(}
\newcommand{\pz}{\right)}
\renewcommand{\epsilon}{\varepsilon}
\newcommand{\C}{\mathbb{C}}
\newcommand{\Z}{\mathbb{Z}}
\newcommand{\sumstar}{\sideset{}{^{*}}\sum}
\newcommand{\R}{\mathbb{R}}
\newcommand{\lab}{\left\lvert }
\newcommand{\rab}{\right\rvert }
\newcommand{\M}{\mathcal{M}}
\newcommand{\re}{\mathrm{Re}}
\newcommand{\im}{\mathrm{Im}}
\newcommand{\bfrac}[2]{\lz\frac{#1}{#2}\pz}
\newcommand{\Go}{\Gamma_{o}}
\newcommand{\Ge}{\Gamma_{e}}
\newcommand{\res}{\mathrm{res}}
\renewcommand{\mod}[1]{\text{ (mod $#1$)}}
\newcommand{\odd}{\mathrm{\ odd}}
\newcommand{\even}{\text{ even}}
\renewcommand{\l}{\ell}
\renewcommand{\phi}{\varphi}
\newcommand{\leg}[2]{\left(\frac{#1}{#2}\right)}
\newtheorem{theorem}{Theorem}
\newtheorem{lemma}[theorem]{Lemma}
\newtheorem{cor}[theorem]{Corollary}
\newtheorem{prop}[theorem]{Proposition}
\newtheorem{defin}[theorem]{Definition}
\newtheorem{conjecture}[theorem]{Conjecture}
\numberwithin{theorem}{section}
\numberwithin{equation}{section}
\begin{document}
	
	\title[The Ratios conjecture and multiple Dirichlet series]{The Ratios conjecture for real Dirichlet characters and multiple Dirichlet series}
	\author{Martin \v Cech}
	\email{martinxcech@gmail.com}
	\address{Concordia University, Department of Mathematics and Statistics, 1455 de Maisonneuve West, H3G 1M8, Montreal, Canada}
	\classification{11M26 (primary), 11M32 (secondary).}
	\keywords{Dirichlet L-functions, Ratios conjecture, multiple Dirichlet series, Dirichlet characters, functional equation}

	\begin{abstract}
		Conrey, Farmer and Zirnbauer introduced a recipe to find asymptotic formulas for the sum of ratios of products of shifted L-functions. These ratios conjectures are very powerful and can be used to determine many statistics of L-functions, including moments or statistics about the distribution of zeros.
		
		We consider the family of real Dirichlet characters, and use multiple Dirichlet series to prove the ratios conjectures with one shift in the numerator and denominator in some range of the shifts. This range can be improved by extending the family to include non-primitive characters. All of the results are conditional under the Generalized Riemann hypothesis.
		
		This extended range is good enough to enable us to compute an asymptotic formula for the sum of shifted logarithmic derivatives near the critical line. As an application, we compute the one-level density for test functions whose Fourier transform is supported in $\left(-2,2\right)$, including lower-order terms.
	\end{abstract}
	
	\maketitle
	
	% A table of contents should normally not be included
	\section{Introduction}
	
	In \cite{Mon}, Montgomery studied the pair correlation of zeros of the Riemann zeta function, and Dyson famously observed that the resulting density matches that of the pair correlation of eigenvalues in a GUE ensemble of random matrices. Since then, it is believed that random matrix theory can be used to model many statistics of L-functions, such as moments \cite{CFKRS}, or the distribution of low-lying zeros \cite{KaSa1}, \cite{KaSa2}.
	
	Farmer \cite{Far} conjectured that \begin{equation}\label{key}
		\int_{0}^T\frac{\zeta(s+\alpha)\zeta(1-s+\beta)}{\zeta(s+\gamma)\zeta(1-s+\delta)}dt\sim T\frac{(\alpha+\delta)(\beta+\gamma)}{(\alpha+\beta)(\gamma+\delta)}-T^{1-\alpha-\beta}\frac{(\delta-\beta)(\gamma-\alpha)}{(\alpha+\beta)(\gamma+\delta)},
	\end{equation}
	where $s=1/2+it$, and $\alpha,\beta,\gamma,\delta\asymp\frac{1}{\log T},$ and noticed that it has many implications including Montgomery's pair correlation conjecture.
	
	In \cite{CFZ1}, \cite{CFZ2}, based on the conjecture above, Conrey, Farmer and Zirnbauer came up with the ratios conjectures, which give a general recipe to predict asymptotic formulas for the sum of ratios of products of shifted L-functions. These are very powerful as they are able to predict many other local or global statistics, which agree with the predictions coming from random matrix theory (see \cite{CoSn} for some applications). The conjectured asymptotics are expected to hold with a very strong error term, so, unlike random matrix theory, they are also able to predict lower order terms.
	
	\smallskip
	
	We will study the family of quadratic Dirichlet L-functions with one shift in the numerator and in the denominator. For a fundamental discriminant $d$, we denote by $\chi_d$ the Kronecker symbol $\leg{d}{\cdot}$. In this case, the ratios conjecture has the form \begin{equation}\label{ratios conjecture}
		\begin{aligned}
			\sumstar_{d\leq X}&\frac{L(1/2+\alpha,\chi_d)}{L(1/2+\beta,\chi_d)}=\sumstar_{d\leq X}\Bigg(\frac{\zeta(1+2\alpha)}{\zeta(1+\alpha+\beta)}A_D(\alpha,\beta)\\
			&+\bfrac{\pi}{d}^{\alpha}\frac{\Gamma(1/4-\alpha/2)\zeta(1-2\alpha)}{\Gamma(1/4+\alpha/2)\zeta(1-\alpha+\beta)}A_D(-\alpha,\beta)\Bigg)+O(X^{1/2+\epsilon}),
		\end{aligned}
	\end{equation}
	where the star indicates that the sums run over fundamental discriminants, and
	\begin{equation}\label{key}
		A_D(\alpha,\beta)=\prod_p\lz1-\frac1{p^{1+\alpha+\beta}}\pz^{-1}\lz1-\frac1{(p+1)p^{1+2\alpha}}-\frac{1}{(p+1)p^{\alpha+\beta}}\pz.
	\end{equation} The original conjecture asserts that this formula holds uniformly in $\alpha,\beta$ with $\lvert\re(\alpha)\rvert <1/4$, $\frac1{\log X}\ll\re(\beta)<1/4$, and $\im(\alpha),\im(\beta)\ll X^{1-\epsilon}$. We, conditionally under the generalized Riemann hypothesis (GRH), prove the conjecture in a smaller range of the shifts.
	
	We are not aware of any other results towards the proof of this conjecture over number fields. See the recent work of Bui, Florea and Keating \cite{BFK2} for similar results over function fields, using a different method.
	
	We now briefly describe the recipe of Conrey, Farmer and Zirnbauer. The L-functions in the numerator are replaced by Dirichlet polynomials using the approximate functional equation, while those in the denominator are expanded into a full Dirichlet series. To obtain the main terms in the conjecture, the sums are completed, only the diagonal terms are retained, and they are replaced by their average over the family.
	
	Our strategy is to use Mellin inversion (or Perron's formula), which leads to an integral that involves a triple Dirichlet series $A(s,w,z)$. The main terms arise naturally from residues of $A(s,w,z)$, which gives another evidence for the validity of the heuristics of Conrey, Farmer and Zirnbauer. An advantage of our approach is that there are no non-diagonal terms to be bounded. The error term depends on how far we can meromorphically continue the triple Dirichlet series, which we obtain from showing that $A(s,w,z)$ satisfies some functional equations.

	\smallskip
	Let us now state our results, which are conditional under GRH. Theorem \ref{Theorem for fundamental discriminants} is weaker than Theorem \ref{Theorem for all characters}, its purpose is mainly to illustrate the method of the proof and to illustrate how the main terms arise from the residues of the multiple Dirichlet series. The difference in the results can be explained on the level of functional equations satisfied by the associated multiple Dirichlet series, which is explained in Section \ref{Section overview and heuristic}.
	
	Let \begin{equation}\label{key}
		R_D(X,\alpha,\beta;f)=\sumstar_{d\geq1}\frac{L(1/2+\alpha,\chi_d)}{L(1/2+\beta,\chi_d)}f(d/X),
	\end{equation}
	where the sum runs over positive fundamental discriminants. We also let $f(x)$ be a smooth, fast-decaying weight function.
	
	To simplify some of the formulas, we denote by $\Gamma_e(s)$ and $\Gamma_o(s)$ the ratios of gamma factors that appear in the functional equation for even or odd characters, so that
	\begin{equation}\label{key}
		\Ge(s)=\frac{\Gamma\bfrac{1-s}{2}}{\Gamma\bfrac s2}, \hspace{20pt}
		\Go(s)=\frac{\Gamma\bfrac{2-s}{2}}{\Gamma\bfrac {s+1}2}.
	\end{equation} We denote by $\M f(s)$ the Mellin transform of $f(s)$, defined by \eqref{eqn: Mellin transform def}.
	
	\begin{theorem}\label{Theorem for fundamental discriminants}
		Assume GRH, and let $0<\lvert \re(\alpha)\rvert<\re(\beta)<1/2$. Then
		\begin{equation}\label{Result all characters}
			\begin{aligned}
				R_D&(X,\alpha,\beta;f)=\frac{X\M f(1)\zeta(1+2\alpha)}{2\zeta(2)\zeta(1+\alpha+\beta)}P_D(1/2+\beta,1/2+\alpha)\\
				&+\frac{X^{1-\alpha}\M f(1-\alpha)\zeta(1-2\alpha)\pi^\alpha\Ge(1/2+\alpha)}{2\zeta(2)\zeta(1-\alpha+\beta)}P_D(1/2+\beta,1/2-\alpha)\\
				&+O_{\alpha,\beta}(X^{1-\re(\alpha)/2-\re(\beta)/2+\epsilon}),
			\end{aligned}
		\end{equation}
		where 
		\begin{equation}\label{key}
			P_D(z,w)=\prod_p\lz1+\frac{1-p^{z-w}}{(p^{z+w}-1)(p+1)}\pz.
		\end{equation}
	\end{theorem}
	The error term in this result is not uniform in the shifts $\alpha$, $\beta$. It is possible to obtain a uniform result using a similar estimate as in Section \ref{Section bound in vertical strips}, which would introduce a factor of size $\lvert \alpha\rvert ^{c+\epsilon}\lvert \beta\rvert ^{\epsilon}$ for some $c>0$ into the error term, limiting the size of the imaginary parts. Note also that this theorem doesn't hold in the range $\alpha=\beta,$ which is important for some applications. Both of these aspects can be improved by making the sum run over all quadratic characters, not only the primitive ones, as shown in Theorem \ref{Theorem for all characters}.
	
	The main terms from \eqref{Result all characters} agree with those in \eqref{ratios conjecture} (adjusted for the smooth weights). We remark that the computations leading to them are the same as those in the recipe of Conrey, Farmer and Zirnbauer, but they come from a different source. While in the heuristic, one obtains the main terms by discarding the non-diagonal terms assuming that they only contribute into the error, in our case they come from the residues of certain triple Dirichlet series.
	
	\medskip
	
	It turns out that we can get a better range of the shifts if we extend our family to contain all characters, including non-primitive ones. The reason is that in this case, the associated triple Dirichlet series has an extra functional equation. We will change notation from Section \ref{Section series for all characters} on and denote by $\chi_n$ the Jacobi symbol $\leg{\cdot}{n}$. We denote by $L_{(2)}(s,\chi_n)$ the L-function with the Euler factor at 2 removed. Then we get the following result:
	
	\begin{theorem}\label{Theorem for all characters}
		Assume GRH and let $\epsilon>0$. Then for $\re(\alpha)>0$, $\re(\beta)>\epsilon$, $1+\re(\beta)>\re(\alpha)$, we have
		\begin{equation}\label{Asymptotic for ratios of all characters}
			\begin{aligned}
				&\sum_{n\odd}\frac{L_{(2)}(1/2+\alpha,\chi_n)}{L_{(2)}(1/2+\beta,\chi_n)}f(n/X)\\
				&=X\M f(1)\frac{\zeta_{(2)}(1+2\alpha)}{2\zeta_{(2)}(1+\alpha+\beta)}\prod_{p>2}\lz1+\frac{p^{\alpha-\beta}-1}{p^{1+\alpha-\beta}(p^{1+\alpha+\beta}-1)}\pz\\
				&+X^{1-\alpha}\M f(1-\alpha)\pi^{\alpha}\lz\Go\lz\frac12+\alpha\pz+\Ge\lz\frac12+\alpha\pz\pz\frac{\zeta(1-2\alpha)P\lz\frac32-\alpha+\beta\pz}{\zeta(2)\zeta(1-\alpha+\beta)(6-2^{\alpha-\beta+1})}\\
				&+O\lz(1+\lvert \alpha\rvert )^\epsilon\lvert \beta\rvert ^\epsilon X^{N(\alpha,\beta)+\epsilon}\pz,
			\end{aligned}
		\end{equation} where
		\begin{equation}\label{key}
			P(z)=\prod_{p}\lz1+\frac{1}{(p^{z-1/2}-1)(p+1)}\pz,
		\end{equation} and 
		\begin{equation}\label{key}
			N(\alpha,\beta)=\max\left\{1-2\re(\alpha),1-2\re(\beta),\frac12-\re(\alpha),\frac12-\re(\beta),-\frac52\right\}.
		\end{equation}
	\end{theorem}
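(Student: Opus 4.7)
I would attack this by the multiple Dirichlet series method, so that the two main terms of \eqref{Asymptotic for ratios of all characters} emerge as residues of a triple Dirichlet series rather than by an \emph{ad hoc} diagonal extraction. First expand
\[
\frac{L_{(2)}(1/2+\alpha,\chi_n)}{L_{(2)}(1/2+\beta,\chi_n)}=\sum_{m,k\odd}\frac{\mu(k)\chi_n(mk)}{m^{1/2+\alpha}k^{1/2+\beta}}
\]
(valid for $\re(\beta)$ large; the general range is reached by analytic continuation) and replace $f(n/X)$ by its Mellin integral $\frac{1}{2\pi i}\int_{(\sigma)}\M f(s)X^s n^{-s}\,ds$ with $\sigma>1$ large. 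Interchanging the order of summation, the left-hand side of \eqref{Asymptotic for ratios of all characters} becomes
\[
\frac{1}{2\pi i}\int_{(\sigma)}X^s\,\M f(s)\,Z(s,\alpha,\beta)\,ds,\qquad Z(s,\alpha,\beta):=\sum_{m,k,n\odd}\frac{\mu(k)\chi_n(mk)}{m^{1/2+\alpha}k^{1/2+\beta}n^{s}}.
\]

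The crux is the meromorphic continuation of $Z$ in the three complex variables. Quadratic reciprocity for the Jacobi symbol converts $\sum_{n\odd}\chi_n(mk)n^{-s}$ into a finite combination of classical Dirichlet $L$-functions in $s$ attached to $mk$, which inherit a functional equation $s\leftrightarrow 1-s$ and the expected continuation. Performing the $m,k$-sums then exhibits $Z$ as a meromorphic function whose poles relevant to the contour shift are: a simple pole at $s=1$ from the ``square'' diagonal $mk=\square$, whose residue factors into $\zeta_{(2)}(1+2\alpha)/\zeta_{(2)}(1+\alpha+\beta)$ times the Euler product stated in the theorem (yielding the first main term); and a reflected simple pole at $s=1-\alpha$ coming from the functional equation of $L(\cdot,\chi_n)$, whose residue produces the $X^{1-\alpha}$ term. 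The combination $\Go(1/2+\alpha)+\Ge(1/2+\alpha)$ appears in the reflected residue because the family runs over all odd $n$, so $\chi_n$ of both parities contribute, while the factors $1/(6-2^{\alpha-\beta+1})$ and $P(3/2-\alpha+\beta)$ arise from collecting Euler contributions at $p=2$ and $p>2$ after reciprocity. The further poles sit on the lines $\re(s)=1-2\re(\alpha),\,1-2\re(\beta),\,\tfrac12-\re(\alpha),\,\tfrac12-\re(\beta)$, and the triple series converges absolutely for $\re(s)$ not too much less than $-5/2$: these four lines together with the convergence abscissa account precisely for the five terms in $N(\alpha,\beta)$.

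The proof concludes by shifting the contour from $\re(s)=\sigma$ down to $\re(s)=N(\alpha,\beta)+\epsilon$, picking up only the two main residues and absorbing the rest into the error. The remaining integral is controlled by the rapid decay of $\M f$ together with polynomial bounds on $Z$ in vertical strips: under GRH, Phragm\'en--Lindel\"of applied to the finitely many Dirichlet $L$-functions in the reciprocity factorization gives growth $\ll (1+|\im(s)|)^{O(1)}(1+|\im(\alpha)|)^{\epsilon}(1+|\im(\beta)|)^{\epsilon}$, which yields the stated error term. I expect the main obstacle to be the analytic continuation step: one must unfold $Z$ in a way that makes every pole down to $\re(s)=-5/2$ visible, while retaining uniform polynomial control in $s$, $\alpha$, $\beta$. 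In particular the non-squarefree $n$, where $\chi_n$ degenerates at primes dividing $n$, must be separated explicitly, as these terms are simultaneously responsible for the Euler products in the main terms and for the precise secondary-pole structure appearing in $N(\alpha,\beta)$.
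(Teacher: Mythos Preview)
Your overall framework matches the paper's: Mellin inversion, the triple Dirichlet series
\[
A(s,w,z)=\sum_{n\odd}\frac{L_{(2)}(w,\chi_n)}{L_{(2)}(z,\chi_n)\,n^{s}},\qquad w=\tfrac12+\alpha,\ z=\tfrac12+\beta,
\]
a contour shift in $s$, and residues at $s=1$ and $s=1-\alpha$. But two points in your plan are genuinely wrong and would block the proof as written.

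\smallskip
\textbf{The meaning of $N(\alpha,\beta)$.} You say ``the further poles sit on the lines $\re(s)=1-2\re(\alpha),\,1-2\re(\beta),\,\tfrac12-\re(\alpha),\,\tfrac12-\re(\beta)$'' and that ``the triple series converges absolutely for $\re(s)$ not too much less than $-5/2$''. Neither is right. The four hyperplanes are not poles of $A$; they form the boundary of the region $S_4$ to which $A$ has been meromorphically continued. Inside that region the only poles of $A$ in $s$ are at $s=1$ and $s=3/2-w$; beyond it nothing is known, so the contour simply cannot be pushed further. The $-5/2$ is an artificial cutoff inserted into the working region $\tilde S_4$ so that the polynomial bound in vertical strips is uniform; it has nothing to do with absolute convergence. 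Treating $N(\alpha,\beta)$ as a list of secondary poles rather than the edge of the known domain would lead you to look for residues that do not exist and miss the actual obstruction.

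\smallskip
\textbf{The continuation step.} You write that quadratic reciprocity turns the $n$-sum into ``classical Dirichlet $L$-functions in $s$ attached to $mk$, which inherit a functional equation $s\leftrightarrow 1-s$''. The character in question is $\leg{4mk}{\cdot}$ modulo $4mk$, which is imprimitive for generic $m,k$, so the usual functional equation is unavailable. The paper's substitute (its Proposition~2.5) is a functional equation valid for \emph{any} character $\chi\bmod q$,
\[
L(s,\chi)=\epsilon(\chi)\,\pi^{s-1/2}q^{-s}\,\Gamma_{e/o}(s)\,K(1-s,\chi),\qquad K(u,\chi)=\sum_{q\ge1}\frac{\tau(\chi,q)}{q^{u}},
\]
with shifted Gauss sums on the right. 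Applying it produces a new triple series $C(1-s,s+w,s+z)$; analysing $C$ via Soundararajan's explicit prime-power Gauss sums extends it to a region $S_3$, and the final region $S_4$ is obtained only by taking the \emph{convex hull} of $S_2$ and $S_3$ via Bochner's tube theorem. The same convexity machinery (the paper's Proposition~C.4), not Phragm\'en--Lindel\"of on the individual $L$-functions, is what propagates the polynomial bounds in vertical strips to $\tilde S_4$ and yields the factor $(1+|\alpha|)^{\epsilon}|\beta|^{\epsilon}$ in the error. Your sketch names none of these tools; without the Gauss-sum functional equation and the convex-hull step, you have no mechanism to push the $s$-contour past $\re(s)=1/2$, let alone down to $N(\alpha,\beta)+\epsilon$.
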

	The condition $1+\re(\beta)>\re(\alpha)$ is to ensure convergence of the product $P(3/2-\alpha+\beta)$, and the $-5/2$ in the error term comes from our definition of $\tilde S_j$ in Section \ref{Section bound in vertical strips}.
	
	In this case, the error term is uniform in $\alpha,\beta$. Let us emphasize that the imaginary parts may grow as fast as any power of $X$, which is better than the original conjecture.
	
	This result also allows us to differentiate with respect to $\alpha$ and take $\alpha=\beta=r$, thus obtain
	
	\begin{theorem}\label{Theorem for log derivatives}
		Assume GRH and Let $\re(r)>\epsilon$. Then
		\begin{equation}\label{Asymptotic formula for ratios of log derivative}
			\begin{aligned}
				\sum_{\substack{n\geq1,\\n\odd}}&\frac{L'(1/2+r,\chi_n)}{L(1/2+r,\chi_n)}f(n/X)=\frac{X\M f(1)}{2}\lz\frac{\zeta'(1+2r)}{\zeta(1+2r)}+\sum_{p>2}\frac{\log p}{p(p^{1+2r}-1)}\pz\\
				&-X^{1-r}\M f(1-r)\pi^r\lz\Go\lz\frac12+r\pz+\Ge\lz\frac12+r\pz\pz\frac{\zeta(1-2r)}{4}\\
				&+O\lz1+ \lvert r\rvert ^{\epsilon}X^{N(r)+\epsilon}\pz,
			\end{aligned}
		\end{equation} where
		\begin{equation}\label{key}
			N(r)=\max\{1-2\re(r),1/2-\re(r),-5/2\}.
		\end{equation}
	\end{theorem}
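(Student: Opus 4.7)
\medskip
\noindent\textbf{Proof plan.}
The plan is to derive Theorem~\ref{Theorem for log derivatives} from Theorem~\ref{Theorem for all characters} by differentiating with respect to $\alpha$ and then setting $\alpha=\beta=r$, based on the identity
\begin{equation*}
\frac{\partial}{\partial\alpha}\left.\frac{L_{(2)}(1/2+\alpha,\chi_n)}{L_{(2)}(1/2+\beta,\chi_n)}\right|_{\alpha=\beta=r}=\frac{L_{(2)}'(1/2+r,\chi_n)}{L_{(2)}(1/2+r,\chi_n)}.
\end{equation*}
Since Theorem~\ref{Theorem for log derivatives} involves $L'/L$ rather than $L_{(2)}'/L_{(2)}$, the first step is to relate the two via
\begin{equation*}
\frac{L'(s,\chi_n)}{L(s,\chi_n)}=\frac{L_{(2)}'(s,\chi_n)}{L_{(2)}(s,\chi_n)}-\frac{\chi_n(2)\log 2\cdot 2^{-s}}{1-\chi_n(2)2^{-s}}.
\end{equation*}
Expanding the geometric series and summing over odd $n$ with weight $f(n/X)$, the odd powers of $\chi_n(2)=(2/n)$ give smooth non-principal character sums of size $O(X^{-A})$, while the even powers (for which $\chi_n(2)^k=1$) collapse to a single correction of size $-X\M f(1)\log 2/(2(2^{1+2r}-1))$; this will precisely cancel a matching term generated in the first main term below.

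For the first main term of Theorem~\ref{Theorem for all characters}, I will use logarithmic differentiation. At $\alpha=\beta$ both the zeta ratio and the Euler product evaluate to $1$, so the derivative equals the sum of the individual logarithmic derivatives. The zeta part yields $\zeta_{(2)}'(1+2r)/\zeta_{(2)}(1+2r)$, and each $p$-factor contributes $\log p/(p(p^{1+2r}-1))$: indeed the numerator $p^{\alpha-\beta}-1$ vanishes at $\alpha=\beta$, so its $\alpha$-derivative $p^{\alpha-\beta}\log p$ is simply divided by the value of the denominator at $\alpha=\beta=r$, namely $p(p^{1+2r}-1)$. Writing $\zeta_{(2)}'/\zeta_{(2)}=\zeta'/\zeta+\log 2/(2^{1+2r}-1)$ and combining with the correction from the previous paragraph produces the first main term of Theorem~\ref{Theorem for log derivatives}.

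The second main term contains the factor $1/\zeta(1-\alpha+\beta)$, which vanishes to first order at $\alpha=\beta$ because of the simple pole of $\zeta$ at $1$. Writing the term as $H(\alpha,\beta)/\zeta(1-\alpha+\beta)$ with $H$ collecting everything else, the $\alpha$-derivative at $\alpha=\beta=r$ reduces to $H(r,r)\cdot\partial_\alpha[1/\zeta(1-\alpha+\beta)]|_{\alpha=\beta=r}=-H(r,r)$, where the value $-1$ of the derivative is read off from $\zeta(s)=(s-1)^{-1}+\gamma+O(s-1)$. Two identities then collapse $H(r,r)$ to the claimed shape: the special value $P(3/2)=\prod_p(1+1/(p^2-1))=\prod_p p^2/(p^2-1)=\zeta(2)$ cancels the $\zeta(2)$ in the denominator, and $(6-2^{\alpha-\beta+1})|_{\alpha=\beta}=4$ supplies the factor of $1/4$, with the overall minus sign inherited from the derivative of $1/\zeta$.

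Finally, the derivative of the error term is bounded by Cauchy's integral formula on a circle $|w-r|=\rho$ for a small fixed $\rho$ (which is possible because the hypothesis $\re(r)>\epsilon$ keeps the whole circle inside the domain of validity of Theorem~\ref{Theorem for all characters}). On this circle $N(w,r)\le N(r)+O(\rho)$ by continuity of $N$, and the uniform error bound of Theorem~\ref{Theorem for all characters} yields the derivative estimate $\rho^{-1}(1+|r|)^\epsilon|r|^\epsilon X^{N(r)+\epsilon}$, which is absorbed into the stated $O((1+|r|)^\epsilon X^{N(r)+\epsilon})$. The main technical nuisance in the whole argument is the bookkeeping around the Euler factor at $2$ when passing between $L'/L$ and $L_{(2)}'/L_{(2)}$; the key structural observation that makes everything work cleanly is the simple zero of $1/\zeta$ at $s=1$, which forces the second main term to reduce to $-H(r,r)$ without ever needing to compute $H_\alpha$ on the diagonal.
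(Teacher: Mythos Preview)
Your proposal is correct and follows essentially the same route as the paper: differentiate Theorem~\ref{Theorem for all characters} in $\alpha$, set $\alpha=\beta=r$, compute the derivative of each main term (using $P(3/2)=\zeta(2)$ and the simple zero of $1/\zeta$ at $1$), bound the derivative of the error via Cauchy's formula on a small circle, and reconcile $L'/L$ with $L_{(2)}'/L_{(2)}$. The only cosmetic difference is that you handle the Euler factor at $2$ first while the paper does it last, and you bound the odd-power contribution by $O(X^{-A})$ via a smoothed non-principal character sum whereas the paper uses partial summation over residue classes mod $8$ to get $O(1)$; your bound is sharper but both suffice for the stated theorem.
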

	At this point, we can sieve out the non-primitive characters and obtain an asymptotic formula for the sum running over square-free integers:
	\begin{theorem}\label{Theorem for log derivatives for squarefree moduli}
		Assume GRH and let $\epsilon<\re(r)<1/4$. Then
		\begin{equation}
			\begin{aligned}
				&\sum_{\substack{n\geq1,\\n\odd}}\frac{\mu^2(n)L'(1/2+r,\chi_n)}{L(1/2+r,\chi_n)}f(n/X)\\	
				&=\frac{2X\M f(1)}{3\zeta(2)}\lz\frac{\zeta'(1+2r)}{\zeta(1+2r)}+\sum_{p>2}\frac{\log p}{(p+1)(p^{1+2r}-1)}\pz\\
				&-X^{1-r}\M f(1-r)\pi^r\lz\Go\lz\frac12+r\pz+\Ge\lz\frac12+r\pz\pz\frac{\zeta(1-2r)}{4\zeta_{(2)}(2-2r)}\\
				&+O\lz \lvert r\rvert ^\epsilon X^{1-2\re(r)+\epsilon}\pz.
			\end{aligned}
		\end{equation}
	\end{theorem}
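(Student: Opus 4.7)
The plan is to derive Theorem \ref{Theorem for log derivatives for squarefree moduli} from Theorem \ref{Theorem for log derivatives} via the Möbius identity $\mu^2(n)=\sum_{d^2\mid n}\mu(d)$, which rewrites the left-hand side as
\begin{equation*}
S_{sf}(X,r)=\sum_{d\odd}\mu(d)\sum_{m\odd}\frac{L'(1/2+r,\chi_{md^2})}{L(1/2+r,\chi_{md^2})}f(md^2/X).
\end{equation*}
The Euler product identity $L(s,\chi_{md^2})=L(s,\chi_m)\prod_{p\mid d}(1-\chi_m(p)p^{-s})$ gives the log-derivative relation
\begin{equation*}
\frac{L'(s,\chi_{md^2})}{L(s,\chi_{md^2})}=\frac{L'(s,\chi_m)}{L(s,\chi_m)}+\sum_{p\mid d}\frac{\chi_m(p)\log p\cdot p^{-s}}{1-\chi_m(p)p^{-s}},
\end{equation*}
which splits $S_{sf}=T_1+T_2$.

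For $T_1$, I apply Theorem \ref{Theorem for log derivatives} with $X$ replaced by $X/d^2$ and sum the resulting main terms over $d$. The factors $\sum_{d\odd}\mu(d)/d^2=1/\zeta_{(2)}(2)=4/(3\zeta(2))$ and $\sum_{d\odd}\mu(d)/d^{2-2r}=1/\zeta_{(2)}(2-2r)$ appear, and the dual ($X^{1-r}$) main term already matches the target statement verbatim. The principal main term from $T_1$, however, contains $\sum_{p>2}\log p/[p(p^{1+2r}-1)]$ rather than the target $\sum_{p>2}\log p/[(p+1)(p^{1+2r}-1)]$; the discrepancy will be fixed by $T_2$. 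The error from $T_1$ is $\ll|r|^\epsilon\sum_d(X/d^2)^{N(r)+\epsilon}\ll|r|^\epsilon X^{N(r)+\epsilon}=|r|^\epsilon X^{1-2\re(r)+\epsilon}$ for $\re(r)<1/4$, since then $N(r)>1/2$ makes the $d$-sum convergent.

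For $T_2$, I set $d=pd'$ with $p\nmid d'$ (so $\mu(d)=-\mu(d')$) and, on the set $(m,p)=1$, use $\chi_m(p)^2=1$ to decompose
\begin{equation*}
\frac{\chi_m(p)p^{-s}}{1-\chi_m(p)p^{-s}}=\frac{\chi_m(p)p^{-s}}{1-p^{-2s}}+\frac{p^{-2s}}{1-p^{-2s}}.
\end{equation*}
The character-free piece yields a main term: the inner $m$-sum equals $(X/(p^2d'^2))\M f(1)(p-1)/(2p)$ up to rapidly decaying error, and the resulting $d'$-sum $\sum_{d'\odd\sfr,(d',p)=1}\mu(d')/d'^2=1/[\zeta_{(2)}(2)(1-p^{-2})]$ collapses with the $p$-dependent factors to give $-\frac{2X\M f(1)}{3\zeta(2)}\sum_{p>2}\frac{\log p}{p(p+1)(p^{1+2r}-1)}$. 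Combined with the $T_1$ main term, the identity $\frac{1}{p}-\frac{1}{p(p+1)}=\frac{1}{p+1}$ produces exactly the target sum.

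The oscillatory piece (with $\chi_m(p)$) contributes only to the error. I Mellin-invert $f$ and perform the $d'$-sum inside the integral, obtaining the factor $1/[\zeta_{(2)}(2s)(1-p^{-2s})]$, while the $m$-sum becomes $L(s,\psi_p)(1-\psi_p(2)2^{-s})$ with $\psi_p(\cdot)=\leg{p}{\cdot}$ a real non-principal Dirichlet character of conductor at most $4p$. Shifting to $\re(s)=1/2$ crosses no poles ($1/\zeta_{(2)}(2s)$ has a zero at $s=1/2$, not a pole), and GRH bounds for $L(s,\psi_p)$ together with standard bounds on $1/\zeta_{(2)}(2s)$ give the bound $O(X^{1/2+\epsilon})$, which is absorbed into the $T_1$ error. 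The main obstacle is precisely this step: a pointwise character-sum bound for each $d'$ followed by summation over $d'$ produces a divergent series, so the contour-integral approach that packages the entire $d'$-sum into $1/\zeta_{(2)}(2s)$ is essential.
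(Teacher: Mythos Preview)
Your approach is essentially the paper's: the same M\"obius sieve, the same log-derivative splitting $T_1+T_2$, and your algebraic identity for $T_2$ is exactly the paper's parity-of-$k$ split of the geometric series (even $k$ gives your oscillatory piece $\chi_m(p)p^{-s}/(1-p^{-2s})$, odd $k$ gives your constant piece $p^{-2s}/(1-p^{-2s})$). The main terms combine the same way via $\tfrac{1}{p}-\tfrac{1}{p(p+1)}=\tfrac{1}{p+1}$.

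Two small points deserve attention. First, in $T_1$ you sum Theorem~\ref{Theorem for log derivatives} over all odd $d$, but the error there is $O\big(1+|r|^\epsilon(X/d^2)^{N(r)+\epsilon}\big)$; the $O(1)$ does not depend on $X/d^2$ and summing it over all $d$ diverges. The paper truncates at $d\le\sqrt{X}$ and bounds the tail $d>\sqrt{X}$ trivially via \eqref{Size of log derivative}, contributing $O(|r|^\epsilon X^{1/2})$; you should do the same. Second, your claim that a pointwise bound over $d'$ would produce a divergent series is mistaken: the paper handles the oscillatory piece by Mellin-inverting for each fixed $d$, shifting to $\re(u)=1/2+\epsilon$, and obtaining $\ll X^{1/2+\epsilon}d^{-1-2\epsilon}p^{-1-2\epsilon}$, which \emph{is} summable in $d$. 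Packaging the $d'$-sum into $1/\zeta_{(2)}(2u)$ as you do also works, but it is a stylistic alternative, not a necessity.
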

	As opposed to Theorems \ref{Theorem for all characters} and \ref{Theorem for log derivatives}, but similarly as Theorem \ref{Theorem for fundamental discriminants}, we now only have primitive characters, so we can compare the result with the prediction. This is done in Appendix B, where we show that the main terms in Theorem \ref{Theorem for log derivatives for squarefree moduli} agree with those coming from the recipe. However, in this case, the computations leading to Theorem \ref{Theorem for log derivatives for squarefree moduli} are different from those in the heuristic.
	
	\smallskip
	
	As an application of our results, we compute the one-level density in our family of quadratic Dirichlet characters. For an even Schwartz function $h(x)$ whose Fourier transform $\hat h $ is supported in $[-a,a]$ for some $a>0,$ the one-level density is defined by
	\begin{equation}\label{key}
		D(X;h)=\frac{1}{F(X)}\sum_{\substack{n\geq1\\n\odd}}\mu^2(n)f\bfrac nX\sum_{\gamma_n}h\bfrac{\gamma_n\log X}{2\pi},
	\end{equation} where $\gamma_n$ runs over the imaginary parts of the non-trivial zeros of $L(s,\chi_n)$, and
	\begin{equation}\label{key}
		F(X)=\sum_{\substack{n\geq 1,\\n\odd}}\mu^2(n)f\bfrac{n}{X}.
	\end{equation}
	By the conjectures of Katz and Sarnak \cite{KaSa1}, \cite{KaSa2}, \begin{equation}\label{key}
		D(X;h)\sim\int_{-\infty}^{\infty}h(u)\lz1-\frac{\sin(2\pi u)}{2\pi u}\pz du,
	\end{equation} where the kernel of the integral governs the distribution of eigenvalues close to 1 in a symplectic ensemble of random matrices. This has been proved for $a<1$ and for $a<2$ under GRH by Özlük and Snyder \cite{OzSn}, and increasing the support further is a notoriously difficult problem. The ratios conjecture \eqref{ratios conjecture} implies that the above asymptotic holds for arbitrarily large $a$, and it allows us to compute all lower order terms up to square-root error. In \cite{Mil1}, Miller showed unconditionally that for limited $a$, the ratios conjecture prediction agrees with reality, including lower order terms. See also \cite{Mil2}, \cite{MiMo}, \cite{GJM+}, \cite{HMM} for similar results in other families, \cite{BFK1} for related work over function fields, or \cite{DHJ}, where the ratios conjecture is applied to compute the one-level density in two families of elliptic curves. 
	
	On the other hand, Fiorilli and Miller \cite{FiMi} computed the one-level density in the family of all Dirichlet characters modulo $q$, including lower order terms beyond square-root. They discovered a term not predicted by the ratios conjecture, concluding that the conjectured error is essentially best possible.
	
	As a consequence of Theorem \ref{Theorem for log derivatives for squarefree moduli}, we compute the one-level density provided $a<2$, thus recovering the results of Özlük and Snyder, including lower order terms with a power-saving error.
	\begin{cor}\label{Corollary one-level density} Assume GRH and let $h$ be as above. Then
		\begin{equation}\label{key}
			\begin{aligned}
				&\sum_{\substack{n\geq1,\\n\odd}}\mu^2(n)f\bfrac nX\sum_{\gamma_n}h\bfrac{\gamma_n\log X}{2\pi}\\
				&=\frac{2X}{\log X}\int_{-\infty}^{\infty}h(u)\Bigg\{\frac{2\M f(1)}{3\zeta(2)}\lz\frac{\zeta'}{\zeta}\lz1+\frac{4\pi i u}{\log X}\pz+\sum_{p>2}\frac{\log p}{(p+1)(p^{1+4\pi iu/\log X}-1)}\pz\\	
				&-\frac{e^{-2\pi i u}\M f\lz1-\frac{2\pi i u}{\log X}\pz\pi^{\frac{2\pi i u}{\log X}}\lz\Go\lz\frac{1}{2}+\frac{2\pi i u}{\log X}\pz+\Ge\lz\frac{1}{2}+\frac{2\pi i u}{\log X}\pz\pz\zeta\lz1-\frac{4\pi i u}{\log X}\pz}{4\zeta_{(2)}\lz2-\frac{4\pi i u}{\log X}\pz}
				\Bigg\}du\\
				&-\frac{1}{\log X}\sum_{\substack{n\geq 1,\\n\odd}}\mu^2(n)f\bfrac nX\log\bfrac \pi n\int_{-\infty}^{\infty}h(u) du\\
				&-\frac{X\M f(1)}{3\zeta(2)\log X}\int_{-\infty}^{\infty}h(u)\lz\frac{\Go'}{\Go}\lz\frac{1}{2}-\frac{2\pi iu}{\log X}\pz+\frac{\Ge'}{\Ge}\lz\frac{1}{2}-\frac{2\pi iu}{\log X}\pz\pz du\\
				&+O\lz X^{1/2+a/4+\epsilon}\pz.
			\end{aligned}
		\end{equation}
		This implies that if $a<2$, \begin{equation}\label{key}
			D(X;h)=\int_{-\infty}^{\infty}h(u)\lz1-\frac{\sin(2\pi u)}{2\pi u}\pz du+O\bfrac 1{\log X}.
		\end{equation}
	\end{cor}
	The proof is given in Section \ref{Section one-level density}, where we also explain that any improvement of the error term in Theorem \ref{Theorem for log derivatives for squarefree moduli}, which is directly related to a meromorphic continuation of certain triple Dirichlet series, would allow us to increase the support.
	\smallskip
	
	Our strategy to prove Theorems \ref{Theorem for fundamental discriminants} and \ref{Theorem for all characters} is to rewrite the sums as integrals using Mellin inversion (a smooth version of Perron's formula), and then investigate the analytic properties of the relevant triple Dirichlet series. We show that they have two poles, whose residues give rise to the two main terms. The error then depends on how far our triple Dirichlet series can be meromorphically extended, which in turn depends on whether they satisfy certain functional equations. 
	
	A general theory of multiple Dirichlet series has been developed by Bump, Chinta, Diaconu, Friedberg, Hoffstein and others (see for example \cite {BFG}, \cite{Blo}, \cite{DGH}, \cite{GH}, or the expository papers \cite{Bum}, \cite{BFH}, \cite{CFH} for an introduction to the theory). In \cite{DGH}, the authors prove that if certain multiple Dirichlet series admit a meromorphic continuation beyond a certain point, then the moments of the Riemann zeta function and quadratic Dirichlet L-functions satisfy the asymptotics predicted by random matrix theory. 
	
	According to the heuristics developed in \cite{BFH} and \cite{CFH} that we present in Section \ref{Section overview and heuristic}, our triple Dirichlet series are expected to satisfy two functional equations. To prove these rigorously, one usually twists the L-functions in the coefficients of the triple Dirichlet series by certain weights, carefully chosen so that the equations are satisfied. For more details, see \cite{CFH} for an exposition of the general theory, or Blomer's work \cite{Blo}, where the situation is ``simple'' enough, so the author gives very explicit results and computations.
	
	We use a different method in this paper. In the case of fundamental discriminants, we only have one functional equation, which is essentially due to the fact that
	\begin{equation}\label{key}
		L_D(s,\chi)=\sumstar_{d\geq1}\frac{\chi(d)}{d^s}
	\end{equation} doesn't satisfy any relation between $L_D(s,\chi)$ and $L_D(1-s,\chi)$. This explains why we obtain a weaker result in this case.
	
	When we include the non-primitive characters, we are summing over a nicer set, so both functional equations potentially hold. However, we have to deal with the L-functions of non-primitive characters -- this is usually done by inserting the extra weights mentioned above. Instead, we introduce a functional equation that is valid for all Dirichlet characters, but where on the other side, one has a different Dirichlet series whose coefficients are twisted Gauss sums (see Proposition \ref{Functional equation with Gauss sums} and Appendix A). It then becomes straightforward to prove the functional equations for our triple Dirichlet series, but with a different triple Dirichlet series on the other side.
	
	\section{Preliminaries}
	
	Throughout the paper, $\epsilon$ will denote a small positive number, that may be different at each appearance. All implied constants can depend on $\epsilon.$
	
	For an odd positive integer $n$, $\chi_n$ denotes the quadratic Dirichlet character given by the Jacobi symbol $\leg{\cdot}{n}$. The character $\chi_n$ is primitive for square-free $n$, and it is even for $n\equiv1\mod 4$ and odd for $n\equiv3\mod 4$. We will also work with the Kronecker symbol $\leg{k}{\cdot},$ which is periodic modulo $k$ only if $k\equiv0,1\mod 4$. However, we often use the fact that if $n$ is odd, then $\leg{k}{n}=\leg{4k}{n},$ and $\leg{4k}{\cdot}$ is a Dirichlet character modulo $4k$ for any odd $k$. We also denote by $\psi_1,\psi_{-1}$ the principal and non-principal characters modulo $4$ respectively, and by $\psi_2,\psi_{-2}$ the two primitive quadratic characters modulo 8 given by the Kronecker symbols $\psi_j(n)=\leg{4j}{n},$ and by $\psi_0$ the primitive principal character, so that $\psi_0(n)=1$ for all $n\in\Z$.
	
	In sections \ref{Section overview and heuristic}, \ref{Section series for fund discr} and \ref{Section Proof of Theorem 1}, $\chi_d$ denotes the Kronecker symbol $\leg{d}{\cdot}$ for a fundamental discriminant $d$.
	
	For a Dirichlet character $\chi$ modulo $n$, we define the shifted Gauss sums \begin{equation}\label{key}
		\tau(\chi,q)=\sum_{j\mod n}\chi(j)e(jq/n), 
	\end{equation} where we use the standard notation $e(x)=e^{2\pi ix}$.
	
	If $\chi$ is a primitive character, then 
	\begin{equation}\label{Gauss sums for primitive characters}
		\tau(\chi,q)=\bar\chi(q)\tau(\chi,1).
	\end{equation}
	\begin{lemma}\label{Multiplicativity of Gauss sums}
		Let $\chi_1,\chi_2$ be two Dirichlet characters modulo $n_1$ and $n_2$, respectively, and assume that $(n_1,n_2)=1$. Then for $\chi_1\chi_2$ considered as a Dirichlet character modulo $n_1n_2,$ we have \begin{equation}\label{key}
			\tau(\chi_1\chi_2,q)=\chi_1(n_2)\chi_2(n_1)\tau(\chi_1,q)\tau(\chi_2,q).
		\end{equation}
	\end{lemma}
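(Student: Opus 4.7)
The plan is to exploit the Chinese Remainder Theorem to decompose the sum defining $\tau(\chi_1\chi_2,q)$ into sums over residues modulo $n_1$ and modulo $n_2$ separately. Since $(n_1,n_2)=1$, the map $(j_1,j_2)\mapsto j_1 n_2+j_2 n_1$ gives a bijection from $\Z/n_1\Z\times\Z/n_2\Z$ onto $\Z/n_1n_2\Z$. The first step is to reindex the Gauss sum by this bijection and then factor the resulting expression.

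Under this reindexing both the additive and the multiplicative pieces split cleanly. The exponential becomes
\begin{equation*}
e((j_1 n_2+j_2 n_1)q/(n_1 n_2))=e(j_1 q/n_1)\,e(j_2 q/n_2).
\end{equation*}
For the character side, the product $\chi_1\chi_2$ viewed as a character modulo $n_1 n_2$ is the pullback of $(\chi_1,\chi_2)$ under the CRT isomorphism, so using $j\equiv j_1 n_2\pmod{n_1}$ and $j\equiv j_2 n_1\pmod{n_2}$, one obtains
\begin{equation*}
(\chi_1\chi_2)(j)=\chi_1(j_1 n_2)\chi_2(j_2 n_1)=\chi_1(n_2)\chi_2(n_1)\chi_1(j_1)\chi_2(j_2).
\end{equation*}

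Substituting these factorizations, the scalar $\chi_1(n_2)\chi_2(n_1)$ pulls out of the sum, and the remaining double sum separates as a product of $\tau(\chi_1,q)$ and $\tau(\chi_2,q)$, yielding the claimed identity. No real obstacle is expected here: this is the standard multiplicativity argument for (shifted) Gauss sums, and the only point requiring care is the verification that $\chi_1\chi_2$ as a character modulo $n_1 n_2$ really factors via CRT as above, which follows directly from the definition of the product of characters of coprime moduli.
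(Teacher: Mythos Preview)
Your proof is correct and follows essentially the same Chinese Remainder Theorem argument as the paper. The only cosmetic difference is that you parametrize via $j=j_1 n_2+j_2 n_1$, so that the factors $\chi_1(n_2)\chi_2(n_1)$ appear directly in the character evaluation, whereas the paper uses the idempotent form $j=j_1 n_2\overline{n_2}+j_2 n_1\overline{n_1}$ and then absorbs the inverses by a change of variable in the inner sums; both routes are standard and equivalent.
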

	\begin{proof}
		By the Chinese remainder theorem, we can write any $j\in\Z/(n_1n_2Z)$ uniquely as  \begin{equation}\label{key}
			j=j_1n_2\overline {n_2}+j_2n_1\overline {n_1},
		\end{equation} where $n_1\overline{n_1}\equiv1\mod{n_2}$, $n_2\overline{n_2}\equiv1\mod {n_1},$ $j_1\in\Z/n_1\Z$, and $j_2\in\Z/n_2\Z$. Hence, we have \begin{equation}\label{key}
			\begin{aligned}
				\tau(\chi_1\chi_2,q)&=\sum_{j\mod{n_1n_2}}\chi_1(j)\chi_2(j)e\bfrac{qj}{n_1n_2}\\
				&=\sum_{j_1\mod{n_1}}\sum_{j_2\mod{n_2}}\chi_1(j_1)\chi_2(j_2)e\bfrac{qj_1\overline{n_2}}{n_1}e\bfrac{qj_2\overline{n_1}}{n_2}\\
				&%=\tau(\chi_1,q\overline{n_2})\tau(\chi_2,q\overline{n_1})
				=\chi_1(n_2)\chi_2(n_1)\tau(\chi_1,q)\tau(\chi_2,q).
			\end{aligned}
		\end{equation}
	\end{proof}
	
	\begin{lemma}\label{Lemma changing Gauss sums}
		\begin{enumerate}
			\item If $\l\equiv1\mod 4$, then \begin{equation}\label{key}
				\tau\lz\bfrac{4\l}{\cdot},q\pz=\begin{cases}
					0,&\hbox{if $q$ is odd,}\\
					-2\tau\lz\leg{\cdot}{\l},q\pz,&\hbox{if $q\equiv2\mod 4$,}\\
					2\tau\lz\leg{\cdot}{\l},q\pz,&\hbox{if $q\equiv0\mod 4$.}
				\end{cases}
			\end{equation}
			\item If $\l\equiv3\mod 4$, then
			\begin{equation}\label{key}
				\tau\lz\bfrac{4\l}{\cdot},q\pz=\begin{cases}
					0,&\hbox{if $q$ is even,}\\
					-2i\tau\lz\leg{\cdot}{\l},q\pz,&\hbox{if $q\equiv1\mod 4$,}\\
					2i\tau\lz\leg{\cdot}{\l},q\pz,&\hbox{if $q\equiv3\mod 4$.}
				\end{cases}
			\end{equation}
		\end{enumerate}
	\end{lemma}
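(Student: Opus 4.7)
The approach is to factor the Kronecker symbol $\leg{4\ell}{\cdot}$ as a product of a character modulo 4 and the Jacobi symbol $\leg{\cdot}{\ell}$, and then invoke the multiplicativity of Gauss sums from Lemma \ref{Multiplicativity of Gauss sums}. The resulting Gauss sum modulo 4 can be written out directly in each residue class of $q$ modulo 4, and both claimed identities fall out.

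For the factorization: for odd $j$ one has $\leg{4\ell}{j} = \leg{4}{j}\leg{\ell}{j} = \leg{\ell}{j}$, and for even $j$ the Kronecker symbol vanishes. Quadratic reciprocity for Jacobi symbols then gives $\leg{\ell}{j} = \leg{j}{\ell}$ when $\ell \equiv 1 \mod{4}$, and $\leg{\ell}{j} = (-1)^{(j-1)/2}\leg{j}{\ell}$ when $\ell \equiv 3 \mod{4}$, the latter sign being exactly $\leg{-4}{j}$. Hence as Dirichlet characters modulo $4\ell$ one has $\leg{4\ell}{\cdot} = \chi_4 \cdot \leg{\cdot}{\ell}$, where $\chi_4$ is the principal character modulo 4 when $\ell \equiv 1 \mod{4}$, and $\chi_4 = \leg{-4}{\cdot}$ when $\ell \equiv 3 \mod{4}$.

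Applying Lemma \ref{Multiplicativity of Gauss sums} with these two factors, and using that $\leg{4}{\ell}=1$ for odd $\ell$, gives
\[
\tau\lz\leg{4\ell}{\cdot},q\pz = \chi_4(\ell)\,\tau(\chi_4,q)\,\tau\lz\leg{\cdot}{\ell},q\pz.
\]
The Gauss sums modulo 4 are computed by hand from their two-term definitions: the principal character yields $\tau(\chi_4,q) = e(q/4)+e(3q/4)$, equal to $0,-2,0,2$ as $q \equiv 1,2,3,0 \mod{4}$; and $\chi_4 = \leg{-4}{\cdot}$ yields $\tau(\chi_4,q) = e(q/4)-e(3q/4)$, equal to $2i,0,-2i,0$ on the same residues. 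For $\ell \equiv 3 \mod{4}$ the extra scalar $\chi_4(\ell) = \leg{-4}{\ell} = -1$ converts the values $\pm 2i$ into $\mp 2i$, which is exactly the sign pattern in the statement; for $\ell \equiv 1 \mod{4}$ the scalar is $+1$ and no sign change occurs. Matching cases then produces both parts of the lemma.

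Conceptually there is no real obstacle here; the only genuine care is in bookkeeping of signs, in particular tracking the overall factor $\chi_4(\ell) = -1$ for $\ell \equiv 3 \mod{4}$, which is what distinguishes the two cases of the lemma.
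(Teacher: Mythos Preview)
Your proof is correct and follows essentially the same approach as the paper: both use quadratic reciprocity to write $\leg{4\ell}{\cdot}=\leg{\cdot}{\ell}\psi_{\pm1}$ as a character modulo $4\ell$ and then invoke Lemma~\ref{Multiplicativity of Gauss sums}. You supply the explicit computation of the modulus-$4$ Gauss sums $\tau(\psi_{\pm1},q)$ and track the scalar $\psi_{-1}(\ell)=-1$ for $\ell\equiv3\pmod4$, details the paper leaves to the reader.
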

	\begin{proof}
		Quadratic reciprocity gives for $\l\equiv1\mod 4$ \begin{equation}\label{key}
			\tau\lz\leg{4\l}{\cdot},q\pz=\tau\lz\leg{\cdot}{\l}\psi_1,q\pz,
		\end{equation} and for $\l\equiv3\mod 4$ \begin{equation}\label{key}
			\tau\lz\leg{4\l}{\cdot},q\pz=\tau\lz\leg{\cdot}{\l}\psi_{-1},q\pz.
		\end{equation} The result then follows from Lemma \ref{Multiplicativity of Gauss sums} with $n_1=\l$ and $n_2=4$.
	\end{proof}
	
	For quadratic characters, we also define $G\lz\chi_n,q\pz$ by \begin{equation}\label{key}
		\begin{aligned}
			G\lz\chi_n,q\pz&=\lz\frac{1-i}{2}+\leg{-1}{n}\frac{1+i}{2}\pz\tau\lz\chi_n,q\pz=\\
			&=\begin{cases}
				\tau\lz\chi_n,q\pz,&\hbox{if $n\equiv1\mod 4$,}\\
				-i\tau\lz\chi_n,q\pz,&\hbox{if $n\equiv3\mod 4$,}
			\end{cases}
		\end{aligned}
	\end{equation} whose advantage is that they are multiplicative in $n$, so that for $(m,n)=1$, we have\begin{equation}\label{key}
		G(\chi_{mn},q)=G(\chi_m,q)G(\chi_n,q).
	\end{equation}
	
	Moreover, if $p$ is an odd prime and $p^a\mid \mid q$, we have by \cite[Lemma 2.3]{Sou} \begin{equation}\label{Sound's Gauss sums - exact formula}
		G\lz\leg{\cdot}{p^k},q\pz=\begin{cases}\phi(p^k),&\hbox{if $k\leq a$, $k$ even,}\\
			0,&\hbox{if $k\leq a$, $k$ odd,}\\
			-p^a,&\hbox{if $k=a+1$, $k$ even,}\\
			\leg{qp^{-a}}{p}p^{a}\sqrt p,&\hbox{if $k=a+1$, $k$ odd,}\\
			0,&\hbox{if $k\geq a+2$,}\\
		\end{cases}
	\end{equation}
	
	For primitive Dirichlet characters of conductor $d$, we have
	\begin{equation}\label{Functional equation for L-functions of primitive characters}
		L(s,\chi)=\epsilon(\chi)\bfrac{\pi}{d}^{s-1/2}\Gamma_{e/o}(s)L(1-s,\bar\chi),
	\end{equation} where $\epsilon(\chi)=\frac{a\tau(\chi,1)}{\sqrt d}$, and $a=1$ and $\Gamma_{e/o}=\Gamma_e$ if $\chi$ is even, or $a=-i$ and $\Gamma_{e/o}=\Gamma_o$ if $\chi$ is odd.
	
	We now introduce a functional equation valid for all Dirichlet characters $\chi$ modulo $n$. 
	\begin{prop}\label{Functional equation with Gauss sums}
		Let $\chi$ be any character modulo $n$. Then we have \begin{equation}\label{Equation functional equation with Gauss sums}
			L(s,\chi)=\epsilon(\chi)\frac{\pi^{s-1/2}}{n^s}\Gamma_{e/o}(s) K(1-s,\chi),
		\end{equation} where \begin{equation}\label{key}
			K(s,\chi)=\sum_{q=1}^\infty\frac{\tau(\chi,q)}{q^s},
		\end{equation} \begin{equation}\label{key}
			\Gamma_{e/o}(s)=\begin{cases}
				\Gamma_{e}(s),&\hbox{if $\chi$ is even, or}\\
				\Gamma_{o}(s),&\hbox{if $\chi$ is odd,}
			\end{cases}
		\end{equation} and
		\begin{equation}\label{key}
			\epsilon(\chi)=\begin{cases}
				1,&\hbox{if $\chi$ is even, or}\\
				-i,&\hbox{if $\chi$ is odd.}
			\end{cases}
		\end{equation}
	\end{prop}
	Note that if $\chi$ is a primitive character, then we can use \eqref{Gauss sums for primitive characters} and recover \eqref{Functional equation for L-functions of primitive characters} from \eqref{Equation functional equation with Gauss sums}.
	\begin{proof}
		Follow one of the usual proofs of the functional equation that uses Poisson summation. The application of the Poisson summation leads to some Gauss sums, and for a primitive character $\chi$, one uses \eqref{Gauss sums for primitive characters} to change the Gauss sums back to characters. Skipping this last step and leaving the Gauss sums unchanged gives the proof.
		
		We include the details in Appendix A.
	\end{proof}
	
	We denote by $L_{(k)}(s,\chi)$ the L-function with Euler factors of $p\mid k$ removed, so
	\begin{equation}\label{key}
		L_{(k)}(s,\chi)=L(s,\chi)\prod_{p\mid k}\lz1-\frac{\chi(p)}{p^s}\pz
	\end{equation} 
	
	We now record some useful estimates that hold under GRH.
	First is the Lindel\"of bound: for $\re(s)\geq1/2$, \begin{equation}\label{Lindelof}
		\lvert L(s,\chi_n)\rvert \ll \lvert sn\rvert ^\epsilon.
	\end{equation}
	Next, if $n$ is squarefree so that $\chi_n$ is primitive, and for $\re(r)>\epsilon$, we also have\begin{equation}\label{Size of log derivative}
		\lab\frac{L'(1/2+r,\chi_n)}{L(1/2+r,\chi_n)}\rab\ll\log^2(\lvert r\rvert n).
	\end{equation}
	Finally, for any $n$ and $\re(s)>1/2+\epsilon$, we have
	\begin{equation}\label{1/L under GRH}
		\frac1{\lvert L(s,\chi_n)\rvert }\ll \lvert sn\rvert ^{\epsilon}.
	\end{equation}	
	
	Let \begin{equation}\label{key}
		L_D(s,\chi)=\sumstar_{d\geq 1}\frac{\chi(d)}{d^s}
	\end{equation} be the Dirichlet series where the sum runs over fundamental discriminants.
	\begin{lemma}\label{lemma - Dirichlet series of fund discr}
		For a Dirichlet character $\chi,$ we have \begin{equation}\label{key}
			L_D(s,\chi)=\lz\frac12+\frac{\chi(4)}{2\cdot4^s}+\frac{\chi(8)}{8^s}\pz\frac{L(s,\chi\psi_1)}{L(2s,\chi^2\psi_1)}+\lz\frac12-\frac{\chi(4)}{2\cdot 4^s}\pz\frac{L(s,\chi\psi_{-1})}{L(2s,\chi^2\psi_1)}.
		\end{equation}
	\end{lemma}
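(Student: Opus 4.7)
The plan is to split $L_D(s,\chi)$ according to the standard classification of positive fundamental discriminants, use the characters $\psi_1,\psi_{-1}$ mod $8$ to detect the congruence conditions mod $4$, and finally recognize the resulting Dirichlet series in terms of ratios $L(s,\cdot)/L(2s,\cdot)$ via the familiar identity for the indicator of squarefree integers.

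First I would recall that every positive fundamental discriminant $d$ falls into exactly one of three classes:
\begin{equation*}
\text{(i) } d\text{ odd squarefree, } d\equiv 1\!\!\mod 4;\quad \text{(ii) } d=4m,\ m\text{ odd squarefree, } m\equiv 3\!\!\mod 4;\quad \text{(iii) } d=8m,\ m\text{ odd squarefree.}
\end{equation*}
Using multiplicativity of $\chi$ and factoring out $\chi(4)/4^s$ and $\chi(8)/8^s$ in cases (ii) and (iii), this splits $L_D(s,\chi)$ as
\begin{equation*}
L_D(s,\chi)=\sum_{\substack{n\text{ odd sqfr}\\ n\equiv 1(4)}}\frac{\chi(n)}{n^s}+\frac{\chi(4)}{4^s}\sum_{\substack{n\text{ odd sqfr}\\ n\equiv 3(4)}}\frac{\chi(n)}{n^s}+\frac{\chi(8)}{8^s}\sum_{n\text{ odd sqfr}}\frac{\chi(n)}{n^s}.
\end{equation*}

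Next I would detect the congruence conditions using the characters mod $8$ introduced in the preliminaries. Since $\psi_1(n)=1$ for $n$ odd and $0$ for $n$ even, while $\psi_{-1}(n)=\left(\tfrac{-1}{n}\right)$ for $n$ odd, we have on all integers $n$ the identities
\begin{equation*}
\mathbbm{1}[n\text{ odd}]=\psi_1(n),\qquad \tfrac12(\psi_1+\psi_{-1})(n)=\mathbbm{1}[n\equiv 1\!\!\mod 4],\qquad \tfrac12(\psi_1-\psi_{-1})(n)=\mathbbm{1}[n\equiv 3\!\!\mod 4].
\end{equation*}
Combined with the standard Euler product calculation
\begin{equation*}
\sum_{n\geq 1}\frac{\mu^2(n)\chi(n)\psi(n)}{n^s}=\frac{L(s,\chi\psi)}{L(2s,\chi^2\psi^2)}
\end{equation*}
applied to $\psi\in\{\psi_1,\psi_{-1}\}$, and the observation that $\psi_1^2=\psi_{-1}^2=\psi_1$ (both equal the principal character on odd integers), each of the three inner sums above becomes an explicit combination of $L(s,\chi\psi_1)/L(2s,\chi^2\psi_1)$ and $L(s,\chi\psi_{-1})/L(2s,\chi^2\psi_1)$.

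Finally I would collect the coefficients. The first and third pieces contribute only to the $L(s,\chi\psi_1)/L(2s,\chi^2\psi_1)$ term (with total coefficient $\tfrac12+\chi(8)/8^s$ from case (i) being half, plus the full coefficient $\chi(8)/8^s$ from case (iii)), while case (ii) contributes $\tfrac{\chi(4)}{2\cdot 4^s}$ with a $+$ sign to the $\psi_1$-piece and with a $-$ sign to the $\psi_{-1}$-piece; case (i) contributes $\tfrac12$ to each. Summing yields exactly the stated coefficients $\tfrac12+\tfrac{\chi(4)}{2\cdot 4^s}+\tfrac{\chi(8)}{8^s}$ and $\tfrac12-\tfrac{\chi(4)}{2\cdot 4^s}$. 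There is no genuine obstacle here; the only mild subtlety is keeping the bookkeeping of cases (i)--(iii) straight and remembering that $\psi_1^2=\psi_1$ so that the denominator $L(2s,\chi^2\psi_1)$ is the same in all pieces.
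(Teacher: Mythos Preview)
Your proof is correct and follows essentially the same approach as the paper: the same three-case split of positive fundamental discriminants, the same use of $\tfrac12(\psi_1\pm\psi_{-1})$ to detect residues mod $4$, and the same Euler-product identity $\sum_n \mu^2(n)\chi\psi(n)n^{-s}=L(s,\chi\psi)/L(2s,\chi^2\psi_1)$. The only blemish is expository---your sentence ``the first and third pieces contribute only to the $\psi_1$-term'' is not quite right since case~(i) contributes $\tfrac12$ to each piece, as you yourself note two lines later---but the final coefficient collection is correct.
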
	
	\begin{proof}
		Any integer $d$ is a fundamental discriminant if either \begin{enumerate}
			\item[a)] $d\equiv1\mod 4$ and is squarefree, or
			\item[b)] $d=4m$ with $m\equiv3\mod 4$ and $m$ is squarefree, or
			\item[c)] $d=4m$ with $m\equiv2\mod 4$ and $m$ is squarefree.
		\end{enumerate}
		We will compute the contribution of each of these three sets separately.
		
		For part a), we have:\begin{equation}\label{key}
			\begin{aligned}
				\sum_{\substack{d\equiv1\mod4}}\frac{\chi(d)\mu^2(d)}{d^s}&=\frac12\sum_{d\geq1}\frac{\chi(d)\mu^2(d)(\psi_1(d)+\psi_{-1}(d))}{d^s}\\
				&=\frac12\prod_{p}\lz1+\frac{\chi(p)\psi_1(p)}{p^s}\pz+\frac12\prod_p\lz1+\frac{\chi(p)\psi_{-1}(p)}{p^s}\pz\\
				%&=\frac12\prod_p\frac{1-\frac{\chi^2(p)\psi_1^2(p)}{p^{2s}}}{1-\frac{\chi(p)\psi_1(p)}{p^s}}+\frac12\prod_p\frac{1-\frac{\chi^2(p)\psi_{-1}^2(p)}{p^{2s}}}{1-\frac{\chi(p)\psi_{-1}(p)}{p^s}}=\\
				&=\frac12\lz\frac{L(s,\chi\psi_1)}{L(2s,\chi^2\psi_1)}+\frac{L(s,\chi\psi_{-1})}{L(2s,\chi^2\psi_1)}\pz.
			\end{aligned}
		\end{equation}
		
		Part b) gives:
		
		\begin{equation}\label{key}
			\begin{aligned}
				\sum_{\substack{d=4m,\\ m\equiv3\mod 4}}\frac{\chi(d)\mu^2(m)}{d^s}&=\frac{\chi(4)}{2\cdot4^s}\sum_{m\geq1}\frac{\chi(m)\mu^2(m)(\psi_1(m)-\psi_{-1}(m))}{m^s}\\
				&= \frac{\chi(4)}{2\cdot 4^s}\lz\frac{L(s,\chi\psi_1)}{L(2s,\chi^2\psi_1)}-\frac{L(s,\chi\psi_{-1})}{L(2s,\chi^2\psi_1)}\pz.
			\end{aligned}
		\end{equation}
		
		The condition in part c) is equivalent to $d=8m$ with $m$ odd and squarefree. Hence we obtain 
		\begin{equation}\label{key}
			\begin{aligned}
				\sum_{\substack{d=8m,\\ m\odd}}\frac{\chi(d)\mu^2(m)}{d^s}&=\frac{\chi(8)}{8^s}\sum_{m\geq1}\frac{\chi(m)\psi_1(m)\mu^2(m)}{m^s}\\
				&=\frac{\chi(8)}{8^s}\frac{L(s,\chi\psi_1)}{L(2s,\chi^2\psi_1)}.
			\end{aligned}
		\end{equation}
		
		Adding the three parts gives the result.
	\end{proof}

	\medskip
	
	For a function $f(x)$, we denote by $\M f(s)$ its Mellin transform defined by 
	\begin{equation}\label{eqn: Mellin transform def}
		\M f(s)=\int_{0}^\infty f(x)x^{s-1}dx,
	\end{equation} for these $s$ where the integral converges. If $f(x)$ is smooth, $\M f(s)$ decays faster than any polynomial in vertical strips.
	
	If $\M f(s)$ is analytic for $a<\re(s)<b$, then the inverse Mellin transform is given by
	\begin{equation}\label{key}
		f(x)=\frac1{2\pi i}\int_{(c)} x^{-s}\M f(s)ds,
	\end{equation} where the integral is taken over a vertical line $\re(s)=c$ and $a<c<b$ is arbitrary.
	
	The following estimate is a consequence of Stirling's formula: for a fixed $\sigma\in\R$ and $\lvert t\rvert \geq1$, we have \begin{equation}\label{StirlingEstimate}
		\lvert \Gamma(\sigma+it)\rvert \asymp e^{-\lvert t\rvert \frac\pi2}\lvert t\rvert ^{\sigma-1/2}.
	\end{equation} 
	
	We have Legendre's duplication formula
	\begin{equation}\label{Duplication formula}
		\Gamma(s)\Gamma(s+1/2)=2^{1-2s}\sqrt\pi\Gamma(2s)
	\end{equation} and Euler's reflection formula
	\begin{equation}\label{Reflection formula}
		\Gamma(1-s)\Gamma(s)=\frac{\pi}{\sin(\pi s)}.
	\end{equation}
	We will also use the formula \begin{equation}\label{RatioOfGammaFactors}
		\frac{\Gamma\bfrac{1-s}{2}}{\Gamma\bfrac s2}=\frac{2^s\sin (\pi s/2)\Gamma(1-s)}{\sqrt\pi}.
	\end{equation}
	Recall that $\Go(s)$ and $\Ge(s)$ denote the ratios of the gamma factors that appear in the functional equation for even or odd characters, so
	\begin{equation}\label{key}
		\Ge(s)=\frac{\Gamma\bfrac{1-s}{2}}{\Gamma\bfrac s2},
	\end{equation}
	and
	\begin{equation}\label{key}
		\Go(s)=\frac{\Gamma\bfrac{2-s}{2}}{\Gamma\bfrac {s+1}2}.
	\end{equation}
	
	\begin{lemma}\label{G odd + G even}
		We have \begin{equation}\label{}
			\Go(s)+\Ge(s)=\frac{2^{s+1/2}\Gamma(1-s)\cos\lz \frac{\pi s}{2}-\frac \pi 4\pz}{\sqrt \pi}.
		\end{equation}
	\end{lemma}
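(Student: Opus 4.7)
\medskip

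\noindent\textbf{Proof plan.} The plan is to reduce both $\Ge(s)$ and $\Go(s)$ to a common form in which the only trigonometric factor is $\sin(\pi s/2)$ or $\cos(\pi s/2)$, and then use the identity $\sin x+\cos x=\sqrt{2}\cos(x-\pi/4)$ on the sum.

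The formula \eqref{RatioOfGammaFactors} already gives
\begin{equation*}
\Ge(s)=\frac{\Gamma((1-s)/2)}{\Gamma(s/2)}=\frac{2^{s}\sin(\pi s/2)\Gamma(1-s)}{\sqrt{\pi}},
\end{equation*}
so only the analogous evaluation of $\Go(s)$ remains. I would derive it by the same ingredients that underlie \eqref{RatioOfGammaFactors}, namely Legendre's duplication formula \eqref{Duplication formula} and the reflection formula \eqref{Reflection formula}. Concretely, applying \eqref{Duplication formula} with $u=(1-s)/2$ gives $\Gamma((1-s)/2)\Gamma((2-s)/2)=2^{s}\sqrt{\pi}\,\Gamma(1-s)$, and with $u=s/2$ gives $\Gamma(s/2)\Gamma((s+1)/2)=2^{1-s}\sqrt{\pi}\,\Gamma(s)$. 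Dividing these two identities and simplifying,
\begin{equation*}
\Go(s)=\frac{\Gamma((2-s)/2)}{\Gamma((s+1)/2)}=\frac{2^{2s-1}\Gamma(1-s)\Gamma(s/2)}{\Gamma(s)\Gamma((1-s)/2)}.
\end{equation*}
Eliminating $\Gamma(s/2)/\Gamma((1-s)/2)$ via \eqref{RatioOfGammaFactors} and $\Gamma(s)$ via \eqref{Reflection formula}, and using the double-angle identity $\sin(\pi s)=2\sin(\pi s/2)\cos(\pi s/2)$, the factors of $\sin(\pi s/2)$ and $\Gamma(1-s)$ in the numerator and denominator cancel in pairs and yield
\begin{equation*}
\Go(s)=\frac{2^{s}\cos(\pi s/2)\Gamma(1-s)}{\sqrt{\pi}}.
\end{equation*}

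Adding the two evaluations,
\begin{equation*}
\Ge(s)+\Go(s)=\frac{2^{s}\Gamma(1-s)}{\sqrt{\pi}}\bigl(\sin(\pi s/2)+\cos(\pi s/2)\bigr),
\end{equation*}
and invoking $\sin x+\cos x=\sqrt{2}\cos(x-\pi/4)$ with $x=\pi s/2$ produces exactly the claimed right-hand side.

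The computation is essentially a manipulation of gamma identities, so there is no real obstacle; the only mildly delicate point is the bookkeeping that turns $\Gamma((2-s)/2)/\Gamma((s+1)/2)$ into something whose trigonometric factor is $\cos(\pi s/2)$ rather than $\sin(\pi s/2)$, which is where the $\pi/4$ phase shift in the final answer comes from.
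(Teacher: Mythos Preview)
Your proof is correct. The approach differs slightly in organization from the paper's: rather than evaluating $\Ge(s)$ and $\Go(s)$ separately, the paper combines them over the common denominator $\Gamma\bigl(\tfrac{s+1}{2}\bigr)\Gamma\bigl(\tfrac{s}{2}\bigr)$, applies the reflection formula to each term in the numerator and the duplication formula to the denominator to reach $\dfrac{2^{s-1}\sqrt{\pi}}{\Gamma(s)}\bigl(\tfrac{1}{\sin(\pi s/2)}+\tfrac{1}{\cos(\pi s/2)}\bigr)$, then uses reflection once more to trade $1/\Gamma(s)$ for $\Gamma(1-s)\sin(\pi s)/\pi$, and finishes with the identity $\sin(\pi s)\bigl(\tfrac{1}{\sin(\pi s/2)}+\tfrac{1}{\cos(\pi s/2)}\bigr)=2\sqrt{2}\cos\bigl(\tfrac{\pi s}{2}-\tfrac{\pi}{4}\bigr)$. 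Your route uses the same three ingredients (duplication, reflection, double angle) but in a different order, and has the minor advantage of producing along the way the companion formula $\Go(s)=\dfrac{2^{s}\cos(\pi s/2)\Gamma(1-s)}{\sqrt{\pi}}$ to \eqref{RatioOfGammaFactors}, which makes the appearance of the phase shift $\pi/4$ completely transparent.
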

	\begin{proof}
		We have \begin{equation}\label{key}
			\Go(s)+\Ge(s)=\frac{\Gamma\lz\frac{2-s}{2}\pz\Gamma\lz\frac{s}{2}\pz+\Gamma\lz\frac{1-s}{2}\pz\Gamma\lz\frac{s+1}{2}\pz}{\Gamma\lz\frac{s+1}{2}\pz\Gamma\lz\frac{s}{2}\pz}.
		\end{equation} Using the reflection formula in the numerator and the duplication formula in the denominator, this equals
		\begin{equation}\label{key}
			\frac{2^{s-1}\sqrt\pi}{\Gamma(s)}\lz\frac{1}{\sin(\pi s/2)}+\frac{1}{\cos(\pi s/2)}\pz,
		\end{equation} and another application of the reflection formula in the denominator gives
		\begin{equation}\label{key}
			\frac{2^{s-1}\Gamma(1-s)\sin(\pi s)}{\sqrt\pi}\lz\frac{1}{\sin(\pi s/2)}+\frac{1}{\cos(\pi s/2)}\pz.
		\end{equation}
		The lemma follows after using the identity
		\begin{equation}\label{key}
			\sin(\pi s)\lz\frac{1}{\sin(\pi s/2)}+\frac{1}{\cos(\pi s/2)}\pz=2\sqrt 2\cos\lz\frac{\pi s}{2}-\frac{\pi}{4}\pz.
		\end{equation}
	\end{proof}	
	
	A key tool from multivariable complex analysis that we use is Bochner's Tube Theorem \cite{Boc}. For a set $U\subset\R^n$, we define $T(U)=U+i\R^n\subset \C^n.$ Then we have the following theorem:
	\begin{theorem}[Bochner's Tube Theorem]
		Let $U\subset\R^n$ be a connected open set and $f(z)$ be a function that is holomorphic on $T(U)$. Then $f(z)$ has a holomorphic continuation to the convex hull of $T(U)$.
	\end{theorem}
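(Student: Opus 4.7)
The plan is to prove Bochner's tube theorem by reducing the problem to extension across a single real segment, then to two complex dimensions, and finally applying a Cauchy integral over a family of analytic discs.

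First, I would reduce to the statement that for any two points $a, b \in U$, the function $f$ extends holomorphically to an open neighborhood of $T([a,b])$, where $[a,b]$ denotes the real segment from $a$ to $b$. This reduction suffices because every point of the convex hull lies on such a segment (after iteration, using Carath\'eodory's theorem if a finite decomposition is desired), and local extensions automatically match on overlaps by the identity principle applied on the connected set $T(U)$. Next, for fixed $a,b$ and an auxiliary direction $\xi \in \R^n$, I would restrict attention to the complex $2$-dimensional affine subspace spanned by $b-a$ and $\xi$ together with their imaginary translates. The intersection of $T(U)$ with this slice is a tube over a connected open subset of $\R^2$ containing $a$ and $b$, and extensions on varying slices glue by uniqueness, reducing the problem to the case $n=2$.

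For the two-dimensional case, the key technique is to cover $T(\text{conv}(U))$ by a family of analytic discs whose boundaries lie in $T(U)$. Given a target point $p$ in the extended tube, I would construct a holomorphic map $g \colon \{|\zeta|\le 1\}\to \C^2$ with $g(0)=p$ and $g(\{|\zeta|=1\})\subset T(U)$, using the invariance of $T(U)$ under imaginary translations to slide an initial disc into position. The one-variable Cauchy integral $\frac{1}{2\pi i}\oint_{|\zeta|=1}\frac{f(g(\zeta))}{\zeta}\,d\zeta$ then provides a candidate value for the extension at $p$, and letting $g$ vary smoothly in a parametric family over $p$ yields a holomorphic function of $p$ by differentiation under the integral.

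The main obstacle, and the heart of the classical argument, is showing that this value is independent of the chosen disc $g$. Single-valuedness requires interpolating between any two admissible disc choices by a continuous homotopy of discs whose boundary curves remain inside $T(U)$ throughout; the connectedness of $U$, combined with the freedom to translate in imaginary directions, is precisely what guarantees these homotopies exist. Once single-valuedness is established, holomorphicity of the extension and its agreement with $f$ on the original tube $T(U)$ both follow routinely from Cauchy's theorem in one variable and parameter integration.
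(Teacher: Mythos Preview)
The paper does not prove Bochner's Tube Theorem. It is quoted in the preliminaries as a classical tool with a citation to Bochner's original article \cite{Boc}, and in Appendix~C the sharper formulation (a tube is a domain of holomorphy if and only if it is convex) is again only stated, with H\"ormander \cite{Hor} given as a general reference. There is therefore no ``paper's own proof'' to compare your attempt against; the author treats the result as a black box.

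Your outline is a recognisable sketch of one of the standard routes to the theorem (the analytic-disc / Cauchy-integral approach, close in spirit to the argument in H\"ormander's book). Two places would need tightening in a full write-up. First, in the reduction to two complex variables, the intersection of $U$ with the real $2$-plane spanned by $b-a$ and $\xi$ need not be connected, so you must either restrict to the component containing $a$ and $b$ or choose $\xi$ more carefully; as written the sentence ``the intersection \dots is a tube over a connected open subset'' is not yet justified. Second, the single-valuedness step---which you rightly flag as the heart of the matter---is usually handled not by an abstract homotopy argument but by reducing (via a polygonal path from $a$ to $b$ in $U$) to the ``triangle'' configuration where two sides lie in $U$, and then writing down an explicit one-parameter family of discs; your description is correct in spirit but would need this to become a proof. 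As a plan, though, it is sound, and it already supplies more than the paper itself does on this point.
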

	A more general version of the theory of domains of holomorphy allows us to show that in the situation of Bochner's tube theorem, some of the properties of $f(z)$ also hold for its holomorphic continuation. An example that we include in Appendix C is Theorem \ref{Extending inequalities}, which we use to bound the continuation of some triple Dirichlet series in vertical strips. See also sections 4.3 and 4.4 of \cite{DGH}.
	
	\section{Overview of the proofs}\label{Section overview and heuristic}
	For simplicity, let us now assume our sums run over all positive integers. Our goal is to find an asymptotic formula for \begin{equation}\label{key}
		\sum_{n\leq X}\frac{L(1/2+\alpha,\chi_n)}{L(1/2+\beta,\chi_n)}.
	\end{equation} We insert a smooth weight into the sum and use Mellin inversion to rewrite it as an integral. Thus we obtain
	\begin{equation}\label{key}
		\sum_{n\geq1}\frac{L(1/2+\alpha,\chi_n)}{L(1/2+\beta,\chi_n)}f(n/X)=\frac{1}{2\pi i}\int_{(c)}A^*(s,1/2+\alpha,1/2+\beta)X^s\M f(s)ds,
	\end{equation} where $A^*(s,w,z)$ is the triple Dirichlet series
	\begin{equation}\label{eqn: MDS heuristic}
		A^*(s,w,z)=\sum_{n\geq1}\frac{L(w,\chi_n)}{L(z,\chi_n)n^s}=\sum_{m,n,k\geq1}\frac{\mu(k)\chi_n(k)\chi_n(m)}{n^sm^wk^z}.
	\end{equation} To be able to evaluate the integral, we need to investigate the analytic properties of $A^*(s,w,z)$. Assuming GRH and using \eqref{Lindelof} and \eqref{1/L under GRH}, the middle series of \eqref{eqn: MDS heuristic} series is absolutely convergent (up to a simple pole at $w=1$) in the region
	\begin{equation}\label{key}
		\{(s,w,z)\in\C^3:\re(s)>1,\ \re(s+w)>3/2,\ \re(z)>1/2\},
	\end{equation} where the second condition comes after using the functional equation for the L-function in the numerator if $\re(w)<1/2$ (neglecting for now that it only works for primitive characters). We remark that we get the same region without assuming GRH, as we only need the Lindelöf bound on average (see for example \cite{Blo} for details), which is provided by moments estimates.
	
	Comparing the integral with the prediction of the ratios conjecture, we may expect that $A^*(s,w,z)$ has a meromorphic continuation to $\re(s)>1/2$, with poles at $s=1$ and $s=1-\alpha=3/2-w.$
	
	We now present a heuristic of Bump, Friedberg and Hoffstein (see \cite{BFH}) suggesting that $A^*(s,w,z)$ satisfies some functional equations that can be used to obtain a meromorphic continuation, and that it has the predicted poles. In these heuristics, we assume that all characters are primitive, and that the quadratic reciprocity holds in the form $\chi_n(m)\approx \chi_m(n)$. We also don't write the gamma factors in the functional equations, so they have the form
	\begin{equation}\label{key}
		L(s,\chi_n)\approx n^{1/2-s}L(1-s,\chi_n).
	\end{equation}
	Using this functional equation in $w$, we obtain the first heuristic functional equation for $A^*(s,w,z)$:
	\begin{equation}\label{heuristic functional equation in w}
		A^*(s,w,z)\approx\sum_n\frac{L(w,\chi_n)}{L(z,\chi_n)n^s}\approx\sum_n\frac{L(1-w,\chi_n)}{L(z,\chi_n)n^{s+w-\frac12}}\approx A^*(s+w-1/2,1-w,z).
	\end{equation}
	On the other hand, we can expand the $L-$functions into Dirichlet series, first sum over the $n$ variable, and use the functional equation in $s$. Thus we obtain the second heuristic functional equation:
	\begin{equation}\label{heuristic functional equation in s}
		\begin{aligned}
			A^*(s,w,z)&\approx\sum_{m,n,k}\frac{\mu(k)\chi_n(k)\chi_n(m)}{n^sm^wk^z}\approx\sum_{m,k}\frac{\mu(k)L(s,\chi_{mk})}{m^wk^z}\\
			&\approx\sum_{m,k}\frac{\mu(k)L(1-s,\chi_{mk})}{m^{s+w-\frac12}k^{s+z-\frac12}}\approx A^*\lz1-s,s+w-\frac12,s+z-\frac12\pz.
		\end{aligned}
	\end{equation} We also see from this computation that there is a pole at $s=1$ coming from the terms with $mk=\square$, and \eqref{heuristic functional equation in w} then gives the pole at $s=3/2-w.$ We can also use \eqref{heuristic functional equation in w} to see a pole at $w=1$, which becomes a pole at $s=3/2-w$ after \eqref{heuristic functional equation in s}.
	
	An important aspect of our result is the admissible range of the parameters $\alpha$ and $\beta$, and the error term. This depends on the region of meromorphic continuation of $A^*(s,w,z),$ which we obtain from the functional equations and a careful application of Bochner's Tube Theorem.
	
	\medskip
	
	Let us now be more precise. For Theorem \ref{Theorem for fundamental discriminants}, we rewrite $R_D(X,\alpha,\beta;f)$ as an integral as
	\begin{equation}\label{R_D as integral}
		R_D(X,\alpha,\beta;f)=\frac1{2\pi i}\int_{(2)}A_D\lz s,\frac12+\alpha,\frac12+\beta\pz X^s\M f(s)ds,
	\end{equation} where $A_D(s,w,z)$ is the triple Dirichlet series
	\begin{equation}\label{key}
		\begin{aligned}
			A_D(s,w,z&)=\sumstar_{d\geq 1}\frac{L(w,\chi_d)}{L(z,\chi_d)d^s}=\sumstar_{d\geq1}\sum_{m,k\geq1}\frac{\mu(k)\chi_d(k)\chi_d(m)}{k^zm^wd^s}\\
			&=\sum_{m,k\geq1}\frac{\mu(k)L_D\lz s,\leg{\cdot}{mk}\pz}{m^wk^z}.
		\end{aligned}
	\end{equation}
	Since all characters are even and primitive, it is straightforward to get the functional equation in $w$. However, after exchanging summations, we obtain $L_D(s,\chi)$ instead of $L(s,\chi)$ in the heuristic in \eqref{heuristic functional equation in s}, so we don't have the functional equation in $s$. This explains the weaker results for this family. The details are written in Section \ref{Section series for fund discr}.
	
	For Theorem \ref{Theorem for all characters}, we similarly obtain
	\begin{equation}\label{Integral for all characters}
		\sum_{\substack{n\geq1,\\n\odd}}\frac{L_{(2)}(1/2+\alpha,\chi_n)}{L_{(2)}(1/2+\beta,\chi_n)}f\bfrac nX=\frac1{2\pi i}\int_{(2)}A\lz s,\frac12+\alpha,\frac12+\beta\pz X^s\M f(s)ds,
	\end{equation} where
	\begin{equation}\label{key}
		\begin{aligned}
			A(s,w,z)&=\sum_{\substack{n\geq1,\\n\odd}}\frac{L_{(2)}(w,\chi_n)}{L_{(2)}(z,\chi_n)n^s}=\sum_{\substack{k,m,n\geq1,\\k,m,n\odd}}\frac{\mu(k)\chi_n(k)\chi_n(m)}{k^zm^wn^s}\\
			&=\sum_{\substack{m,k\geq1,\\m,k\odd}}\frac{L\lz s,\leg{4mk}{\cdot}\pz}{m^wk^z}.
		\end{aligned}
	\end{equation} In this case, we can potentially obtain both functional equations, but we have to deal with the presence of non-primitive characters. A key tool here is the functional equation in Proposition \ref{Functional equation with Gauss sums}, which is valid for all Dirichlet characters. After applying it, we obtain a relation between $A$ and some different triple Dirichlet series, whose coefficients involve Gauss sums. We elaborate on this in Section \ref{Section series for all characters}.
	
	\section{The triple Dirichlet series for fundamental discriminants}\label{Section series for fund discr}
	As in \eqref{R_D as integral} we can write $R_D(X,\alpha,\beta;f)$ as
	\begin{equation}\label{key}
		R_D(X,\alpha,\beta;f)=\frac{1}{2\pi i}\int_{(c)}A_D(s,w,z)X^{s}\M f(s)ds,
	\end{equation} where 
	\begin{equation}\label{eqn: A_D definition}
		A_D(s,w,z)=\sumstar_{d\geq1}\frac{L(w,\chi_d)}{L(z,\chi_d)d^s},
	\end{equation} and $w=1/2+\alpha$, $z=1/2+\beta$.
	In this section, we investigate the analytic properties of the triple Dirichlet series $A_D(s,w,z)$ that allow us to shift the integral and compute the main terms coming from the two poles of $A_D(s,w,z)$.

	\subsection{Region of absolute convergence}
	Using \eqref{Lindelof}, \eqref{1/L under GRH} and the functional equation for $L(w,\chi_d)$ if $\re(w)<1/2$, up to a simple pole at $w=1$, the series \eqref{eqn: A_D definition} converges absolutely in the region
	\begin{equation}\label{key}
		R_0=\left\{(s,w,z):\ \re(s)>1,\ \re(s+w)>\frac32,\ \re(z)>\frac12\right\},
	\end{equation} and is polynomially bounded in vertical strips in this region, i.e., away from the possible poles, we have
	\begin{equation}\label{key}
		\lvert A_D(s,w,z)\rvert \ll_{\re(s),\re(w),\re(z)}((1+\lvert s\rvert )(1+\lvert w\rvert )(1+\lvert z\rvert ))^c
	\end{equation} for some constant $c$.
	By exchanging summations and using Lemma \ref{lemma - Dirichlet series of fund discr}, we have
	\begin{equation}\label{A_D(s,w,z) exchanged summation}
		\begin{aligned}
			A_D(s,w,z)&=\sum_{m,k\geq1}\sumstar_{d\geq1}\frac{\mu(k)\chi_d(k)\chi_d(m)}{d^sm^wk^z}=\sum_{m,k\geq 1}\frac{\mu(k)L_D\lz s,\leg{\cdot}{mk}\pz}{m^wk^z}\\
			&=\sum_{m,k\geq1}\lz\frac12+\frac{\leg{4}{mk}}{2\cdot 4^s}+\frac{\leg{8}{mk}}{8^s}\pz\frac{\mu(k)L\lz s,\leg{\cdot}{mk}\psi_1\pz}{m^wk^zL\lz 2s,\leg{\cdot}{mk}^2\psi_1\pz}\\
			&+\sum_{m,k\geq1}\lz\frac12-\frac{\leg{4}{mk}}{2\cdot 4^s}\pz\frac{\mu(k)L\lz s,\leg{\cdot}{mk}\psi_{-1}\pz}{m^wk^zL\lz2s,\leg{\cdot}{mk}^2\psi_1\pz}.
		\end{aligned}
	\end{equation}
	This expression converges absolutely in the region \begin{equation}\label{key}
		\begin{aligned}
			R_1=\Big\{(s,w,z):\ \re(s)&>\frac14,\ \re(w)>1,\ \re(z)>1,\\&\re(s+w)>\frac32,\ \re(s+z)>\frac32\Big\},
		\end{aligned}
	\end{equation} except there is a pole at $s=1$ coming from the terms in the first sum when $mk=\square$. Moreover, $(s-1)A_D(s,w,z)$ is polynomially bounded in vertical strips in $R_1$.
	Bochner's tube theorem allows us to meromorphically continue $A_D(s,w,z)$ to the convex hull of $R_0$ and $R_1$, which is
	\begin{equation}\label{key}
		R_2=\left\{(s,w,z):\ \re(s)>\frac14,\ \re(z)>\frac12,\ \re(s+w)>\frac32,\ \re(s+z)>\frac32\right\},
	\end{equation} and by Proposition \ref{Extending inequalities}, $(s-1)A_D(s,w,z)$ is polynomially bounded in vertical strips in $R_2$.

	\subsection{Residue at $s=1$}
	We see from expression \eqref{A_D(s,w,z) exchanged summation} that $A_D(s,w,z)$ has a pole at $s=1$ coming from the terms with $mk=\square$. In this case, we have
	\begin{equation}\label{key}
		\frac{L\lz s,\leg{\cdot}{mk}\psi_1\pz}{L\lz 2s,\leg{\cdot}{mk}^2\psi_1\pz}%=\frac{\zeta(s)}{\zeta(2s)}\prod_{p\mid 2mk}\frac{1-p^{-s}}{1-p^{-2s}}
		=\frac{\zeta(s)}{\zeta(2s)}\prod_{p\mid 2mk}\frac{p^s}{p^s+1}.
	\end{equation}
	If $mk$ is an odd square, then $mk\equiv1\mod 8$, so
	\begin{equation}\label{key}
		\lz\frac12+\frac{\leg{4}{mk}}{8}+\frac{\leg{8}{mk}}{8}\pz\prod_{p\mid 2mk}\frac{p}{p+1}=\frac34\cdot\frac23\prod_{p\mid mk}\frac{p}{p+1}=\frac12\prod_{p\mid mk}\frac{p}{p+1},
	\end{equation} and the same holds if $mk$ is an even square. Therefore, we have
	\begin{equation}\label{key}
		\res_{s=1}A_D(s,w,z)=\frac{1}{2\zeta(2)}\sum_{mk=\square}\frac{\mu(k)}{m^wk^z}\prod_{p\mid mk}\frac{p}{p+1}.
	\end{equation} This is the same expression as in the heuristic computation (2.21) in \cite{CoSn}, and can be written as
	\begin{equation}\label{key}
		\res_{s=1}A_D(s,w,z)=\frac{\zeta(2w)}{2\zeta(2)\zeta(z+w)} P_D(z,w),
	\end{equation} where
	\begin{equation}\label{key}
		\begin{aligned}
			P_D(z,w)&=\prod_{p}\lz1-\frac1{p^{z+w}}\pz^{-1}\lz1-\frac{1}{(p+1)p^{2w}}-\frac{p}{(p+1)p^{z+w}}\pz\\
			&=\prod_{p}\lz1+\frac{1-p^{z-w}}{\lz p^{z+w}-1\pz(p+1)}\pz.
		\end{aligned}
	\end{equation}

	\subsection{Functional equation and meromorphic continuation}
	We now use the functional equation of $L(w,\chi_d)$ to obtain a functional equation for $A_D(s,w,z)$, which gives us a meromorphic continuation beyond the region $R_2$. All of the characters $\chi_d$ are even and primitive of conductor $d$, so \eqref{Functional equation for L-functions of primitive characters} gives
	\begin{equation}\label{functional equationf for A_D(s,w,z)}
		\begin{aligned}
			A_D(s,w,z)&=\sumstar_{d\geq1}\frac{L(w,\chi_d)}{L(z,\chi_d)d^s}=\pi^{w-1/2}\Ge(w)\sumstar_{d\geq1}\frac{L(1-w,\chi_d)}{L(z,\chi_d)d^{s+w-1/2}}\\
			&=\pi^{w-1/2}\Ge(w)A_D(s+w-1/2,1-w,z).
		\end{aligned}
	\end{equation}
	This functional equation provides a meromorphic continuation to the region
	\begin{equation}\label{key}
		R_3=\big\{(s,w,z):\ \re(s+w)>\frac34,\ \re(z)>\frac12,\ \re(s)>1,\ \re(s+w+z)>2\big\},
	\end{equation}  and gives rise to a new pole of $A_D(s,w,z)$ at $s=3/2-w$ with residue
	\begin{equation}\label{key}
		\res_{s=3/2-w}A_D(s,w,z)=\pi^{w-1/2}\Ge(w)\frac{\zeta(2-2w)}{2\zeta(2)\zeta(1+z-w)}P_D(z,1-w).
	\end{equation}
	Bochner's tube theorem allows us to meromorphically continue $A_D(s,w,z)$ to the convex hull of $R_2$ and $R_3$, which is the region
	\begin{equation}\label{Final region for A_D(s,w,z)}
		\begin{aligned}
			R_4=\Bigg\{(s,w,z)&:\ \re(s)>\frac14,\ \re(z)>\frac12,\ \re(s+w)>\frac34,\ \re(2s+w)>\frac74,\\
			&\re(s+z)>\frac32,\ \re(2s+w+z)>3,\ \re(s+w+z)>2 
			\Bigg\}.
		\end{aligned}
	\end{equation}
	Moreover, $(s-1)(w-1)(s+w-3/2)A_D(s,w,z)$ is polynomially bounded in vertical strips in the region $R_3$, and by Proposition \ref{Extending inequalities} also in the region $R_4$.
	
	\section{Proof of Theorem \ref{Theorem for fundamental discriminants}}\label{Section Proof of Theorem 1}
	In this section, we prove Theorem \ref{Theorem for fundamental discriminants}. Assume that $-1/2<\re(\alpha)<1/2$, $0<\re(\beta)<1/2,$ and $\re(\beta)>\lvert \re(\alpha)\rvert $.
	We have
	\begin{equation}\label{key}
		R_D(X,\alpha,\beta;f)=\frac{1}{2\pi i}\int_{(2)}A_D(s,w,z) X^s\M f(s)ds,
	\end{equation} where $w=1/2+\alpha$, $z=1/2+\beta$.
	Let \begin{equation}\label{key}
		\begin{aligned}
			&M(\alpha,\beta)=1-\frac{\re(\alpha)}{2}-\frac{\re(\beta)}{2}.
		\end{aligned}
	\end{equation} By \eqref{Final region for A_D(s,w,z)}, we can shift the integral to
	$\re(s)=M(\alpha,\beta)+\epsilon,$
	and our assumptions about $\re(\alpha)$, $\re(\beta)$ ensure that we cross the poles at $s=1$, $s=1-\alpha$, and that the Euler products $P_D\lz\frac12+\beta,\frac12\pm\alpha\pz$ converge absolutely.
	\begin{equation}\label{key}
		\begin{aligned}
			R_D(X,\alpha,\beta;f)&=X\M f(1)\res_{s=1}A_D(s,1/2+\alpha,1/2+\beta)\\
			&+X^{1-\alpha}\M f(1-\alpha)\res_{s=1-\alpha}A_D(s,1/2+\alpha,1/2+\beta)\\
			&+\frac1{2\pi i}\int_{(M(\alpha,\beta)+\epsilon)}A_D(s,1/2+\alpha,1/2+\beta)\M f(s)X^sds.
		\end{aligned}
	\end{equation}
	Since $A_D(s,w,z)$ is polynomially bounded in vertical strips, we can bound the last integral and obtain
	\begin{equation}
		\begin{aligned}
			R_D(X,\alpha,\beta;f)&=\frac{X\M f(1)\zeta(1+2\alpha)}{2\zeta(2)\zeta(1+\alpha+\beta)}P_D(1/2+\beta,1/2+\alpha)\\
			&+\frac{X^{1-\alpha}\M f(1-\alpha)\zeta(1-2\alpha)\pi^\alpha\Ge(1/2+\alpha)}{2\zeta(2)\zeta(1-\alpha+\beta)}P_D(1/2+\beta,1/2-\alpha)\\
			&+O_{\alpha,\beta}(X^{M(\alpha,\beta)+\epsilon}).
		\end{aligned}
	\end{equation}
	
	\section{The triple Dirichlet series for all characters}\label{Section series for all characters}
	
	We now proceed with proving the results for the family with non-primitive characters. In view of \eqref{Integral for all characters}, we begin by studying the properties of the triple Dirichlet series $A(s,w,z)$.
	
	\subsection{Region of absolute convergence}
	Writing $n=n_0n_1^2$ with $n_0$ squarefree, we have
	\begin{equation}\label{key}
		\begin{aligned}
			A(s,w,z)&=\sum_{\substack{n\geq1,\\n\odd}}\frac{L_{(2)}(w,\chi_n)}{L_{(2)}(z,\chi_n)n^s}=\sum_{n_0,n_1\odd}\frac{\mu^2(n_0)L_{(2)}(w,\chi_{n_0n_1^2})}{L_{(2)}(z,\chi_{n_0n_1^2})n_0^sn_1^{2s}}\\
			&=\sum_{n_1\odd}\frac{1}{n_1^{2s}}\sum_{n_0\odd}\frac{\mu^2(n_0)L_{(2)}(s,\chi_{n_0})}{L_{(2)}(z,\chi_{n_0n_1^2})n_0^s}\prod_{p|n_1}\lz1-\frac{\chi_{n_0}(p)}{p^w}\pz.
		\end{aligned}
	\end{equation} 
	Using
	\begin{equation}\label{key}
		\prod_{p|n_1}\lz1-\frac{\chi_{n_0}(p)}{p^w}\pz\ll n_1^{\epsilon+\max\{0,-\re(w)\}},
	\end{equation} the above sum can be bounded by
	\begin{equation}\label{key}
		\sum_{n_1\odd}\frac{n_1^{\epsilon+\max\{0,-\re(w)\}}}{n_1^{2\re(s)}}\sum_{n_0\odd}\frac{\mu^2(n_0)|L(w,\chi_{n_0})|}{|L(z,\chi_{n_0n_1^2})|n_0^{\re(s)}}
	\end{equation}
	By \eqref{Lindelof} and $\eqref{1/L under GRH}$, the double sum is absolutely convergent in the region
	\begin{equation}\label{eqn: A initial partial region 1}
		\{(s,w,z):\hbox{$\re(s)>1,\ \re(w)\geq 1/2,\ \re(z)>1/2$}\}.
	\end{equation} If $\re(w)<1/2$, we may use the functional equation for $L(w,\chi_{n_0})$, obtaining the region of absolute convergence 
	\begin{equation}\label{eqn: A initial partial region 2}
		\{(s,w,z):\hbox{$\re(w)<1/2,\ \re(s+w)>3/2,\ \re(z)>1/2$}\}
	\end{equation} (note that in the above region, we have $\re(s)>1/2$, $\re(2s+w)>1$).
	
	Combining \eqref{eqn: A initial partial region 1} and \eqref{eqn: A initial partial region 2}, we see that $A(s,w,z)$ can be initially defined in the region
	\begin{equation}\label{key}
		S_0=\{(s,w,z):\hbox{$\re(s)>1,\ \re(s+w)>3/2,\ \re(z)>1/2$}\}.
	\end{equation}
	
	We also have
	\begin{equation}\label{Sum A(s,w,z) over n}
		A(s,w,z)=\sum_{\substack{m,n,k\geq1,\\m,n,k\odd}}\frac{\chi_n(m)\chi_n(k)\mu(k)}{n^sm^wk^z}=\sum_{\substack{m,k\geq1,\\m,k\odd}}\frac{\mu(k)L\lz s,\leg{4mk}{\cdot}\pz}{m^wk^z},
	\end{equation}
	which is similarly defined in the region 
	\begin{equation}\label{key}
		S_1=\{(s,w,z):\hbox{$\re(w)>1,\ \re(z)>1,\ \re(s+w)>\frac32,\ \re(s+z)>\frac32$}\},
	\end{equation} except the pole at $s=1$ coming from the summands with $mk=\square$.
	The convex hull of $S_0$ and $S_1$ is 
	\begin{equation}\label{Region of convergence of A(s,w,z)}
		S_2=\{(s,w,z):\re(z)>1/2,\ \re(s+w)>3/2,\ \re(s+z)>3/2\}.
	\end{equation}
	\subsection{Pole and residue at $s=1$}
	
	We see from \eqref{Sum A(s,w,z) over n} that $A(s,w,z)$ has a pole at $s=1$ coming from the summands where $mk=\square.$ In this case, we have
	\begin{equation}
		L\lz s,\leg{4mk}{\cdot}\pz=\zeta(s)\prod_{p\mid 4mk}\lz1-\frac1{p^s}\pz,
	\end{equation} so denoting by $a(n)$ the multiplicative function with $a(p^k)=1-\frac 1p$, we have
	\begin{equation}\label{key}
		\res_{s=1}A(s,w,z)=\sum_{\substack{mk=\square,\\m,k\odd}}\frac{\mu(k)a(4mk)}{m^wk^z}=\frac12\sum_{\substack{mk=\square,\\m,k\odd}}\frac{\mu(k)a(mk)}{m^wk^z}.
	\end{equation}
	We can write the last sum as an Euler product, slightly abusing notation by writing $p^k$ for the prime factors of $k$, and similarly for $m$. We thus obtain 
	\begin{equation}
		\begin{aligned}
			&\frac12\prod_{p>2}\sum_{\substack{m,k\geq0,\\m+k\even}}\frac{\mu(p^k)a(p^{m+k})}{p^{mw+kz}}\\
			&=\frac12\prod_{p>2}\lz\sum_{\substack{m\geq0,\\m\even}}\frac{a(p^m)}{p^{mw}}-\sum_{\substack{m\geq0,\\m\odd}}\frac{a(p^{m+1})}{p^{z+mw}}\pz\\
			&=\frac12\prod_{p>2}\lz1+\lz1-\frac1p\pz\cdot\frac{p^{-2w}}{1-p^{-2w}}-\lz1-\frac1p\pz\frac{p^{-z-w}}{1-p^{-2w}}\pz\\
			%&=\frac{\zeta_{(2)}(2w)}{2}\prod_{p>2}\lz1+\lz1-\frac1p\pz\cdot\frac{p^{-2w}}{1-p^{-2w}}-\lz1-\frac1p\pz\frac{p^{-z-w}}{1-p^{-2w}}\pz\lz1-p^{-2w}\pz=\\
			%&=\frac{\zeta_{(2)}(2w)}{2}\prod_{p>2}\lz1-p^{-2w}+\lz1-\frac1p\pz p^{-2w}-\lz1-\frac1p\pz p^{-z-w}\pz=\\
			%&=\frac{\zeta_{(2)}(2w)}{2}\prod_{p>2}\lz1-\frac1{p^{2w+1}}-\frac{p-1}{p^{1+z+w}}\pz=\\
			%&=\frac{\zeta_{(2)}(2w)}{2\zeta_{(2)}(z+w)}\prod_{p>2}\lz1-\frac1{p^{2w+1}}-\frac{p-1}{p^{1+z+w}}\pz\lz1-\frac1{p^{z+w}}\pz^{-1}=\\
			%&=\frac{\zeta_{(2)}(2w)}{2\zeta_{(2)}(z+w)}\prod_{p>2}\lz1-\frac1{p^{2w+1}\lz1-p^{-z-w}\pz}+\frac{\frac1{p^{z+w}}-\frac{p-1}{p^{z+w+1}}}{1-p^{-z-w}}\pz=\\
			&=\frac{\zeta_{(2)}(2w)}{2\zeta_{(2)}(z+w)}\prod_{p>2}\lz1-\frac{1}{p^{1+w-z}(p^{z+w}-1)}+\frac{1}{p(p^{z+w}-1)}\pz\\
			&=\frac{\zeta_{(2)}(2w)}{2\zeta_{(2)}(z+w)}\prod_{p>2}\lz1+\frac{p^{w-z}-1}{p^{1+w-z}(p^{z+w}-1)}\pz.
		\end{aligned}
	\end{equation}
	Setting $w=1/2+\alpha$, $z=1/2+\beta$ gives
	\begin{equation}\label{Residue at s=1}
		\begin{aligned}
			\res_{s=1}&A(s,1/2+\alpha,1/2+\beta)\\
			&=\frac{\zeta_{(2)}(1+2\alpha)}{2\zeta_{(2)}(1+\alpha+\beta)}\prod_{p>2}\lz1+\frac{p^{\alpha-\beta}-1}{p^{1+\alpha-\beta}(p^{1+\alpha+\beta}-1)}\pz.
		\end{aligned}
	\end{equation}
	
	\subsection{Functional equation in $s$}
	To get a functional equation for $A(s,w,z)$, we use expression \eqref{Sum A(s,w,z) over n} together with the functional equation from Proposition \ref{Functional equation with Gauss sums}. Since $\leg{4mk}{\cdot}$ is an even Dirichlet character modulo $4mk$  for any $m,k\geq1$, we obtain 
	\begin{equation}\label{Functional equation in s}
		\begin{aligned}
			A(s,w,z)&=\sum_{\substack{m,k\geq1,\\m,k\odd}}\frac{\mu(k)L\lz s,\leg{4mk}{\cdot}\pz}{m^wk^z}\\
			&=\frac{\pi^{s-1/2}\Gamma\bfrac{1-s}{2}}{4^s\Gamma\bfrac{s}{2}}\sum_{\substack{m,k\geq1,\\m,k\odd}}\frac{\mu(k)K\lz1-s,\leg{4mk}{\cdot}\pz}{m^{s+w}k^{s+z}}\\
			&= \frac{\pi^{s-1/2}\Gamma\bfrac{1-s}{2}}{4^s\Gamma\bfrac{s}{2}} C(1-s,s+w,s+z),
		\end{aligned}
	\end{equation}
	where $C(s,w,z)$ is the triple Dirichlet series
	\begin{equation}\label{key}
		C(s,w,z)=\sum_{\substack{m,k\geq1,\\m,k\odd}}\frac{\mu(k)K\lz s,\leg{4mk}{\cdot}\pz}{m^wk^z}=\sum_{\substack{m,k,q\geq 1,\\m,k\odd}}\frac{\mu(k)\tau\lz\leg{4mk}{\cdot},q\pz}{q^sm^wk^z}
	\end{equation}
	
	\subsection{Region of convergence of $C(s,w,z)$}
	
	By \eqref{Region of convergence of A(s,w,z)} and the functional equation \eqref{Functional equation in s}, $C(s,w,z)$ is initially defined in the region \begin{equation}\label{key}
		P=\{(s,w,z):\ \re(s+z)>3/2,\ \re(w)>3/2,\ \re(z)>3/2\}.
	\end{equation}
	To extend this region, we exchange the summations in $C(s,w,z)$ and get \begin{equation}\label{Exchanging sum in C}
		C(s,w,z)=\sum_{\substack{m,k,q\geq 1,\\m,k\odd}}\frac{\mu(k)\tau\lz\leg{4mk}{\cdot},q\pz}{q^sm^wk^z}=\sum_{q=1}^\infty\frac{1}{q^s}\sum_{\substack{\l\geq1,\\\l\odd}}\frac{\tau\lz\leg{4\l}{\cdot},q\pz}{\l^w}\sum_{k\mid \l}\frac{\mu(k)}{k^{z-w}},
	\end{equation} where we also substituted $mk=\l$. Our goal now is to study the properties of the inner Dirichlet series, that is, the sum over $\l.$
	
	We denote by $a_t(\l)$ the multiplicative function with $a_t(p^k)=1-\frac1{p^t}$ and use Lemma \ref{Lemma changing Gauss sums}, so we can rewrite \eqref{Exchanging sum in C} as 
	\begin{equation}\label{C(s,w,z) simplified}
		\begin{aligned}
			C&(s,w,z)=\sum_{\substack{\l\equiv1\mod 4,\\ q\equiv 2\mod 4}}\frac{-2\tau\lz\leg{\cdot}{\l},q\pz a_{z-w}(\l)}{\l^wq^s}+\sum_{\substack{\l\equiv1\mod 4,\\ q\equiv 0\mod 4}}\frac{2\tau\lz\leg{\cdot}{\l},q\pz a_{z-w}(\l)}{\l^wq^s}\\
			&+\sum_{\substack{\l\equiv3\mod 4,\\ q\equiv 1\mod 4}}\frac{-2i\tau\lz\leg{\cdot}{\l},q\pz a_{z-w}(\l)}{\l^wq^s}
			+\sum_{\substack{\l\equiv3\mod 4,\\ q\equiv 3\mod 4}}\frac{2i\tau\lz\leg{\cdot}{\l},q\pz a_{z-w}(\l)}{\l^wq^s}\\
			&=\frac{-2}{2^s}\sum_{\substack{\l\equiv1\mod 4,\\ q\odd}}\frac{\leg{2}{\l}\tau\lz\leg{\cdot}{\l},q\pz a_{z-w}(\l)}{\l^wq^s}+\frac{2}{4^s}\sum_{\substack{\l\equiv1\mod 4,\\ q\geq1}}\frac{\tau\lz\leg{\cdot}{\l},q\pz a_{z-w}(\l)}{\l^wq^s}\\
			&+\sum_{\substack{\l\equiv3\mod 4,\\ q\equiv 1\mod 4}}\frac{-2i\tau\lz\leg{\cdot}{\l},q\pz a_{z-w}(\l)}{\l^wq^s}
			+\sum_{\substack{\l\equiv3\mod 4,\\ q\equiv 3\mod 4}}\frac{2i\tau\lz\leg{\cdot}{\l},q\pz a_{z-w}(\l)}{\l^wq^s}
		\end{aligned}
	\end{equation}
	All of the terms in \eqref{C(s,w,z) simplified} can be written as combinations of terms of the form \begin{equation}\label{key}
		C(s,w,z;\psi,\psi'):=\sum_{\l,q\geq 1}\frac{G\lz\leg{\cdot}{\l},q\pz\psi(\l)\psi'(q)}{\l^wq^s}\sum_{k\mid \l}\frac{\mu(k)}{k^{z-w}},
	\end{equation} where $\psi,\psi'$ are characters modulo $b\mid 8$, and $\psi(2)=0$. In particular, we have
	\begin{equation}\label{C(s,w,z) as twisted C(s,w,z)}
		\begin{aligned}
			C(s,w,z)&=
			-2^{-s}\big( C(s,w,z;\psi_2,\psi_1)+C(s,w,z;\psi_{-2},\psi_1)\big)\\
			&+4^{-s}\big( C(s,w,z;\psi_1,\psi_0)+C(s,w,z;\psi_{-1},\psi_0)\big)\\
			&+C(s,w,z;\psi_1,\psi_{-1})-C(s,w,z;\psi_{-1},\psi_{-1}).
		\end{aligned}
	\end{equation}
	
	In $C(s,w,z;\psi,\psi')$, all coefficients are multiplicative in $\l$.
	We write
	\begin{equation}\label{key}
		C(s,w,z;\psi,\psi')=\sum_{q=1}^\infty\frac{\psi'(q)}{q^s}\cdot D(w,z-w,q;\psi),
	\end{equation}
	where 
	\begin{equation}\label{key}
		D(w,t,q;\psi)=\sum_{\l=1}^\infty\frac{G\lz\leg{\cdot}{\l},q\pz\psi(\l)a_{t}(\l)}{\l^w}.
	\end{equation} We have the following lemma:

	\begin{lemma}\label{Estimate For D(w,t)}
		\begin{enumerate}
			\item $D(w,t,q;\psi)$ has a meromorphic continuation to the region \begin{equation}\label{key}
				\{(w,t):\ \re(w)>1,\ \re(w+t)>1\}.
			\end{equation}
			The only pole in this region is at $w=3/2$, which occurs if $q=\square,$ $\psi$ is a principal character and $t\neq 0.$
			\item For $\re(w)>1+\epsilon$ and $\re(t+w)>1+\epsilon$, away from the possible poles, we have \begin{equation}\label{key}
				\lvert D(w,t,q,\psi)\rvert \ll\lvert w(t+w)\rvert ^{\epsilon} q^{\max\{\epsilon,\epsilon-\re(t)\}}.
			\end{equation}		
		\end{enumerate}
	\end{lemma}
	
	\begin{proof}
		We can write $D(w,t,q;\psi)$ as an Euler product
		\begin{equation}\label{key}
			\begin{aligned}
				D(w,t,q;\psi)&=\prod_p\lz\sum_{k=0}^\infty\frac{G\lz\leg{\cdot}{p^k},q\pz\psi(p^k)a_t(p^k)}{p^{kw}}\pz\\
				&=P_{p\nmid q}(w,t,q;\psi)P_{p\mid q}(w,t,q;\psi),
			\end{aligned}
		\end{equation}
		where $P_{p\nmid q}$ is the product over odd primes not dividing $q$, and $P_{p\mid q}$ is the rest. Since $\psi(2)=0$, we can only consider odd primes. Also note that $P_{p\mid q}$ is a finite product, and each factor has only finitely many terms by \eqref{Sound's Gauss sums - exact formula}.
		
		For an odd $p\nmid q,$ we have
		\begin{equation}\label{key}
			G\lz\leg{\cdot}{p^k},q\pz=\begin{cases}
				1,&\hbox{if $k=0$,}\\
				\leg{q}{p}\sqrt p,&\hbox{if $k=1$,}\\
				0,&\hbox{if $k\geq2$,}
			\end{cases}
		\end{equation}
		so for $P_{p\nmid q}$, we have
		\begin{equation}\label{key}
			\begin{aligned}
				P_{p\nmid q}(w,t,q;\psi)&=\prod_{p\nmid q}\lz1+\frac{\leg{4q}{p}\psi(p)}{p^{w-1/2}}\pz\prod_{p\nmid q}\frac{1+\frac{\leg{4q}{p}\psi(p)(1-p^{-t})}{p^{w-1/2}}}{1+\frac{\leg {4q}p\psi(p)}{p^w-1/2}}\\
				%&=\prod_{p}\lz1-\frac{\leg{4q}{p}^2\psi(p)^2}{p^{2w-1}}\pz\lz1-\frac{\leg{4q}{p}\psi(p)}{p^{w-1/2}}\pz^{-1}\prod_{p}\lz1-\frac{\leg{4q}{p}\psi(p)}{p^{t+w-1/2}}\cdot\frac{1}{1+\frac{\leg{q}{p}\psi(p)\sqrt p}{p^w}}\pz=\\
				&=\frac{L\lz w-1/2,\leg{4q}{\cdot}\psi\pz}{\zeta_{(4q)}(2w-1)}E(w,t,q;\psi),
			\end{aligned}
		\end{equation} where \begin{equation}\label{key}
			\begin{aligned}
				&E(w,t,q;\psi)=\prod_{p}\lz1-\frac{\leg{4q}{p}\psi(p)}{p^{t+w-1/2}}\cdot\frac{1}{1+\frac{\leg{4q}{p}\psi(p)\sqrt p}{p^w}}\pz\\
				&=\frac{1}{L\lz t+w-1/2,\leg{4q}{\cdot}\psi\pz}\prod_p\lz\frac{1-\frac{\leg{4q}{p}\psi(p)}{p^{t+w-1/2}}\cdot\frac{1}{1+\leg{q}{p}\psi(p)p^{1/2-w}}}{1-\frac{\leg{4q}{p}\psi(p)}{p^{t+w-\frac12}}}\pz\\
				%&=\frac{1}{L\lz t+w-\frac12,\leg{4q}{\cdot}\psi\pz}\prod_p\lz1+\frac{\frac{\leg{4q}{p}\psi(p)}{p^{t+w-1/2}}\cdot\lz1-\frac1{1+\leg{4q}{p}\psi(p)p^{1/2-w}}\pz}{1-\frac{\leg{4q}{p}\psi(p)}{p^{t+w-1/2}}}\pz=\\
				%&=\frac1{L\lz t+w-\frac12,\leg{4q}{\cdot}\psi\pz}\prod_p\lz1+\frac{\leg{4q}{p}\psi(p)\frac{\leg {4q}p\psi(p)p^{1/2-w}}{1+\leg qp\psi(p)p^{1/2-w}}}{p^{t+w-1/2}-\leg{4q}{p}\psi(p)}\pz=\\
				&=\frac{1}{L\lz t+w-\frac12,\leg{4q}{\cdot}\psi\pz}\prod_{p}\lz1+\frac{\leg {4q}p^2}{\lz p^{t+w-\frac12}-\leg{4q}{p}\psi(p)\pz\lz p^{w-\frac12}+\leg{4q}{p}\psi(p)\pz}\pz.
			\end{aligned}
		\end{equation}
		For $\re(t+w)>1+\epsilon,\ \re(w)>1+\epsilon,$ the last Euler product is absolutely convergent and $\ll 1$. This finishes the proof of part (i) of the Lemma.
		
		To prove part (ii), we have to estimate the size of the remaining factors.

		We have \begin{equation}\label{key}
			\zeta_{(4q)}(2w-1)^{-1}=\zeta(2w-1)^{-1}\prod_{p\mid 4q}\lz1-\frac{1}{p^{2w-1}}\pz^{-1},
		\end{equation} and for $\re(w)>1/2+\epsilon$, the product can be bounded by
		\begin{equation}\label{bound for zeta function}
			\prod_{p\mid 4q}\lz1-\frac{1}{p^{2w-1}}\pz^{-1}=\prod_{p\mid 4q}\lz1+\frac{1}{p^{2w-1}-1}\pz\ll c^{\omega(4q)}\ll q^\epsilon,
		\end{equation} where $c$ is a suitable constant (depending on $\epsilon$), and the last bound follows from the elementary estimate $\omega(n)\ll\frac{\log n}{\log\log n}$ (here $\omega(n)$ denotes the number of prime factors of $n$).
		
		It remains to bound $P_{p\mid q}(w,t,q;\psi)$.
		By \eqref{Sound's Gauss sums - exact formula}, we can write it as
		\begin{equation}\label{Product over primes dividing q}
			\begin{aligned}
				&\prod_{p^a\mid \mid q}\lz1+\sum_{k=1}^{\lfloor \frac a 2\rfloor}\frac{\phi(p^{2k})\psi(p^{2k})}{p^{2kw}}(1-p^{-t})+\frac{G\lz\leg{\cdot}{p^{a+1}},q\pz\psi(p^{a+1})}{p^{(a+1)w}}(1-p^{-t})\pz\\
				&\ll\prod_{p\mid q}\lz1+(1-p^{-t})\sum_{k=1}^\infty p^{2k(1-w)}\pz,
			\end{aligned}
		\end{equation} and the geometric series is $\ll 1$ for $\re(w)>1+\epsilon$. We have 
		\begin{equation}\label{key}
			1-p^{-t}\ll\begin{cases}
				1,&\hbox{if $\re(t)\geq0$,}\\
				p^{-\re(t)},&\hbox{if $\re(t)<0$,}
			\end{cases}
		\end{equation} so the last expression in $\eqref{Product over primes dividing q}$ is \begin{equation}\label{key}
			\ll\prod_{p\mid q}\lz1+c_1\lz1+p^{-\re(t)}\pz\pz\ll\begin{cases}
				q^{\epsilon},&\hbox{if $\re(t)\geq0$,}\\
				q^{-\re(t)+\epsilon},&\hbox{if $\re(t)<0$,}
			\end{cases}
		\end{equation}
		where the bounds were obtained similarly as in \eqref{bound for zeta function}.
	\end{proof}
	Using this Lemma, we can extend each of $C(s,w,z;\psi,\psi')$, and hence also $C(s,w,z)$ to the region
	\begin{equation}\label{key}
		\{(s,w,z):\ \re(w)>1,\ \re(z)>1,\ \re(s)+\min\{0,\re(z-w)\}>1\}.
	\end{equation}
	Using the functional equation \eqref{Functional equation in s} enables us to extend $A(s,w,z)$ to 
	\begin{equation}\label{key}
		S_3=\{(s,w,z):\ \re(s+w)>1,\ \re(s+z)>1,\ \re(1-s)+\min\{0,\re(z-w)\}>1\}.
	\end{equation}
	The convex hull of $S_2$ and $S_3$ contains
	\begin{equation}\label{Final region of definition for A(s,w,z)}
		\begin{aligned}
			S_4=\Bigg\{(s,w,z):\ &\re(s+2w)>2,\ \re(s+2z)>2,\\
			&\re(s+z)>1,\ \re(s+w)>1,\ \re(z)>1/2\Bigg\}.
		\end{aligned}
	\end{equation}

	\subsection{Bounding $A(s,w,z)$ in vertical strips}\label{Section bound in vertical strips}
	
	We now give bounds for $\lvert A(s,w,z)\rvert $ in vertical strips, which are necessary to bound the error term coming from the shifted integral. To get the desired result, we need a bound of the form $\lvert wz\rvert ^{\epsilon}\lvert s\rvert ^K$ for some constant $K$, so we can be a little wasteful in the exponent of $s$.
	
	For the earlier defined regions $S_j$, we define \begin{equation}\label{Definition of S tilde}
		\tilde S_j=S_{j,\epsilon}\cap\{(s,w,z):\re(s)>-5/2,\ \re(w)>1/2-\epsilon\},
	\end{equation}
	where $S_{j,\epsilon}=S_j+\epsilon \vec{v}$, with  $\vec{v}=(1,1,1)$. Let also 
	\begin{equation}\label{key}
		p(s,w)=(s-1)(w-1)(s+w-3/2),
	\end{equation} so that $A(s,w,z)p(s,w)$ is an analytic function in the considered regions. We also denote $\tilde p(s,w)=1+\lvert p(s,w)\rvert $.
	
	We first give bounds in the restricted regions $\tilde S_0$, $\tilde S_1$. We have
	\begin{equation}\label{key}
		\lvert p(s,w)A(s,w,z)\rvert =\lab p(s,w)\sum_{\substack{n\geq1,\\n\odd}}\frac{L_{(2)}(w,\chi_n)}{L_{(2)}(z,\chi_n)n^s}\rab\ll\tilde p(s,w)\lvert wz\rvert ^{\epsilon},
	\end{equation}
	valid in $\tilde S_0$. Exchanging summations and using \eqref{Lindelof}, we also get the bound
	\begin{equation}\label{key}
		\begin{aligned}
			\lvert p(s,w)A(s,w,z)\rvert &=\lab p(s,w) \sum_{\substack{m,k\geq1,\\m,k\odd}}\frac{\mu(k)L\lz s,\leg{4mk}{\cdot}\pz}{m^wk^z}\rab\\
			&\ll\tilde p(s,w)(1+\lvert s\rvert )^{\max\{\epsilon,\frac12-\re(s)+\epsilon\}}.
		\end{aligned}
	\end{equation} which holds in  $\tilde S_1$. Using Proposition \ref{Extending inequalities}, we get the bound
	\begin{equation}\label{key}
		\lvert p(s,w)A(s,w,z)\rvert \ll \tilde p(s,w) \lvert wz\rvert ^{\epsilon}(1+\lvert s\rvert )^{3+\epsilon}
	\end{equation} in the convex hull of $\tilde S_0,\tilde S_1$, which is $\tilde S_2$.
	
	By \eqref{C(s,w,z) as twisted C(s,w,z)} and Lemma \ref{Estimate For D(w,t)}, we have 
	\begin{equation}\label{key}
		\lvert (w-3/2)C(s,w,z)\rvert \ll (1+\lvert w-3/2\rvert )\lvert wz\rvert ^{\epsilon}
	\end{equation} in the region \begin{equation}\label{key}
		\{(s,w,z):\re(w)>1+\epsilon,\ \re(z)>1+\epsilon,\ \re(s)+\min\{0,\re(z-w)>1+\epsilon\},
	\end{equation} so the functional equation \eqref{Functional equation in s} gives the bound
	\begin{equation}\label{key}
		\lvert p(s,w)A(s,w,z)\rvert \ll \tilde p(s,w) \lvert wz\rvert ^{\epsilon}(1+\lvert s\rvert )^{3+\epsilon}
	\end{equation} in the region $\tilde S_3.$ Using Proposition \ref{Extending inequalities} once more gives us the final bound
	\begin{equation}
		\lvert p(s,w)A(s,w,z)\rvert \ll \tilde p(s,w)\lvert wz\rvert ^{\epsilon}(1+\lvert s\rvert )^{3+\epsilon}
	\end{equation} in the convex hull of $\tilde S_2$ and $\tilde S_3$, which is $\tilde S_4.$
	
	Finally, dividing everything by $p(s,w)$, we obtain the following bound valid in $\tilde S_4$ and away from the poles of $A(s,w,z)$:
	\begin{equation}\label{Bound in vertical strips}
		\lvert A(s,w,z)\rvert \ll\lvert wz\rvert ^{\epsilon}(1+\lvert s\rvert )^{3+\epsilon}.
	\end{equation}

	\subsection{Residue of $A(s,w,z)$ at $s=3/2-w$}
	
	We have \begin{equation}\label{key}
		C(s,w,z;\psi,\psi')=\sum_{q\geq1}\frac{\psi'(q)}{q^s}\cdot D(w,z-w,q;\psi),
	\end{equation} and if $\psi=\psi_1,$ $q=\square,$ and $z-w\neq0$, $D(w,z-w,q;\psi)$ has a pole at $w=3/2$. Using the notation from the proof of Lemma \ref{Estimate For D(w,t)}, we have \begin{equation}\label{key}
		D(w,t,q;\psi_1)=\frac{L\lz w-1/2,\leg{4q}{\cdot}\psi_1\pz}{\zeta_{(4q)}(2w-1)}E(w,t,q;\psi_1)P_{p\mid q}(w,t,q;\psi_1).
	\end{equation}
	We now compute the residue. The residue at $w=3/2$ of $\frac{L\lz w-1/2,\leg{4q}{\cdot}\psi_1\pz}{\zeta_{(4q)}(2w-1)}$ is 
	\begin{equation}\label{key}
		\frac1{\zeta(2)}\prod_{p\mid 4q}\frac{p}{p+1}.
	\end{equation} 
	
	If $p$ is an odd prime and $p^{2a}\mid \mid q,$ then by $\eqref{Sound's Gauss sums - exact formula}$ \begin{equation}\label{key}
		\begin{aligned}
			P_{p\mid q}&\lz\frac32,z-\frac32,q;\psi_1\pz=\prod_{\substack{p\mid q,\\p\odd}}\sum_{k=0}^\infty\frac{G\lz\leg{\cdot}{p^k},q\pz\psi(p^k)a_{z-\frac32}(p^k)}{p^{\frac{3k}2}}\\
			&=\prod_{\substack{p\mid q,\\p\odd}}\lz1+\sum_{k=1}^a\frac{G\lz\leg{\cdot}{p^{2k}},q\pz a_{z-\frac32}(p^{2k})}{p^{3k}}+\frac{G\lz\leg{\cdot}{p^{2a+1}},q\pz a_{z-\frac32}(p^{2a+1})}{p^{3a+\frac32}}\pz\\
			&=\prod_{\substack{p\mid q,\\p\odd}}\lz1+\lz1-p^{\frac32-z}\pz\sum_{k=1}^{a}\frac{\phi(p^{2k})}{p^{3k}}+\lz 1-p^{\frac32-z}\pz\frac{p^{2a}\sqrt p}{p^{3a+\frac32}}\pz\\
			%&=\prod_{\substack{p\mid q,\\p\odd}}\lz1+(1-p^{-t})\lz\frac{p-1}{p}\sum_{k=1}^a p^{-k}+p^{-a-1}\pz\pz=\\
			%&=\prod_{\substack{p\mid q,\\p\odd}}\lz1+(1-p^{-t})\lz \frac{p-1}{p}\cdot\frac{1-p^{-a}}{p-1}+p^{-a-1}\pz\pz=\\
			&=\prod_{\substack{p\mid q,\\p\odd}}\lz1+\frac {1-p^{\frac32-z}}{p}\pz.
		\end{aligned}
	\end{equation}
	
	Finally, for $q=\square$ and $\psi=\psi_1$, we have
	\begin{equation}\label{E}
		\begin{aligned}
			&E\lz\frac32,z-\frac32,q;\psi\pz\\
			&=\frac1{L\lz z-\frac12,\leg{4q}{\cdot}\pz}\prod_{p}\lz 1+\frac{\leg{4q}{p}^2}{\lz p^{z-\frac12}-\leg {4q}p\pz\lz p+\leg{4q}{p}\pz}\pz\\
			&=\frac1{\zeta\lz z-\frac12\pz}\prod_{p\mid 4q}\lz1-\frac1{p^{z-\frac12}}\pz^{-1}\prod_{p\nmid 4q}\lz1+\frac1{\lz p^{z-\frac12}-1\pz\lz p+1\pz}\pz.
		\end{aligned}
	\end{equation}
	If we now denote \begin{equation}\label{key}
		P(z)=\prod_{p}\lz1+\frac1{\lz p^{z-\frac12}-1\pz\lz p+1\pz}\pz,
	\end{equation} the last expression in \eqref{E} equals
	\begin{equation}
		\begin{aligned}
			&\frac{ P(z)}{\zeta\lz z-\frac12\pz}\prod_{p\mid 4q}\lz1-\frac1{p^{z-\frac12}}\pz^{-1}\lz1+\frac1{\lz p^{z-\frac12}-1\pz(p+1)}\pz^{-1}\\
			%&=\frac {P(z)}{\zeta\lz z-\frac12\pz}\prod_{p\mid 4q}\frac{p^{z-1/2}}{p^{z-1/2}-1}\cdot\frac{\lz p^{z-\frac12}-1\pz(p+1)}{\lz p^{z-\frac12}-1\pz(p+1)+1}=\\
			&=\frac {P(z)}{\zeta\lz z-\frac12\pz}\prod_{p\mid 4q}\frac{p^{z-1/2}(p+1)}{\lz p^{z-\frac12}-1\pz(p+1)+1},
		\end{aligned}
	\end{equation}

	Putting everything together, we find that \begin{equation}\label{key}
		\begin{aligned}
			&\res_{w=3/2}C(s,w,z;\psi_1,\psi')=\frac{P(z)}{\zeta(2)\zeta\lz z-\frac12\pz}\sum_{q\geq1}\frac{\psi'(q^2)}{q^{2s}}\times\\
			&\times\prod_{p\mid 4q}\frac{p}{p+1}\cdot\frac{p^{z-1/2}(p+1)}{\lz p^{z-1/2}-1\pz(p+1)+1}\prod_{\substack{p\mid q,\\ p\odd}}\frac{p+1-p^{3/2-z}}{p}.
		\end{aligned}
	\end{equation}
	We have \begin{equation}\label{key}
		\frac p{p+1}\cdot\frac{p^{z-1/2}(p+1)}{(p^{z-1/2}-1)(p+1)+1}\cdot\frac{p+1-p^{3/2-z}}{p}=1,
	\end{equation} 
	so 
	\begin{equation}\label{key}
		\begin{aligned}
			\res_{w=3/2}C(s,w,z;\psi_1,\psi')&%=\frac{P(z)}{\zeta(2)\zeta\lz z-\frac12\pz}\cdot\frac{2^{z+1/2}}{3\cdot 2^{z-1/2}-2}\sum_{q\geq1}\frac{\psi'(q)^2}{q^{2s}}=\\
			&=\frac{P(z)}{\zeta(2)\zeta\lz z-\frac12\pz}\cdot\frac{2^{z+1/2}}{3\cdot 2^{z-1/2}-2}\cdot L\lz 2s,\psi'^2\pz.
		\end{aligned}
	\end{equation}
	
	Using \eqref{C(s,w,z) as twisted C(s,w,z)} gives
	\begin{equation}\label{Residue of C(s,w,z)}
		\begin{aligned}
			\res_{w=\frac32} C(s,w,z)&=4^{-s}\res_{w=\frac32}C(s,w,z;\psi_1,\psi_0)+\res_{w=\frac32}C(s,w,z;\psi_1,\psi_{-1})\\
			&=\frac{P(z)}{\zeta(2)\zeta(z-1/2)}\cdot\frac{2^{z+1/2}}{3\cdot 2^{z-1/2}-2}\lz 4^{-s}\zeta(2s)+L(2s,\psi_1)\pz\\
			&=\frac{P(z)\zeta(2s)}{\zeta(2)\zeta(z-1/2)}\cdot\frac{2^{z+1/2}}{3\cdot 2^{z-1/2}-2}.
		\end{aligned}
	\end{equation} Note that this is also true in the case $w=z,$ when there is no pole and the residue is $0$.
	
	The functional equation
	\begin{equation}\label{key}
		A(s,w,z)=\frac{\pi^{s-1/2}\Gamma\bfrac{1-s}{2}}{4^s\Gamma\bfrac s2}C(1-s,s+w,s+z)
	\end{equation}
	implies that
	\begin{equation}\label{key}
		\begin{aligned}
			&\res_{s=3/2-w}A(s,w,z)\\
			&=\frac{\pi^{1-w}\Gamma\bfrac{w-1/2}{2}}{\Gamma\bfrac{3/2-w}{2}}\cdot\frac{P(3/2-w+z)\zeta(2w-1)}{\zeta(2)\zeta(1-w+z)}\cdot\frac{2^{z+w-1}}{3\cdot 2^{z+1-w}-2}.
		\end{aligned}
	\end{equation}
	Substituting $w=1/2+\alpha$, $z=1/2+\beta$, we obtain
	\begin{equation}\label{key}
		\begin{aligned}
			&\res_{s=1-\alpha}A(s,1/2+\alpha,1/2+\beta)
			\\&=\frac{\pi^{1/2-\alpha}\Gamma\bfrac{\alpha}{2}}{\Gamma\bfrac{1-\alpha}{2}}\cdot\frac{P(3/2-\alpha+\beta)\zeta(2\alpha)}{\zeta(2)\zeta(1-\alpha+\beta)}\cdot\frac{2^{\alpha+\beta}}{3\cdot 2^{1-\alpha+\beta}-2}.
		\end{aligned}
	\end{equation}
	Using the functional equation \begin{equation}\label{key}
		\zeta(2\alpha)=\pi^{2\alpha-1/2}\frac{\Gamma\bfrac{1-2\alpha}{2}}{\Gamma(\alpha)}\zeta(1-2\alpha),
	\end{equation} the last expression becomes
	\begin{equation}\label{key}
		\frac{\pi^{\alpha}\Gamma\bfrac{\alpha}{2}\Gamma\bfrac{1-2\alpha}{2}}{\Gamma\bfrac{1-\alpha}{2}\Gamma(\alpha)}\cdot\frac{P(3/2-\alpha+\beta)\zeta(1-2\alpha)}{\zeta(2)\zeta(1-\alpha+\beta)}\cdot\frac{2^{\alpha+\beta}}{3\cdot 2^{1-\alpha+\beta}-2}.
	\end{equation}
	Now using the relation \eqref{RatioOfGammaFactors} gives
	\begin{equation}\label{Residue at s=3/2-w wrong gamma factors}
		\pi^{\alpha-\frac12}\cos\bfrac{\pi \alpha}{2}\Gamma\lz\frac12-\alpha\pz\frac{P\lz\frac32-\alpha+\beta\pz\zeta(1-2\alpha)}{\zeta(2)\zeta(1-\alpha+\beta)}\cdot\frac{2^{\beta}}{3\cdot 2^{-\alpha+\beta}-1},
	\end{equation}
	and an application of Lemma \ref{G odd + G even}
	yields
	\begin{equation}\label{Residue at s=3/2-w}
		\pi^{\alpha}\lz\Go\lz\frac12+\alpha\pz+\Ge\lz\frac12+\alpha\pz\pz\frac{P\lz\frac32-\alpha+\beta\pz\zeta(1-2\alpha)}{\zeta(2)\zeta(1-\alpha+\beta)}\cdot\frac{1}{6-2^{1+\alpha-\beta}}.
	\end{equation}
	\vspace{0pt}
	\section{Proof of Theorem \ref{Theorem for all characters}}
	We will now prove Theorem \ref{Theorem for all characters}. We have
	\begin{equation}\label{key}
		\sum_{\substack{n\geq1,\\n\odd}}\frac{L_{(2)}\lz\frac12+\alpha,\chi_n\pz}{L_{(2)}\lz\frac12+\beta,\chi_n\pz}f\bfrac nX=\frac1{2\pi i}\int\limits_{(2)}A\lz s,\frac12+\alpha,\frac12+\beta\pz\M f(s)X^sds.
	\end{equation}
	By \eqref{Final region of definition for A(s,w,z)} and the definition of $\tilde S_4$ in \eqref{Definition of S tilde}, we can shift the integral to $\re(s)=N(\alpha,\beta)+\epsilon,$ where 
	\begin{equation}\label{key}
		N(\alpha,\beta)=\max\left\{1-2\re(\alpha),1-2\re(\beta),\frac12-\re(\alpha),\frac12-\re(\beta),-\frac52\right\}.
	\end{equation} We capture the residues at $s=1$ \eqref{Residue at s=1} and $s=1-\alpha$ \eqref{Residue at s=3/2-w}, and then use \eqref{Bound in vertical strips} to estimate the error term. Thus if $\re(\alpha)>0$, $\re(\beta)>\epsilon$, we get
	\begin{equation}
		\begin{aligned}
			\sum_{\substack{n\geq1,\\n\odd}}&\frac{L_{(2)}(1/2+\alpha,\chi_n)}{L_{(2)}(1/2+\beta,\chi_n)}f(n/X)\\
			&=X\M f(1)\res_{s=1}A\lz s,\frac12+\alpha,\frac12+\beta\pz\\
			&+X^{1-\alpha}\M f(1-\alpha)\res_{s=1-\alpha}A\lz s,\frac12+\alpha,\frac12+\beta\pz\\
			&+\frac{1}{2\pi i}\int_{(N(\alpha,\beta)+\epsilon)}A(s,w,z)\M f(s)X^sds\\
			&=X\M f(1)\frac{\zeta_{(2)}(1+2\alpha)}{2\zeta_{(2)}(1+\alpha+\beta)}\prod_{p>2}\lz1+\frac{p^{\alpha-\beta}-1}{p^{1+\alpha-\beta}(p^{1+\alpha+\beta}-1)}\pz\\
			&+X^{1-\alpha}\M f(1-\alpha)\pi^{\alpha}\lz\Go\lz\frac12+\alpha\pz+\Ge\lz\frac12+\alpha\pz\pz\times\\
			&\times\frac{P\lz\frac32-\alpha+\beta\pz\zeta(1-2\alpha)}{\zeta(2)\zeta(1-\alpha+\beta)(6-2^{\alpha-\beta+1})}\\
			&+O\lz (1+\lvert \alpha\rvert )^{\epsilon}\lvert \beta\rvert ^{\epsilon}X^{N(\alpha,\beta)+\epsilon}\pz.
		\end{aligned}
	\end{equation}
	
	\section{Proof of Theorem \ref{Theorem for log derivatives}}
	
	Fix $r$ with $\re(r)>\epsilon$. We let 
	\begin{equation}\label{key}
		M_1(\alpha,\beta)=\M f(1)\frac{\zeta_{(2)}(1+2\alpha)}{2\zeta_{(2)}(1+\alpha+\beta)}\prod_{p>2}\lz1+\frac{p^{\alpha-\beta}-1}{p^{1+\alpha-\beta}(p^{1+\alpha+\beta}-1)}\pz,
	\end{equation} and 
	\begin{equation}\label{key}
		\begin{aligned}
			M_2(\alpha,\beta)=\M f(1-\alpha)\pi^{\alpha}&\lz\Go\lz\frac12+\alpha\pz+\Ge\lz\frac12+\alpha\pz\pz\times\\
			&\times\frac{P\lz\frac32-\alpha+\beta\pz\zeta(1-2\alpha)}{\zeta(2)\zeta(1-\alpha+\beta)(6-2^{\alpha-\beta+1})},
		\end{aligned}
	\end{equation} so that \eqref{Asymptotic for ratios of all characters} is
	\begin{equation}\label{key}
		\sum_{\substack{n\geq1,\\n\odd}}\frac{L_{(2)}(1/2+\alpha,\chi_n)}{L_{(2)}(1/2+\beta,\chi_n)}f(n/X)=XM_1(\alpha,\beta)+X^{1-\alpha}M_2(\alpha,\beta)+E(X,\alpha,\beta),
	\end{equation} where $E(X,\alpha,\beta)\ll X^{N(\alpha,\beta)+\epsilon}$ is the error term. Note that the left-hand side and both $M_1(\alpha,\beta)$ and $M_2(\alpha,\beta)$ are analytic functions of $\alpha,\beta$, so  $E(X,\alpha,\beta)$ is analytic too.
	
	To deduce Theorem \ref{Theorem for log derivatives}, we fix $\beta=r$ with $\re(\beta)>\epsilon$, differentiate with respect to $\alpha$, and set $\alpha=\beta=r.$
	
	For the first term, we get
	\begin{equation}\label{key}
		\begin{aligned}
			\frac{d}{d\alpha} XM_1(\alpha,\beta)\Big\vert_{\alpha=\beta=r}%&=X\M f(1)\lz\frac{\zeta_{(2)}'(1+2r)}{\zeta_{(2)}(1+2r)}-\frac{\zeta_{(2)}'(1+2r)}{2\zeta_{(2)}(1+2r)}+\sum_{p>2}\frac{\log p}{2p(p^{1+2r}-1)}\pz=\\
			&=\frac{X\M f(1)}{2}\lz\frac{\zeta_{(2)}'(1+2r)}{\zeta_{(2)}(1+2r)}+\sum_{p>2}\frac{\log p}{p(p^{1+2r}-1)}\pz.
		\end{aligned}
	\end{equation}
	For the second term, we notice that due to the factor $\frac{1}{\zeta(1-\alpha+\beta)}$, only one term survives. We also note that $P(3/2)=\zeta(2)$, so we get
	\begin{equation}\label{key}
		\begin{aligned}
			\frac{d}{d\alpha} &X^{1-\alpha}M_2(\alpha,\beta)\Big\vert_{\alpha=\beta=r}=\\
			&=-X^{1-r}\M f(1-r)\pi^r\lz\Go\lz\frac12+r\pz+\Ge\lz\frac12+r\pz\pz\frac{\zeta(1-2r)}{4}.
		\end{aligned}
	\end{equation}
	Therefore we have
	\begin{equation}\label{Sum of L'/L with removed 2-factors}
		\begin{aligned}
			&\sum_{\substack{n\geq1,\\n\odd}}\frac{L_{(2)}'(1/2+r,\chi_n)}{L_{(2)}(1/2+r,\chi_n)}f(n/X)=\\
			&=\frac{X\M f(1)}{2}\lz\frac{\zeta_{(2)}'(1+2r)}{\zeta_{(2)}(1+2r)}+\sum_{p>2}\frac{\log p}{p(p^{1+2r}-1)}\pz-\\
			&-X^{1-r}\M f(1-r)\pi^r\lz\Go\lz\frac12+r\pz+\Ge\lz\frac12+r\pz\pz\frac{\zeta(1-2r)}{4}+\\
			&+\frac{d}{d\alpha}E(X,\alpha,\beta)\Big\vert_{\alpha=\beta=r}.
		\end{aligned}
	\end{equation}
	Since $E(X,\alpha,\beta)$ is analytic in $\alpha$, we can use Cauchy's integral formula to compute its derivative. We have 
	\begin{equation}\label{key}
		\frac{d}{d\alpha}E(X,\alpha,\beta)=\frac{1}{2\pi i}\int_{C_\alpha}\frac{E(X,z,\beta)}{(z-\alpha)^2}dz,
	\end{equation} where $C_\alpha$ is a circle centered at $\alpha$ of radius $\rho$ with $\epsilon/2<\rho<\epsilon$. Then
	\begin{equation}\label{key}
		\lab\frac{d}{d\alpha}E(X,\alpha,\beta)\rab\ll \frac1{\rho}\cdot\max_{z\in C_\alpha} \lvert E(X,z,\beta)\rvert \ll (1+\lvert \alpha\rvert )^{\epsilon}\lvert \beta\rvert ^{\epsilon}X^{N(\alpha,\beta)+\epsilon}.
	\end{equation} Taking $\alpha=\beta=r$ and denoting $N(r):=N(r,r)$ gives
	\begin{equation}\label{key}
		\lab\frac{d}{d\alpha}E(X,\alpha,\beta)\rab\ll \lvert r\rvert ^{\epsilon}X^{N(r)+\epsilon}.
	\end{equation}

	Now we recover the Euler factors at 2 that we removed from $A(s,w,z)$. For odd $n$, we have \begin{equation}\label{key}
		L(1/2+r,\chi_n)=L_{(2)}(1/2+r,\chi_n)\lz1-\frac{\leg 2n}{2^{1/2+r}}\pz^{-1},
	\end{equation} so
	\begin{equation}\label{key}
		\begin{aligned}
			&\sum_{\substack{n\geq1,\\n\odd}}\frac{L'(1/2+r,\chi_n)}{L(1/2+r,\chi_n)}f(n/X)=\\
			&=\sum_{\substack{n\geq1,\\n\odd}}\frac{L_{(2)}'(1/2+r,\chi_n)}{L_{(2)}(1/2+r,\chi_n)}f(n/X)-\log 2\sum_{\substack{n\geq1,\\n\odd}}\frac{\leg2n}{2^{1/2+r}-\leg 2n}f(n/X).
		\end{aligned}
	\end{equation}
	Since for any $a\in\Z/8\Z$, partial summation gives 
	\begin{equation}\label{key}
		\sum_{n\equiv a\mod 8}f(n/X)=\frac{X}{8}\M f(1)+O(1),
	\end{equation} we get
	\begin{equation}\label{Sum of L'/L in terms of that with removed 2-factors}
		\begin{aligned}
			&\sum_{\substack{n\geq1,\\n\odd}}\frac{L'(1/2+r,\chi_n)}{L(1/2+r,\chi_n)}f(n/X)=\\
			%&=\sum_{n\odd}\frac{L_{(2)}'(1/2+r,\chi_n)}{L_{(2)}(1/2+r,\chi_n)}f(n/X)-\frac{X\M f(1)}{4}\log 2\lz\frac1{2^{1/2+r}-1}-\frac{1}{2^{1/2+r}+1}\pz+O(1)=\\
			&=\sum_{\substack{n\geq1,\\n\odd}}\frac{L_{(2)}'(1/2+r,\chi_n)}{L_{(2)}(1/2+r,\chi_n)}f(n/X)-\frac{X\M f(1)}{2}\cdot\frac{\log 2}{2^{1+2r}-1}+O(1).
		\end{aligned}
	\end{equation}
	
	We also have
	\begin{equation}\label{zeta'/zeta with removed 2-factors}
		\frac{\zeta'(1+2r)}{\zeta(1+2r)}=\frac{\zeta_{(2)}'(1+2r)}{\zeta_{(2)}(1+2r)}-\frac{\log 2}{2^{2r+1}-1},
	\end{equation}
	so using \eqref{Sum of L'/L with removed 2-factors} and \eqref{zeta'/zeta with removed 2-factors} in \eqref{Sum of L'/L in terms of that with removed 2-factors}, we obtain
	\begin{equation}
		\begin{aligned}
			&\sum_{\substack{n\geq1,\\n\odd}}\frac{L'(1/2+r,\chi_n)}{L(1/2+r,\chi_n)}f(n/X)=\\
			&=\frac{X\M f(1)}{2}\lz\frac{\zeta'(1+2r)}{\zeta(1+2r)}+\sum_{p>2}\frac{\log p}{p(p^{1+2r}-1)}\pz-\\
			&-X^{1-r}\M f(1-r)\pi^r\lz\Go\lz\frac12+r\pz+\Ge\lz\frac12+r\pz\pz\frac{\zeta(1-2r)}{4}+\\
			&+O\lz1+ \lvert r\rvert ^{\epsilon}X^{N(r)+\epsilon}\pz.
		\end{aligned}
	\end{equation}
	
	\section{Proof of Theorem \ref{Theorem for log derivatives for squarefree moduli}}
	
	In this section, we assume that $\epsilon<\re(r)<1/4$. We write all odd integers $n$ as $n=n_0n_1^2$ with $n_0$ square-free. Then we have
	\begin{equation}\label{sieving log derivatives for squarefree}
		\begin{aligned}
			\sum_{\substack{n\geq1,\\n\odd}}&\frac{\mu^2(n)L'(1/2+r,\chi_n)}{L(1/2+r,\chi_n)}f(n/X)=\\
			%&=\sum_{n\odd}\frac{L'(1/2+r,\chi_n)}{L(1/2+r,\chi_n)}f(n/X)\sum_{d^2\mid n}\mu(d)=\\
			&=\sum_{\substack{d\geq1,\\d\odd}}\mu(d)\sum_{\substack{n\geq1,\\n\odd}}\frac{L'(1/2+r,\chi_{nd^2})}{L(1/2+r,\chi_{nd^2})}f\bfrac{nd^2}{X}.
		\end{aligned}
	\end{equation}
	From
	\begin{equation}\label{key}
		L(s,\chi_{nd^2})=L(s,\chi_{n})\prod_{p\mid d}\lz1-\frac{\chi_{n}(p)}{p^s}\pz,
	\end{equation} we obtain
	\begin{equation}\label{key}
		\frac{L'(1/2+r,\chi_{nd^2})}{L(1/2+r,\chi_{nd^2})}=\frac{L'(1/2+r,\chi_n)}{L(1/2+r,\chi_n)}+\sum_{p\mid d}\frac{\chi_n(p)\log p}{p^{1/2+r}-\chi_n(p)},
	\end{equation}
	so \eqref{sieving log derivatives for squarefree} equals
	\begin{equation}\label{Sieved log derivatives}
		\begin{aligned}
			&\sum_{\substack{d\geq1,\\d\odd}}\mu(d)\sum_{\substack{n\geq1,\\n\odd}}\frac{L'(1/2+r,\chi_n)}{L(1/2+r,\chi_n)}f\bfrac{nd^2}{X}+\\
			&+\sum_{\substack{d\geq1,\\d\odd}}\mu(d)\sum_{\substack{n\geq1,\\n\odd}}f\bfrac{nd^2}{X}\sum_{p\mid d}\frac{\chi_n(p)\log p}{p^{1/2+r}-\chi_n(p)}.
		\end{aligned}
	\end{equation}
	
	Let us now consider the first sum. The terms with $d> \sqrt X$ here contribute $\ll \lvert r\rvert ^\epsilon X^{1/2}$ by \eqref{Size of log derivative}. For the other terms, we use \eqref{Asymptotic formula for ratios of log derivative} (with $X/d^2$ instead of $X$). Writing the right-hand side of \eqref{Asymptotic formula for ratios of log derivative} as $X M_1(r)-X^{1-r} M_2(r)+E(X,r)$, this part of \eqref{Sieved log derivatives} contributes
	\begin{equation}\label{Main term for squarefrees}
		\begin{aligned}
			&XM_1(r)\sum_{\substack{d\leq\sqrt X,\\d\odd}}\frac{\mu(d)}{d^2}-X^{1-r}M_2(r)\sum_{\substack{d\leq\sqrt X,\\d\odd}}\frac{\mu(d)}{d^{2-2r}}+\sum_{\substack{d\leq\sqrt X,\\d\odd}}\mu(d)E\lz\frac X{d^2},r\pz\\
			&=\frac{4X M_1(r)}{3\zeta(2)}-\frac{X^{1-r} M_2(r)}{\zeta_{(2)}(2-2r)}+O(\lvert r\rvert ^\epsilon X^{1-2r+\epsilon}).
		\end{aligned}
	\end{equation}
	
	Now we compute the second term in \eqref{Sieved log derivatives}. Setting $s=\frac12+r$ and exchanging summations gives
	\begin{equation}\label{Error term for squarefree log derivatives}
		\begin{aligned}
			&\sum_{p\odd}\frac{\log p}{p^s}\sum_{\substack{d\geq1,\\d\odd}}\mu(pd)\sum_{\substack{n\geq1,\\n\odd}}\frac{\chi_n(p)}{1-\chi_n(p)/p^s}f\bfrac{nd^2p^2}{X}\\
			&=\sum_{p\odd}\frac{\log p}{p^s}\sum_{\substack{d\geq1,\\d\odd}}\mu(pd)\sum_{\substack{n\geq1,\\n\odd}}\chi_n(p)f\bfrac{nd^2p^2}{X}\sum_{k=0}^\infty \frac{\chi_n(p)^k}{p^{ks}}.
		\end{aligned}
	\end{equation}
	We split the last sum depending on the parity of $k$. 
	
	For an even $k$, the sum equals \begin{equation}\label{contribution of even k}
		\sum_{p\odd}\frac{\log p}{p^{(k+1)s}}\sum_{\substack{d\geq1,\\d\odd}}\mu(pd)\sum_{\substack{n\geq1,\\n\odd}}\chi_n(p)f\bfrac{nd^2p^2}{X}.
	\end{equation} 
	If we denote \begin{equation}\label{key}
		\tilde\chi_p=\begin{cases}
			\chi_p&\hbox{if $p\equiv1\mod 4$,}\\
			\chi_p\psi_{-1}&\hbox{if $p\equiv3\mod 4$},
		\end{cases}
	\end{equation} the inner sum is \begin{equation}\label{Even k 1}
		\frac1{2\pi i}\int_{(c)}\frac{L_{(2)}(u,\tilde\chi_p)\M f(u)X^u}{d^{2u}p^{2u}}du\ll\frac{X^{1/2+\epsilon}}{d^{1+2\epsilon}p^{1+\epsilon}}
	\end{equation} where we shifted the integral to $c=1/2+\epsilon$ and used Lindel\"of's bound \eqref{Lindelof}. 
	
	Using \eqref{Even k 1} and summing \eqref{contribution of even k} over all even $k\geq0$, we see that the contribution of even $k$ in \eqref{Error term for squarefree log derivatives} is \begin{equation}\label{key}
		\ll X^{1/2+\epsilon}\sum_{k=0}^{\infty}\sum_{p\geq 1}\frac{\log p}{p^{(2k+1)s+1+\epsilon}}\sum_{d\geq 1}\frac1{d^{1+2\epsilon}}\ll X^{1/2+\epsilon}.
	\end{equation}
	
	Now we compute the contribution of odd values of $k$ into \eqref{Error term for squarefree log derivatives}. For an odd $k$, the summand equals
	\begin{equation}\label{key}
		\begin{aligned}
			&\sum_{p>2}\frac{\log p}{p^{s}}\sum_{\substack{d\geq1,\\d\odd}}\mu(pd)\sum_{\substack{n\geq1,\\n\odd}}\frac{\chi_n(p)^{k+1}}{p^{ks}}f\bfrac{nd^2p^2}{X}\\
			&=\sum_{p>2}\frac{\log p}{p^{(k+1)s}}\sum_{\substack{d\geq1,\\d\odd}}\mu(pd)\sum_{\substack{\substack{n\geq1,\\n\odd},\\p\nmid n}}f\bfrac{nd^2p^2}{X}.
		\end{aligned}
	\end{equation}
	We write the innermost sum as a Mellin integral:
	\begin{equation}\label{key}
		\begin{aligned}
			\sum_{\substack{\substack{n\geq1,\\n\odd,}\\p\nmid n}}f\bfrac{nd^2p^2}{X}&=\frac{1}{2\pi i}\int_{(c)}\frac{\zeta_{(2p)}(u)X^{u}\M f(u)}{d^{2u}p^{2u}}du\\
			&=\frac{X\M f(1)(1-1/p)}{2d^2p^2}+O\lz\frac{X^{1/2+\epsilon}}{d^{1+2\epsilon}p^{1+2\epsilon}}\pz,
		\end{aligned}
	\end{equation} where the last equation holds after shifting the integral to $c=1/2+\epsilon$. Summing the error term over all odd $k$, $d$ and $p$ contributes $\ll X^{1/2+\epsilon}$, so it remains to compute the contribution of the main term. This gives
	\begin{equation}\label{key}
		\begin{aligned}
			&\frac{X\M f(1)}{2}\sum_{p>2}\frac{\lz1-\frac1p\pz\log p}{p^{2+(k+1)s}}\sum_{\substack{d\geq1,\\d\odd}}\frac{\mu(pd)}{d^2}=\frac{-2X\M f(1)}{3\zeta(2)}\sum_{p>2}\frac{\log p}{(p+1)p^{1+(k+1)s}}.
		\end{aligned}
	\end{equation}
	Summing this over odd values of $k$ gives
	\begin{equation}\label{Extra main term for squarefrees}
		\begin{aligned}
			&\frac{-2X\M f(1)}{3\zeta(2)}\sum_{p>2}\frac{\log p}{(p+1)p^{1+s}}\sum_{k=0}^{\infty}\frac1{p^{(2k+1)s}}=\frac{-2X\M f(1)}{3\zeta(2)}\sum_{p>2}\frac{\log p}{p(p+1)(p^{2s}-1)}
		\end{aligned}
	\end{equation}
	
	Putting all together, we obtain the final result
	\begin{equation}\label{Asymptotic for log derivatives for squarefrees}
		\begin{aligned}
			&\sum_{\substack{n\geq1,\\n\odd}}\frac{\mu^2(n)L'(1/2+r,\chi_n)}{L(1/2+r,\chi_n)}f(n/X)\\
			%&=\frac{2X\M f(1)}{3\zeta(2)}\lz\frac{\zeta'(1+2r)}{\zeta(1+2r)}+\sum_{p>2}\frac{\log p}{p(p^{1+2r}-1)}-\sum_{p>2}\frac{\log p}{p(p+1)(p^{1+2r}-1)}\pz-\\
			%&-X^{1-r}\M f(1-r)\pi^r\lz\Go\lz\frac12+r\pz+\Ge\lz\frac12+r\pz\pz\frac{\zeta(1-2r)}{4\zeta_{(2)}(2-2r)}+\\\\
			%&+O\lz \mid r\mid ^\epsilon X^{1-2r+\epsilon}\pz=\\
			&=\frac{2X\M f(1)}{3\zeta(2)}\lz\frac{\zeta'(1+2r)}{\zeta(1+2r)}+\sum_{p>2}\frac{\log p}{(p+1)(p^{1+2r}-1)}\pz\\
			&-X^{1-r}\M f(1-r)\pi^r\lz\Go\lz\frac12+r\pz+\Ge\lz\frac12+r\pz\pz\frac{\zeta(1-2r)}{4\zeta_{(2)}(2-2r)}\\
			&+O\lz \lvert r\rvert ^\epsilon X^{1-2r+\epsilon}\pz.
		\end{aligned}
	\end{equation}
	
	\section{Proof of Corollary \ref{Corollary one-level density}}\label{Section one-level density}
	In this section, we show how the one-level density can be computed using the ratios conjecture. We follow Section 3 of \cite{CoSn}.
	
	Let $h(x)$ be an even Schwartz function whose Fourier transform $\hat h$ is supported in the interval $[-a,a]$ for some $a>0$. It follows that $h$ has an analytic continuation to the whole $\C$ via Fourier inversion.
	
	Recall that the one-level density is defined by 
	\begin{equation}\label{key}
		D(X;h)=\frac{1}{F(X)}\sum_{\substack{n\geq1\\n\odd}}\mu^2(n)f\bfrac nX\sum_{\gamma_n}h\bfrac{\gamma_n\log X}{2\pi},
	\end{equation} where $\gamma_n$ runs over the imaginary parts of the non-trivial zeros of $L(s,\chi_n)$, and
	\begin{equation}\label{Size of family}
		\begin{aligned}
			F(X)&=\sum_{\substack{n\geq 1,\\n\odd}}\mu^2(n)f\bfrac{n}{X}=\frac1{2\pi i}\int_{(c)}\frac{\zeta_{(2)}(s)}{\zeta_{(2)}(2s)}X^s\M f(s)ds\\
			&= \frac{2X\M f(1)}{3\zeta(2)}+O(X^{1/4+\epsilon}).
		\end{aligned}
	\end{equation}

	\smallskip
	
	By the residue theorem, we have
	\begin{equation}\label{}
		\begin{aligned}
			&F(X)D(X;h)\\
			&=\frac{1}{2\pi i}\lz\int_{(c)}-\int_{(1-c)}\pz h\lz\frac{\log X}{2\pi i}(s-1/2)\pz\sum_{\substack{n\geq1,\\n\odd}}\mu^2(n)f(n/X)\frac{L'(s,\chi_n)}{L(s,\chi_n)} ds
		\end{aligned}
	\end{equation} for any $1/2<c<1.$
	
	Using the functional equation \eqref{Functional equation for L-functions of primitive characters} and the fact that $h$ is even, the integral over $(1-c)$ equals
	%	\frac{L'(1-s,\chi_n)}{L(1-s,\chi_n)}=\log(\pi/n)+\frac{G'_n(s)}{G_n(s)}-\frac{L'(s,\chi_n)}{L(s,\chi_n)},
	%\end{equation} so
	\begin{equation}\label{key}
		\begin{aligned}
			&\frac{1}{2\pi i}\int_{(c)}h\lz\frac{\log X}{2\pi i}\lz s-\frac12\pz\pz\times\\
			&\times\lz\sum_{\substack{n\geq 1,\\n\odd}}\mu^2(n)f\bfrac nX\lz\log\bfrac\pi{n}+\frac{\Gamma_n^{'}}{\Gamma_n}(1-s)-\frac{L'}{L}(s,\chi_n)\pz\pz ds,
		\end{aligned}
	\end{equation} where $\Gamma_n(s)$ is $\Ge(s)$ or $\Go(s),$ depending on the parity of $\chi_n$.
	
	Hence
	\begin{equation}\label{One-level density as integral}
		\begin{aligned}
			&F(X)D(X;h)=\\
			&=\frac{1}{\pi i}\int_{(c)}h\lz\frac{\log X}{2\pi i}(s-1/2)\pz\sum_{\substack{n\geq1,\\n\odd}}\mu^2(n)f(n/X)\frac{L'(s,\chi_n)}{L(s,\chi_n)} ds\\
			&-\frac{1}{2\pi i}\int_{(c)}h\lz\frac{\log X}{2\pi i}(s-1/2)\pz\sum_{\substack{n\geq1,\\n\odd}}\mu^2(n)f(n/X)\lz\log\bfrac \pi n+\frac{\Gamma_n'(1-s)}{\Gamma_n(1-s)}\pz ds.
		\end{aligned}
	\end{equation}
	
	We now consider the first integral.
	We start with the following lemma:
	\begin{lemma}\label{Lemma size of h}
		For $h$ as above, and any integer $A>0$, we have 
		\begin{equation}
			h\bfrac{s\log X}{2\pi i}\ll_A \frac{X^{a\cdot\re(s)}}{\lvert s\rvert ^A(\log X)^A}.
		\end{equation}
	\end{lemma}
	\begin{proof}
		Using Fourier inversion and integrating by parts $A$-times, we obtain
		\begin{equation}\label{key}
			\begin{aligned}
				h\bfrac{s\log X}{2\pi i}&=\int_{-\infty}^{\infty}\hat h(t)e^{ts\log X}dt\\
				&=\frac{1}{s^A(\log X)^A}\int_{-\infty}^\infty \hat h^{(A)}(t)X^{ts}dt\\
				&\ll_A\frac{X^{a\cdot\re(s)}}{\lvert s\rvert ^A(\log X)^A}.
			\end{aligned}
		\end{equation}
	\end{proof}
	
	We now split the integral, writing
	\begin{equation}\label{key}
		\frac{1}{\pi i}\int_{(c)}h\lz\frac{\log X}{2\pi i}(s-1/2)\pz\sum_{\substack{n\geq1,\\n\odd}}\mu^2(n)f(n/X)\frac{L'(s,\chi_n)}{L(s,\chi_n)} ds=: I_1+I_2,
	\end{equation} where $I_1$ is the part of the integral with $\lvert \im(s)\rvert \leq X$, and $I_2$ the part with $\lvert \im(s)\rvert >X$.
	
	For $I_2$, we use Lemma \ref{Lemma size of h} and \eqref{Size of log derivative}, and obtain the bound
	\begin{equation}\label{key}
		\begin{aligned}
			&\sum_{\substack{n\geq1,\\n\odd}}\mu^2(n)f\bfrac nX \log^2(n)	\int_{\substack{\re(s)=c\\\lvert \im(s)\rvert >X}}\log ^2(\lvert s\rvert ) \lab h\lz\frac{\log X}{2\pi i}\lz s-1/2\pz\pz\rab ds\\
			&\ll_A X^{1+\epsilon+a(c-1/2)}\int_{\lvert t\rvert >X}\frac{\log ^2(\lvert t\rvert )}{t^A} dt\ll X^{\epsilon},
		\end{aligned}
	\end{equation} where we took any $A>a(c-1/2)+1.$

	For $I_1$, we substitute $s=1/2+r+it$ with $0<r<1/4$ and use Theorem \ref{Theorem for log derivatives for squarefree moduli}. Thus $I_1$ is
	\begin{equation}\label{Integral after ratios}
		\begin{aligned}
			\frac{X}{\pi}&\int_{-X}^{X} h\lz\frac{\log X}{2\pi i}(r+it)\pz\Bigg\{\\
			&\frac{2\M f(1)}{3\zeta(2)}\lz\frac{\zeta'(1+2r+2it)}{\zeta(1+2r+2it)}+\sum_{p>2}\frac{\log p}{(p+1)(p^{1+2r+2it}-1)}\pz\\
			&-\frac{\M f(1-r-it)\pi^{r+it}\lz\Go\lz\frac12+r+it\pz+\Ge\lz\frac12+r+it\pz\pz\zeta(1-2r-2it)}{X^{r+it}4\zeta_{(2)}(2-2r-2it)}\\
			&+O(\lvert t\rvert ^{\epsilon}X^{1-2r+\epsilon})
			\Bigg\}dt.
		\end{aligned}
	\end{equation}
	Using Lemma \ref{Lemma size of h}, we can bound the error term by \begin{equation}\label{Error in one-level density}
		\begin{aligned}
			& X^{1-2r+\epsilon}\int_{-X}^{X}h\lz\frac{\log X}{2\pi i}(r+it)\pz\lvert t\rvert ^{\epsilon}dt\ll X^{1-2r+ar+\epsilon},
		\end{aligned}
	\end{equation} and this is $o(X)$ as long as $a<2$. Setting $r=1/4-\epsilon$ gives the error term in Corollary \ref{Corollary one-level density}. 
	
	We remark here that this is the only restriction on $a$, so any improvement of the error term in Theorem \ref{Theorem for log derivatives for squarefree moduli} would allow us to extend the support of the Fourier transform of our test functions.
	
	Now we compute the main terms of \eqref{Integral after ratios}.
	Since the integrand is regular at $r+it=0$, we can shift the line of integration to $r=0$, where the contribution of the horizontal integrals is by \eqref{Size of log derivative} and Lemma \ref{Lemma size of h}
	\begin{equation}\label{key}
		\ll X^{1+\epsilon}\int_0^{1/4}\left| h\lz\frac{\log X}{2\pi i}(r\pm iX)\pz\right|\ll_A X^{1+\epsilon+a/4-A}.
	\end{equation} We extend the range of integration from $-\infty$ to $\infty$ introducing an error of size $X^{\epsilon}$, and get
	\begin{equation}\label{Main term in one-level density}
		\begin{aligned}
			&\frac X{\pi}\int_{-\infty}^{\infty}h\lz\frac{\log X}{2\pi}t\pz\Bigg\{\frac{2\M f(1)}{3\zeta(2)}\lz\frac{\zeta'(1+2it)}{\zeta(1+2it)}\pz+\sum_{p>2}\frac{\log p}{(p+1)(p^{1+2it}-1)}\\
			&-\frac{X^{-it}\M f(1-it)\pi^{it}\lz\Go(1/2+it)+\Ge(1/2+it)\pz\zeta(1-2it)}{4\zeta_{(2)}(2-2it)}
			\Bigg\}dt\\
			&\hspace{-15pt}=\frac{2X}{\log X}\int_{-\infty}^{\infty}h(u)\Bigg\{\frac{2\M f(1)}{3\zeta(2)}\lz\frac{\zeta'}{\zeta}\lz1+\frac{4\pi i u}{\log X}\pz+\sum_{p>2}\frac{\log p}{(p+1)(p^{1+\frac{4\pi iu}{\log X}}-1)}\pz\\	
			&-\frac{\M f\lz1-\frac{2\pi i u}{\log X}\pz\pi^{\frac{2\pi i u}{\log X}}\lz\Go\lz\frac{1}{2}+\frac{2\pi i u}{\log X}\pz+\Ge\lz\frac{1}{2}+\frac{2\pi i u}{\log X}\pz\pz\zeta\lz1-\frac{4\pi i u}{\log X}\pz}{e^{2\pi i u}\cdot4\zeta_{(2)}\lz2-\frac{4\pi i u}{\log X}\pz}
			\Bigg\}du.
		\end{aligned}
	\end{equation} 
	
	We now compute the second integral in \eqref{One-level density as integral}. We shift the line of integration to $c=1/2$ and substitute $\frac{\log X}{2\pi i}\lz s-\frac12\pz=u$, getting \begin{equation}\label{Second integral in one-level density}
		\frac{1}{\log X}\int_{-\infty}^{\infty}h(u)\sum_{\substack{n\geq 1,\\n\odd}}\mu^2(n)f\bfrac nX\lz\log\bfrac \pi n-\frac{\Gamma_n'}{\Gamma_n}\lz\frac12-\frac{2\pi i u}{\log X}\pz\pz du.
	\end{equation} 
	The first part of Corollary \ref{Corollary one-level density} now follows from  \eqref{Error in one-level density} with $r=1/4-\epsilon$, \eqref{Main term in one-level density}, \eqref{Second integral in one-level density} and noticing that a similar computation as in Lemma \ref{Lemma character sum} gives
	\begin{equation}\label{average of gamma factors}
		\begin{aligned}
			&\sum_{\substack{n\geq1,\\n\odd}}\frac{\Gamma'_n(1-s)}{\Gamma_n(1-s)}\mu^2(n)f(n/X)=\\
			&=\frac{\Ge'(1-s)}{\Ge(1-s)}\sum_{n\equiv1\mod 4}\mu^2(n)f(n/X)+\frac{\Go'(1-s)}{\Go(1-s)}\sum_{n\equiv 3\mod 4}\mu^2(n)f(n/X)=\\
			&=\lz\frac{\Ge'(1-s)}{\Ge(1-s)}+\frac{\Go'(1-s)}{\Go(1-s)}\pz\lz\frac{X\M f(1)}{3\zeta(2)}+O(\sqrt X)\pz.
		\end{aligned}
	\end{equation}
	
	To prove the second part, we use the Laurent expansions $\zeta(s)=\frac1{s-1}+\dots$, $\frac{\zeta'}{\zeta}(s)=\frac{-1}{s-1}+\dots$, so \eqref{Main term in one-level density} is 
	\begin{equation}\label{First integral in one-level density - result}
		\frac{2 X \M f(1)}{3\zeta(2)}\int_{-\infty}^{\infty}h(u)\lz\frac{e^{-2\pi iu}-1}{2\pi iu}\pz du+O\bfrac{X}{\log X}.
	\end{equation}

	We have \begin{equation}\label{key}
		\sum_{\substack{n\geq 1,\\n\odd}}\mu^2(n)f\bfrac nX\log\bfrac \pi n=\frac{-2 \M f(1)X\log X}{3\zeta(2)}+O(X),
	\end{equation}
	so by \eqref{average of gamma factors}, \eqref{Second integral in one-level density} is
	\begin{equation}\label{Second integral in one-level density - result}
		-\frac{2 X\M f(1)}{3\zeta(2)}\int_{-\infty}^\infty h(u)du+ O\bfrac{X}{\log X}.
	\end{equation}
	
	Altogether, we obtain
	\begin{equation}\label{key}
		D(X;h)=\int_{-\infty}^{\infty} h(u)\lz1+\frac{e^{-2\pi i u}-1}{2\pi i u}\pz du+O\bfrac1{\log X}.
	\end{equation}
	
	Finally, to see that this result agrees with the density conjecture of Katz and Sarnak, we use the fact that $h(u)$ is even. We can thus drop the last term in the integral and replace $\frac{e^{-2\pi iu}}{2\pi iu}$ by \begin{equation}\label{key}
		\frac{1}{2}\lz\frac{e^{-2\pi iu}-e^{2\pi iu}}{2\pi iu}\pz=\frac{-\sin(2\pi u)}{2\pi u},
	\end{equation} which completes the proof.
	\appendix
	\section{\bf{Proof of the functional equation from Proposition \ref{Functional equation with Gauss sums}}}
	We now give the proof of the functional equation in Proposition \ref{Functional equation with Gauss sums}. We only give the details in the case of even characters, and then explain the usual modification for odd characters. For a detailed proof of the classical functional equation, see for example \cite{Gar}.
	
	Let $\chi$ be an even character modulo $q$. We define two theta functions
	\begin{equation}\label{key}
		\theta_\chi(y)=\sum_{n\in\Z}\chi(n)e^{-\pi n^2y},
	\end{equation} and 
	\begin{equation}\label{key}
		\theta_{\tau(\chi)}(y)=\sum_{n\in\Z}\tau(\chi,n)e^{-\pi n^2y}.
	\end{equation}
	Then using the fact that $\chi(n)$ is even, we have the integral representation of the L-function:
	\begin{equation}\label{key}
		\int_{0}^{\infty}y^{\frac s2}\frac{\theta_\chi(y)}{2}\frac{dy}{y}=\sum_{n\geq1}\chi(n)\int_{0}^{\infty}y^{\frac s2}e^{-\pi n^2y}\frac{dy}{y}=\pi^{-\frac s2}\Gamma\bfrac s2 L(s,\chi),
	\end{equation} and similarly
	\begin{equation}\label{key}
		\int_{0}^{\infty}y^{\frac s2}\frac{\theta_{\tau(\chi)}(y)}{2}\frac{dy}{y}=%\sum_{n\geq1}\tau(\chi,n)\int_{0}^{\infty}y^{\frac s2}e^{-\pi n^2y}\frac{dy}{y}=
		\pi^{-\frac s2}\Gamma\bfrac s2 K(s,\chi).
	\end{equation}
	These integrals converge absolutely for $s$ with $\re(s)>0.$
	
	The two theta functions are related by the following functional equation, which follows after an application of the Poisson summation.
	\begin{lemma}
		Let $\chi$ be a character modulo $q$. Then
		\begin{equation}\label{functional equation for theta}
			\theta_{\chi}(y)=\frac{1}{q\sqrt y}\theta_{\tau(\chi)}\bfrac{1}{yq^2}.
		\end{equation}
	\end{lemma}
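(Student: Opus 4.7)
The plan is to apply Poisson summation residue class by residue class modulo $q$. Using periodicity of $\chi$, I would split $n = qm+j$ to write
\begin{equation*}
\theta_\chi(y) = \sum_{j\mod{q}}\chi(j)\sum_{m\in\Z} e^{-\pi(qm+j)^2 y}.
\end{equation*}
The inner sum is a shifted, rescaled Gaussian summed over $\Z$, and Poisson summation (together with the fact that the Fourier transform of $e^{-\pi y x^2}$ is $y^{-1/2}e^{-\pi \xi^2/y}$, plus the usual scaling and translation rules) converts it into
\begin{equation*}
\frac{1}{q\sqrt y}\sum_{k\in\Z} e(jk/q)\, e^{-\pi k^2/(q^2 y)}.
\end{equation*}

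Substituting this back and exchanging the order of the two outer sums, which is justified by the absolute convergence coming from Gaussian decay, the sum over $j\mod{q}$ collapses to the Gauss sum $\tau(\chi,k) = \sum_{j\mod{q}}\chi(j)\, e(jk/q)$. What remains is exactly $\frac{1}{q\sqrt y}\,\theta_{\tau(\chi)}(1/(yq^2))$, as claimed.

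There is no real obstacle beyond keeping the Fourier normalization straight. For odd $\chi$ the stated identity is trivial, since both $\theta_\chi(y)$ and $\theta_{\tau(\chi)}(\cdot)$ vanish identically (the summands are odd in the summation variable); the ``usual modification'' alluded to in the proof sketch of Proposition \ref{Functional equation with Gauss sums} instead uses the theta function $\sum_{n\in\Z} n\chi(n) e^{-\pi n^2 y}$, and applies Poisson summation to $x\, e^{-\pi(qx+j)^2 y}$. The extra factor of $k$ produced on the dual side is precisely what accounts for $\Go$ in place of $\Ge$ in the L-function functional equation.
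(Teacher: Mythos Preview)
Your proof is correct and follows essentially the same route as the paper: split into residue classes modulo $q$, apply Poisson summation to the shifted Gaussian, then swap the sums to recognize the Gauss sum $\tau(\chi,k)$. Your remark on the odd case (both sides vanish identically, and the nontrivial identity comes from the $n$-weighted theta function) also matches the paper's treatment.
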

	\begin{proof}
		We have 
		\begin{equation}\label{key}
			\theta_{\chi}(y)=\sum_{n\in\Z}\chi(n)e^{-\pi n^2y}=\sum_{j\mod q}\chi(j)\sum_{n\in\Z}e^{-\pi(qn+j)^2y}.
		\end{equation}
		The Fourier transform of the function in the inner sum is
		\begin{equation}\label{key}
			\begin{aligned}
				\int_{-\infty}^{\infty} e^{-\pi(qt+j)^2y}e^{-2\pi itn}dt&=\frac{1}{q\sqrt y}\int_{-\infty}^{\infty}e^{-\pi u^2}e^{-2\pi in\lz\frac{u\sqrt y}{q}-\frac jq\pz}du\\
				&=\frac{e^{2\pi i jn/q}}{q\sqrt y}\cdot e^{\frac{-\pi n^2}{yq^2}},
			\end{aligned}
		\end{equation}
		so Poisson summation gives
		\begin{equation}\label{key}
			\begin{aligned}
				\theta{\chi}(y)&=\frac{1}{q\sqrt y}\sum_{j\mod q}\chi(j)\sum_{n\in\Z}e^{2\pi i jn/q}\cdot e^{\frac{-\pi n^2}{yq^2}}\\
				&=\frac{1}{q\sqrt y}\sum_{n\in\Z}e^{\frac{-\pi n^2}{yq^2}}\sum_{j\mod q}\chi(j)e^{2\pi ijn/q}\\
				&=\frac{1}{q\sqrt y}\sum_{n\in\Z}\tau(\chi,n)e^{\frac{-\pi n^2}{yq^2}}\\
				&=\frac{1}{q\sqrt y}\theta_{\tau(\chi)}\bfrac{1}{yq^2}.	
			\end{aligned}
		\end{equation}
	\end{proof}
	Substituting $y\mapsto\frac{1}{yq^2}$ and using relation \eqref{functional equation for theta}, we obtain
	\begin{equation}\label{key}
		\begin{aligned}
			\pi^{-\frac s2}\Gamma\bfrac s2 L(s,\chi)&=\int_{0}^{\infty}y^{\frac s2}\frac{\theta_\chi(y)}{2}\frac{dy}{y}\\
			&=\int_{0}^{\infty}\bfrac1{yq^2}^{s/2}\frac{\theta_{\chi}\bfrac{1}{yq^2}}{2}\frac{dy}{y}\\
			&=q^{-s}\int_{0}^{\infty}y^{\frac{1-s}{2}}\frac{\theta_{\tau(\chi)}(y)}{2}\frac{dy}{y}\\
			&=\frac{\pi^{\frac{s-1}{2}}}{q^s}\Gamma\bfrac{1-s}{2}K(1-s,\chi),
		\end{aligned}
	\end{equation}
	which holds for $s$ with $0<\re(s)<1$. However, since the left-hand side has a meromorphic continuation to the whole complex plane, the functional equation holds for all $s\in \C$. 
	This finishes the proof for even characters. 
	
	For odd characters, the functions $\theta_\chi(y),\theta_{\tau(\chi)}(y)$ are identically 0, so we work with the following functions instead:
	\begin{equation}\label{Theta function for odd characters}
		\tilde\theta_\chi(y)=\sum_{n\in\Z}n\chi(n)e^{-\pi n^2y},
	\end{equation} and
	\begin{equation}\label{Theta function for odd characters with Gauss sums}
		\tilde\theta_\chi(y)=\sum_{n\in\Z}n\tau(\chi,n)e^{-\pi n^2y}.
	\end{equation} The sign in the resulting functional equation comes from the fact that the function $f(x)=xe^{-\pi x^2}$ is an eigenfunction of the Fourier transform with eigenvalue $-i$.

	\section{\bf{Ratios conjecture predictions for the family $\leg{\cdot}{n}$ for odd square-free $n$}}
	In this section, we follow the recipe of Conrey, Farmer and Zirnbauer for the family of Dirichlet characters $\chi_n=\leg \cdot n$ with $n$ odd and square-free, which is slightly different from the family of $\leg{d}{\cdot}$ for positive fundamental discriminants $d$ usually considered in the literature. In our case, the characters $\chi_n$ are primitive and even if $n\equiv1\mod 4$, or odd if $n\equiv3\mod 4$. We therefore split the family according to the parity of the character as
	\begin{equation}\label{Ratios}
		\begin{aligned}
			&\sum_{\substack{n\leq X,\\n \odd}}\frac{\mu^2(n)L(1/2+\alpha,\chi_n)}{L(1/2+\beta,\chi_n)}\\
			&=\sum_{\substack{n\leq X,\\ n\equiv1\mod 4}}\frac{\mu^2(n)L(1/2+\alpha,\chi_n)}{L(1/2+\beta,\chi_n)}+\sum_{\substack{n\leq X,\\ n\equiv3\mod 4}}\frac{\mu^2(n)L(1/2+\alpha,\chi_n)}{L(1/2+\beta,\chi_n)}.
		\end{aligned}
	\end{equation} The approximate functional equation is 
	\begin{equation}\label{key}
		L(1/2+\alpha,\chi_n)\approx\sum_{m\leq x}\frac{\chi_n(m)}{m^{1/2+\alpha}}+\bfrac{n}{\pi}^{-\alpha}\Gamma_{e/o}(1/2+\alpha)\sum_{m\leq y}\frac{\chi_n(m)}{m^{1/2-\alpha}},
	\end{equation} where $xy\approx n/2\pi$, and $G_{e/o}$ is $\Ge$ or $\Go$, depending on the parity of $\chi_n$. 
	
	We now consider the first sum in \eqref{Ratios}. We write the numerator using the approximate functional equation and the denominator as a Dirichlet series. The first part of the functional equation then gives
	\begin{equation}\label{First part of ratios conjecture}
		\sum_{\substack{n\leq X,\\n\equiv1\mod 4}}\mu^2(n)\sum_{m\leq x}\frac{\chi_n(m)}{m^{1/2+\alpha}}\sum_{k\geq1}\frac{\mu(k)\chi_n(k)}{k^{1/2+\beta}}.
	\end{equation} We extend the sum over $m$ all the way to infinity, and assume that the main contribution comes from the diagonal terms, which is computed in the following lemma:
	\begin{lemma}\label{Lemma character sum}
		For $b=1$ or $3$, we have 
		\begin{equation}\label{key}
			\begin{aligned}
				\sum_{\substack{n\leq X,\\ n\equiv b\mod 4}}\mu^2(n)\chi_n(\l)=\begin{cases}
					\frac{X}{2\zeta(2)}a(4\l)+\text{small}&\hbox{if $\l=\square$},\\\text{small}&\hbox{if $\l\neq\square$,}
				\end{cases}
			\end{aligned}
		\end{equation} where 
		\begin{equation}\label{key}
			a(k)=\prod_{p\mid k}\frac{p}{p+1}.
		\end{equation}
	\end{lemma}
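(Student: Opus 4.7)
The plan is to remove the squarefree indicator via $\mu^2(n) = \sum_{d^2 \mid n} \mu(d)$ and write $n = d^2 m$ with $d$ and $m$ both odd. Multiplicativity of the Jacobi symbol gives $\leg{\l}{d^2 m} = \1_{(\l,d) = 1} \leg{\l}{m}$, and since $d^2 \equiv 1 \mod 4$ for odd $d$, the congruence $n \equiv b \mod 4$ reduces to $m \equiv b \mod 4$. Thus
\begin{equation*}
\sum_{\substack{n \leq X,\\ n \equiv b \mod 4}} \mu^2(n)\chi_n(\l) = \sum_{\substack{d \odd,\\ (\l,d) = 1}} \mu(d) \sum_{\substack{m \leq X/d^2,\\ m \equiv b \mod 4}} \leg{\l}{m},
\end{equation*}
and the behaviour of the inner sum splits sharply depending on whether $\l$ is a perfect square.

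When $\l = k^2$, one has $\leg{\l}{m} = \1_{(m,k) = 1}$ for odd $m$, so the inner sum counts $m \leq X/d^2$ with $m \equiv b \mod 4$ and $(m, k) = 1$. An inclusion--exclusion over divisors of the odd part of $k$ evaluates this as $\frac{X}{4d^2} \prod_{p \mid k,\, p > 2}(1 - 1/p) + O(\tau(k))$. Truncating at $d \leq \sqrt X$ (the tail being negligible) and using
\begin{equation*}
\sum_{\substack{d \odd,\\ (d,k) = 1}} \frac{\mu(d)}{d^2} = \frac{4}{3\zeta(2)} \prod_{\substack{p \mid k,\\ p > 2}} \lz 1 - \frac{1}{p^2}\pz^{-1},
\end{equation*}
the main term becomes $\frac{X}{3\zeta(2)} \prod_{p \mid k,\, p > 2} \frac{p}{p+1}$ after simplification via $\frac{1-1/p}{1-1/p^2} = \frac{p}{p+1}$. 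Since $a(4\l) = \frac{2}{3} \prod_{p \mid k,\, p > 2} \frac{p}{p+1}$ when $\l = k^2$, this equals exactly $\frac{X}{2\zeta(2)} a(4\l)$, with a total error $O(X^{1/2+\epsilon})$.

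When $\l$ is not a perfect square, write $\l = \l_0 k^2$ with $\l_0 > 1$ squarefree, so that $\leg{\l}{m} = \leg{\l_0}{m} \1_{(m,k) = 1}$ for odd $m$. Detecting $m \equiv b \mod 4$ with $\psi_{\pm 1}$ writes the inner sum as a linear combination of character sums $\sum_{m \leq Y} \leg{\l_0}{m}\psi_{\pm 1}(m) \1_{(m,k) = 1}$, and by quadratic reciprocity the character $\leg{\l_0}{\cdot}\psi_{\pm 1}$ restricts on odd $m$ coprime to $k$ to a non-principal Dirichlet character modulo a divisor of $4\l$. Polya--Vinogradov then bounds the inner sum by $O(\l^{1/2+\epsilon})$, and summing over $d \leq \sqrt X$ gives a total contribution $O(\l^{1/2+\epsilon} X^{1/2+\epsilon})$, which is the promised ``small'' bound.

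The only real work is the Euler-factor bookkeeping in the square case, to verify that the product over primes dividing $k$, together with the factor $\frac{2}{3}$ accounting for the removed prime $2$, collapses to $\frac{1}{2\zeta(2)} a(4\l)$; the non-square case follows directly from Polya--Vinogradov, and the initial sieve reduction is standard.
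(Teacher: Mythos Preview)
Your argument is correct, but it takes a different route from the paper. The paper proceeds via Perron's formula: it detects $n\equiv b\pmod 4$ with $\tfrac12(\psi_1\pm\psi_{-1})$, identifies the generating Dirichlet series as $L(s,\leg{4\l}{\cdot}\psi)/L(2s,\leg{4\l}{\cdot}^2\psi^2)$, and reads off the main term $\frac{X}{2\zeta(2)}a(4\l)$ as the residue at $s=1$, which occurs only when $\l$ is a square and $\psi$ is principal. Since the lemma sits in a heuristic appendix, the paper does not quantify ``small'' at all.

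You instead open up $\mu^2(n)=\sum_{d^2\mid n}\mu(d)$, reduce to character sums over $m\le X/d^2$, and split on whether $\l$ is a square: a direct count plus Euler-product bookkeeping in the square case, and P\'olya--Vinogradov in the non-square case. This is more elementary (no contour integration) and has the advantage of producing explicit error terms $O(\l^{1/2+\epsilon}X^{1/2+\epsilon})$, whereas the paper's Perron approach would need a contour shift and $L$-function bounds to make ``small'' quantitative. The paper's method, on the other hand, makes the structure transparent: the main term is literally a residue, and the non-square case has no pole, so one sees immediately why the dichotomy arises without any case analysis on the $2$-part of $\l_0$ or on which $\psi_{\pm1}$ one is twisting by.
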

	\begin{proof}
		Since $n$ runs over odd integers, we may replace $\l$ by $4\l$. Then we have
		\begin{equation}\label{key}
			\sum_{\substack{n\leq X,\\ n\equiv b\mod 4}}\mu^2(n)\chi_n(4\l)=\sum_{n\leq X}\mu^2(n)\chi_n(4\l)\frac{\psi_1(n)\pm\psi_{-1}(n)}{2}
		\end{equation} for an appropriate sign depending on $b$.
		By Perron's formula,
		\begin{equation}\label{key}
			\sum_{n\leq X}\mu^2(n)\chi_n(4\l)\psi(n)=\frac{1}{2\pi i}\int_{(c)}A\lz s,\leg{4\l}{\cdot}\psi\pz\frac{X^s}{s}ds,
		\end{equation} where 
		\begin{equation}\label{key}
			\begin{aligned}
				A\lz s,\leg{4\l}{\cdot}\psi\pz&=\sum_{n=1}^{\infty}\frac{\mu^2(n)\leg{4\l}{n}\psi(n)}{n^s}=\\
				%&=\prod_{p}\lz1+\frac{\leg{4\l}{p}\psi(p)}{p^s}\pz=\\
				%&=\prod_p\lz1-\frac{\leg{4\l}{p}^2\psi(p)^2}{p^{2s}}\pz\lz1-\frac{\leg{4\l}{p}\psi(p)}{p^s}\pz^{-1}=\\
				&=\frac{L\lz s,\leg{4\l}{\cdot}\psi\pz}{L\lz 2s,\leg{4\l}{\cdot}^2\psi^2\pz}.
			\end{aligned}
		\end{equation}
		$A\lz s,\leg{4\l}{\cdot}\psi\pz$ has a pole at $s=1$ only if $\l=\square$ and $\psi=\psi_1$ is the principal character, in which case we have
		\begin{equation}\label{key}
			A\lz s,\leg{4\l}{\cdot}\psi\pz=\frac{\zeta(s)}{\zeta(2s)}\prod_{p\mid 4\l}\lz1-\frac{1}{p}\pz\lz1-\frac1{p^2}\pz^{-1}=\frac{\zeta(s)}{\zeta(2s)}a(4\l).
		\end{equation} Therefore the residue at $s=1$ is $\frac{a(4\l)}{\zeta(2)},$ which gives the main term \begin{equation}\label{key}
			\frac{X}{2\zeta(2)}a(4\l).
		\end{equation}
	\end{proof}
	
	By Lemma \ref{Lemma character sum}, the main term of \eqref{First part of ratios conjecture} should be 
	\begin{equation}\label{key}
		\frac{X}{2\zeta(2)}\sum_{mk=\square}\frac{\mu(k)a(4mk)}{m^{1/2+\alpha}k^{1/2+\beta}}.
	\end{equation}
	We split the sum depending on the parity of $mk$ and expand into an Euler product:
	\begin{equation}\label{RCC}
		\begin{aligned}
			&\sum_{mk=\square}\frac{\mu(k)a(4mk)}{m^{1/2+\alpha}k^{1/2+\beta}}=\frac23\sum_{\substack{mk=\square,\\mk\odd}}\frac{\mu(k)a(mk)}{m^{1/2+\alpha}k^{1/2+\beta}}+\sum_{\substack{mk=\square,\\mk\even}}\frac{\mu(k)a(mk)}{m^{1/2+\alpha}k^{1/2+\beta}}\\
			&=\frac23\prod_{p>2}\sum_{m+k\even}\frac{\mu(p^k)a(p^{m+k})}{p^{m(1/2+\alpha)+k(1/2+\beta)}}\\
			&+\lz\sum_{\substack{m+k\even,\\m+k\geq1}}\frac{\mu(2^k)a(2^{m+k})}{2^{m(1/2+\alpha)+k(1/2+\beta)}}\pz\prod_{p>2}\sum_{m+k\even}\frac{\mu(p^k)a(p^{m+k})}{p^{m(1/2+\alpha)+k(1/2+\beta)}}\\
			&=\frac23\lz1+\sum_{\substack{m+k\even,\\m+k\geq1}}\frac{\mu(2^k)}{2^{m(1/2+\alpha)+k(1/2+\beta)}}\pz\prod_{p>2}\sum_{m+k\even}\frac{\mu(p^k)a(p^{m+k})}{p^{m(1/2+\alpha)+k(1/2+\beta)}}.
		\end{aligned}
	\end{equation} The product over $p>2$ is the same as in the ratios conjecture for fundamental discriminants, so using (2.25) in \cite{CoSn}, it equals \begin{equation}\label{Euler product evaluation}
		\frac{2}{3}\cdot\frac{\zeta_{(2)}(1+2\alpha)}{\zeta_{(2)}(1+\alpha+\beta)}\cdot P_{D,2}(\alpha,\beta),
	\end{equation} where 
	\begin{equation}\label{key}
		P_{D,2}(\alpha,\beta)=\prod_{p>2}\lz1+\frac{p^{\alpha-\beta}-1}{p^{\alpha-\beta}(p+1)(p^{1+\alpha+\beta}-1)}\pz
	\end{equation}
	The remaining factor in \eqref{RCC} is 
	\begin{equation}\label{key}
		\begin{aligned}
			&1+\sum_{\substack{m\geq1,\\m\even}}\frac{1}{2^{m(1/2+\alpha)}}-\sum_{\substack{m\geq0,\\m\odd}}\frac{1}{2^{m(1/2+\alpha)+1/2+\beta}}\\
			%&=1+\frac{1}{2^{1+2\alpha}-1}-\frac{1}{2^{1+\alpha+\beta}}\cdot\frac{1}{1-\frac1{2^{1+2\alpha}}}=\\
			&=\lz1-\frac{1}{2^{1+\alpha+\beta}}\pz\lz1-\frac1{2^{1+2\alpha}}\pz^{-1},
		\end{aligned}
	\end{equation} so it recovers the missing factors in zeta's in \eqref{Euler product evaluation}.
	Therefore the first main term for $n\equiv1\mod 4$ is
	\begin{equation}\label{key}
		\frac{X}{3\zeta(2)}\frac{\zeta(1+2\alpha)}{\zeta(1+\alpha+\beta)}P_{D,2}(\alpha,\beta).
	\end{equation}
	The second part of the functional equation contributes
	\begin{equation}\label{key}
		\Ge(1/2+\alpha)\sum_{n\leq X}\mu^2(n)\bfrac{n}{\pi}^{-\alpha}\sum_{mk=\square}\frac{\mu(k)a(4mk)}{m^{1/2-\alpha}k^{1/2+\beta}}.
	\end{equation} The inner sum is similar as above with $\alpha$ replaced by $-\alpha$, so this term gives
	\begin{equation}\label{key}
		\frac{X^{1-\alpha}}{(1-\alpha)3\zeta(2)}\cdot\frac{\pi^\alpha \Ge(1/2+\alpha)\zeta(1-2\alpha)}{\zeta(1-\alpha+\beta)}P_{D,2}(-\alpha,\beta).
	\end{equation}
	
	Finally, the computation for $n\equiv3\mod 4$ will be similar with $\Ge$ replaced by $\Go$ in the second main term, so we obtain
	\begin{conjecture} Let $-1/4<\re(\alpha)<1/4$, $\frac1{\log X}\ll\re(\beta)<1/4,$ and $\im(\alpha),\im(\beta)\ll X^{1-\epsilon}$. Then
		\begin{equation}\label{key}
			\begin{aligned}
				&\sum_{\substack{n\leq X,\\n\odd}}\frac{\mu^2(n)L(1/2+\alpha,\chi_n)}{L(1/2+\beta,\chi_n)}=\frac{2X}{3\zeta(2)}\frac{\zeta(1+2\alpha)}{\zeta(1+\alpha+\beta)}P_{D,2}(\alpha,\beta)\\
				&+\frac{X^{1-\alpha}\pi^{\alpha}\lz\Ge\lz\frac12+\alpha\pz+\Go\lz\frac12+\alpha\pz\pz\zeta(1-2\alpha)}{(1-\alpha)3\zeta(2)\zeta(1-\alpha+\beta)} P_{D,2}(-\alpha,\beta)\\
				&+O(X^{1/2+\epsilon}).
			\end{aligned}
		\end{equation}
	\end{conjecture}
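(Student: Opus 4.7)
The plan is to follow the Conrey--Farmer--Zirnbauer recipe adapted to the family $\{\chi_n : n \text{ odd, squarefree}\}$. Since $\chi_n$ is even when $n\equiv 1 \pmod 4$ and odd when $n\equiv 3 \pmod 4$, I would first split the sum according to the parity of $n$ modulo $4$ as in \eqref{Ratios}; the only structural difference between the two subfamilies is the gamma factor $\Ge(1/2+\alpha)$ or $\Go(1/2+\alpha)$ appearing in the functional equation of $L(1/2+\alpha,\chi_n)$.

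For each parity class, I would replace $L(1/2+\alpha,\chi_n)$ by its approximate functional equation, giving two pieces: a ``short'' sum $\sum_{m\leq x}\chi_n(m) m^{-1/2-\alpha}$ and a ``dual'' sum $(n/\pi)^{-\alpha}\Gamma_{e/o}(1/2+\alpha)\sum_{m\leq y}\chi_n(m)m^{-1/2+\alpha}$. In the denominator, I expand $1/L(1/2+\beta,\chi_n) = \sum_k \mu(k)\chi_n(k)k^{-1/2-\beta}$. Following the recipe, I extend all $m$-sums to infinity, interchange the order of summation, and apply Lemma~\ref{Lemma character sum} to the inner sum over $n$, retaining only the diagonal contribution $mk=\square$. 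This is the central heuristic — and the main obstacle to making the argument rigorous — since the off-diagonal character sums are simply assumed to be absorbed into the error without detailed justification.

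The diagonal step produces two arithmetic sums, the first in the shifts $(\alpha,\beta)$ and the second (coming from the dual piece) in $(-\alpha,\beta)$, each of the form $\sum_{mk=\square}\mu(k)a(4mk)m^{-1/2-\alpha}k^{-1/2-\beta}$. I would split the sum by the parity of $mk$, treating $p=2$ by hand and the odd primes via Euler product. As the paper computes in \eqref{RCC}, the $2$-factor is a short geometric series that collapses to exactly the missing local factors at $p=2$, converting the odd-part ratio $\zeta_{(2)}(1+2\alpha)/\zeta_{(2)}(1+\alpha+\beta)$ back into the full zeta ratio (and similarly for the dual piece). The remaining product over odd primes is identical to the one that arises in the fundamental-discriminant family, so I would invoke equation $(2.25)$ of \cite{CoSn} directly to identify it with $P_{D,2}(\alpha,\beta)$.

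Finally, I would recombine both parity classes. The leading main terms add with equal weight (each residue class contributes half the density $X/2\zeta(2)$ in Lemma \ref{Lemma character sum}), producing the overall $\frac{2X}{3\zeta(2)}$ prefactor. The secondary main terms combine to give the sum $\Ge(1/2+\alpha)+\Go(1/2+\alpha)$ multiplying a single expression in $(-\alpha,\beta)$. The error $O(X^{1/2+\epsilon})$ is the standard CFZ prediction and is conjecturally sharp. The genuine hard part — as always with the ratios conjecture — is not the explicit Euler product manipulation but rather the step of discarding all off-diagonal contributions in the character sum; this is why the final statement is presented as a conjecture even though it matches Theorem \ref{Theorem for log derivatives for squarefree moduli} after differentiating in $\alpha$ and setting $\alpha=\beta=r$.
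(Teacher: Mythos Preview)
Your proposal is correct and follows essentially the same heuristic derivation as the paper: split by the parity of $n$ modulo $4$, insert the approximate functional equation and the Dirichlet expansion of $1/L$, keep only the diagonal $mk=\square$ via Lemma~\ref{Lemma character sum}, evaluate the resulting arithmetic sum by separating the factor at $p=2$ and quoting (2.25) of \cite{CoSn} for the odd primes, and then recombine the two parity classes to produce $\Ge+\Go$ in the secondary term. The only point worth flagging is that the prefactor $\frac{2X}{3\zeta(2)}$ does not arise as ``twice $\frac{X}{2\zeta(2)}$'' from the two residue classes; rather, each class already contributes $\frac{X}{2\zeta(2)}\cdot\frac{2}{3}$ because the factor $\frac{2}{3}$ comes out of the Euler-product manipulation at $p=2$ in \eqref{RCC}--\eqref{Euler product evaluation}, and the two classes then add.
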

	
	To obtain an asymptotic for the sum of logarithmic derivatives, we differentiate with respect to $\alpha,$ and set $\alpha=\beta=r$. 
	
	For the first term, we have
	\begin{equation}\label{key}
		\begin{aligned}
			&\frac{d}{d\alpha}\frac{\zeta(1+2\alpha)}{\zeta(1+\alpha+\beta)}P_{D,2}(\alpha,\beta)\Bigg\vert_{\alpha=\beta=r}
			%&=P_{D,2}(r,r)\lz\frac{2\zeta'(1+2r)}{\zeta(1+2r)}-\frac{\zeta(1+2r)\zeta'(1+2r)}{\zeta(1+2r)^2}\pz+P'_{D,2}(r,r)=\\
			=\frac{\zeta'(1+2r)}{\zeta(1+2r)}+P'_{D,2}(r,r),
		\end{aligned}
	\end{equation} where we noted that $P_{D,2}(r,r)=1$, and
	\begin{equation}\label{key}
		P'_{D,2}(r,r)=\sum_{p>2}\frac{\log p}{(p+1)(p^{1+2r}-1)},
	\end{equation} so the contribution of the first term is 
	\begin{equation}\label{key}
		\frac{2X}{3\zeta(2)}\lz\frac{\zeta'(1+2r)}{\zeta(1+2r)}+\sum_{p>2}\frac{\log p}{(p+1)(p^{1+2r}-1)}\pz.
	\end{equation}
	For the second term, we notice that only one term in the derivative survives due to the factor $\frac{1}{\zeta(1-\alpha+\beta)}$, so it equals
	\begin{equation}\label{key}
		-\frac{X^{1-r}\pi^{r}\lz\Ge\lz\frac12+r\pz+\Go\lz\frac12+r\pz\pz\zeta(1-2r)}{(1-r)3\zeta(2)} P_{D,2}(-r,r).
	\end{equation} We are thus led to the following conjecture:
	\begin{conjecture} For $\frac1{\log X}\ll\re(r)\ll\frac14,$ $\im(r)\ll X^{1-\epsilon}$, we have
		\begin{equation}\label{key}
			\begin{aligned}
				&\sum_{n\leq X}\frac{\mu^2(n)L'(1/2+r,\chi_n)}{L(1/2+r,\chi_n)}\\
				&=\frac{2X}{3\zeta(2)}\lz\frac{\zeta'(1+2r)}{\zeta(1+2r)}+\sum_{p>2}\frac{\log p}{(p+1)(p^{1+2r}-1)}\pz\\
				&-\frac{X^{1-r}\pi^{r}\lz\Ge\lz\frac12+r\pz+\Go\lz\frac12+r\pz\pz\zeta(1-2r)}{(1-r)3\zeta(2)} P_{D,2}(-r,r)\\
				&+O(X^{1/2+\epsilon}).
			\end{aligned}
		\end{equation}
	\end{conjecture}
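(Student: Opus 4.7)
The plan is to reduce the sum over squarefree odd $n$ to a sum over all odd $n$ via Möbius inversion and then apply Theorem \ref{Theorem for log derivatives}, at the cost of dealing with correction terms from non-primitive Euler factors. Using $\mu^2(n)=\sum_{d^2\mid n}\mu(d)$ I will rewrite the left-hand side as
\begin{equation*}
\sum_{\substack{d\geq 1,\\ d\odd}}\mu(d)\sum_{\substack{n\geq 1,\\ n\odd}}\frac{L'(1/2+r,\chi_{nd^2})}{L(1/2+r,\chi_{nd^2})}f\!\lz\frac{nd^2}{X}\pz,
\end{equation*}
and then, using $L(s,\chi_{nd^2})=L(s,\chi_n)\prod_{p\mid d}\lz1-\chi_n(p)/p^s\pz$, take logarithmic derivatives to split the inner summand as $L'(1/2+r,\chi_n)/L(1/2+r,\chi_n)$ plus $\sum_{p\mid d}\chi_n(p)\log p/(p^{1/2+r}-\chi_n(p))$.

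For the first (principal) part I will truncate the $d$-sum at $d\le\sqrt{X}$, using the GRH bound \eqref{Size of log derivative} to control the tail (which contributes $\ll|r|^{\epsilon}X^{1/2+\epsilon}$), and then insert the asymptotic from Theorem \ref{Theorem for log derivatives} with $X$ replaced by $X/d^2$. Completing the resulting tails of the Dirichlet series over odd $d$ using $\sum_{d\odd}\mu(d)/d^2=4/(3\zeta(2))$ and $\sum_{d\odd}\mu(d)/d^{2-2r}=1/\zeta_{(2)}(2-2r)$ yields both claimed main terms, but with the prime sum inside the first main term of the form $\sum_{p>2}\log p/(p(p^{1+2r}-1))$ rather than the sum appearing in the theorem. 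The error from the sum over $\mu(d)E(X/d^2,r)$ with $d\le\sqrt{X}$ amounts to $O(|r|^\epsilon X^{1-2r+\epsilon})$ once summed against $d^{-2N(r)}$, which controls the final error.

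For the second part, I will exchange summations to land on
\begin{equation*}
\sum_{p\odd}\frac{\log p}{p^{1/2+r}}\sum_{\substack{d\geq 1,\\ d\odd}}\mu(pd)\sum_{\substack{n\geq 1,\\ n\odd}}\chi_n(p)f\!\lz\frac{nd^2p^2}{X}\pz\sum_{k=0}^{\infty}\frac{\chi_n(p)^k}{p^{k(1/2+r)}}
\end{equation*}
and split according to the parity of $k$. For even $k$, the inner sum over $n$ is a character sum for the non-principal character $\tilde\chi_p$, which via Mellin inversion and \eqref{Lindelof} shifted to $\re(u)=1/2+\epsilon$ contributes $\ll X^{1/2+\epsilon}$ per $(k,p,d)$, summing to $O(X^{1/2+\epsilon})$. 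For odd $k$, the inner sum becomes a principal character sum whose Mellin representation produces the zeta function $\zeta_{(2p)}(u)$ with residue $(1-1/p)/2$ at $u=1$; the residue gives the additional main term $-\frac{2X\M f(1)}{3\zeta(2)}\sum_{p>2}\log p/(p(p+1)(p^{1+2r}-1))$ after summing the geometric series over odd $k$, while the error from shifting to $\re(u)=1/2+\epsilon$ is again $O(X^{1/2+\epsilon})$. Combining with the prime sum from the first part via the identity
\begin{equation*}
\frac{1}{p(p^{1+2r}-1)}-\frac{1}{p(p+1)(p^{1+2r}-1)}=\frac{1}{(p+1)(p^{1+2r}-1)}
\end{equation*}
collapses the two prime sums to the single one stated in the theorem.

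The main obstacle I anticipate is the bookkeeping in the odd-$k$ case: one must track the $\mu(pd)$ weight (which kills $p\mid d$ and contributes the $p/(p+1)$ correction), the Euler factor at $2$ removed from $\zeta$, and the interaction with Theorem \ref{Theorem for log derivatives} so that the main terms and the error rate $X^{1-2r+\epsilon}$ come out consistently. Once these arithmetic factors are correctly combined with the contributions coming from $\sum_{d\odd}\mu(d)/d^{2-2r}$ and the $4/(3\zeta(2))$ factor from $\sum_{d\odd}\mu(d)/d^2$, the assembly of the two main terms and the error term is straightforward.
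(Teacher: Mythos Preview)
Your proposal reproduces, almost step for step, the paper's \emph{proof of Theorem~\ref{Theorem for log derivatives for squarefree moduli}} in Section~9: M\"obius inversion $\mu^2(n)=\sum_{d^2\mid n}\mu(d)$, the logarithmic-derivative identity for $L(s,\chi_{nd^2})$, truncation at $d\le\sqrt{X}$, insertion of Theorem~\ref{Theorem for log derivatives} for the principal part, and the even/odd-$k$ split with Mellin inversion for the correction sum. As a derivation of the two main terms this is correct and matches the paper exactly, including the identity that merges the two prime sums.

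However, the statement you were asked to address is the \emph{Conjecture} in Appendix~B, not Theorem~\ref{Theorem for log derivatives for squarefree moduli}, and here your argument has a genuine gap. The conjecture asserts an error $O(X^{1/2+\epsilon})$ uniformly for $\frac{1}{\log X}\ll\re(r)\ll\frac14$, whereas your method inherits from Theorem~\ref{Theorem for log derivatives} the error $|r|^{\epsilon}X^{N(r)+\epsilon}$ with $N(r)=\max\{1-2\re(r),\,1/2-\re(r),\,-5/2\}$; after summing over $d$ you yourself obtain $O(|r|^{\epsilon}X^{1-2\re(r)+\epsilon})$. This only reaches $X^{1/2+\epsilon}$ at the extreme $\re(r)=1/4$ and is no saving at all when $\re(r)\asymp 1/\log X$. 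So your approach proves Theorem~\ref{Theorem for log derivatives for squarefree moduli} but does not prove the Conjecture.

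The paper's own ``proof'' of this statement is not a proof at all: Appendix~B follows the Conrey--Farmer--Zirnbauer \emph{recipe}. One writes $L(1/2+\alpha,\chi_n)$ via the approximate functional equation, expands $1/L(1/2+\beta,\chi_n)$ as a Dirichlet series, \emph{discards} the off-diagonal terms (Lemma~\ref{Lemma character sum} with the non-rigorous ``small''), evaluates the diagonal $mk=\square$ as an Euler product to obtain Conjecture~B.1, and then differentiates in $\alpha$ and sets $\alpha=\beta=r$. The $O(X^{1/2+\epsilon})$ is part of the heuristic prediction, not something established. Your rigorous sieve argument is thus a completely different route from the paper's heuristic derivation; it recovers the same main terms (which is precisely the consistency check the appendix is making against Theorem~\ref{Theorem for log derivatives for squarefree moduli}) but cannot, and is not expected to, deliver the conjectural square-root error.
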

	The two main terms match our result in Theorem \ref{Theorem for log derivatives for squarefree moduli}, after using $\M f(s)\approx1/s$ and noticing that 
	\begin{equation}\label{key}
		\begin{aligned}
			\frac{P_{D,2}(-r,r)}{3\zeta(2)}&=\frac{1}{{3\zeta(2)}}\prod_{p>2}\lz1+\frac{1-p^{2r}}{(p+1)(p-1)}\pz\\
			%&=\frac{1}{{3\zeta(2)}}\prod_{p>2}\frac{p^2-p^{2r}}{p^{2}-1}=\\
			%&=\frac{1}{{3\zeta(2)}}\prod_{p>2}\lz\frac{p^2-1}{p^2-p^{2r}}\pz^{-1}=\\
			&=\frac{1}{{3\zeta(2)}}\prod_{p>2}\lz1-1/p^2\pz^{-1}\lz1-p^{2r-2}\pz\\
			%&=\frac{\zeta_{(2)}(2)}{3\zeta(2)\zeta_{(2)}(2-2r)}=\\
			&=\frac{1}{4\zeta_{(2)}(2-2r)}
		\end{aligned}
	\end{equation}
	
	\section{\bf{Multivariable complex analysis}}
	A general reference for the theory of multivariable complex analysis is \cite{Hor}.
	
	\begin{defin}
		An open set $R\subset \C^n$ is a domain of holomorphy if there are no open sets $R_1,R_2\subset\C^n$ such that $\emptyset\neq R_1\subset R\cap R_2,$ $R_2$ is connected and not contained in $R$, and for any holomorphic function $f$ on $R$, there is a function $f_2$ holomorphic on $R_2$ such that $f=f_2$ on $R_1.$ 
	\end{defin}
	Open balls $B(c,r)$ centered in $c$ of radius $r$ are domains of holomorphy.
	The following is a generalization of vertical strips in $\C^n$.
	\begin{defin}
		An open set $T\subset\C^n$ is a tube if there is an open set $U\subset\R^n$ such that $T=U+i\R^n=\{z\in\C^n:\ \re(z)\in U\}.$
	\end{defin}
	
	The following is (a generalization of) Bochner's Tube Theorem \cite{Boc}.
	\begin{theorem}
		A tube domain is a domain of holomorphy if and only if it is convex.	
	\end{theorem}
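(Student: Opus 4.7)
The plan is to approach this through the equivalence between domains of holomorphy and pseudoconvex open sets in $\C^n$ (the combined content of the Cartan--Thullen theorem in one direction and the solution of the Levi problem by Oka--Norguet--Bremermann in the other). Granting that equivalence, it suffices to show that a tube $T(U)=U+i\R^n$ is pseudoconvex if and only if $U\subset\R^n$ is convex.

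For the easier direction, suppose $U$ is convex. By the Hahn--Banach theorem, $U$ is the intersection of all open real half-spaces $H_{\xi,c}=\{x\in\R^n:\langle x,\xi\rangle<c\}$ that contain it, so $T(U)=\bigcap_{\xi,c}T(H_{\xi,c})$. Each $T(H_{\xi,c})$ is easily a domain of holomorphy, since the function
\begin{equation}
z\mapsto\frac{1}{c-\langle z,\xi\rangle}
\end{equation}
is holomorphic on $T(H_{\xi,c})$ but blows up on the boundary, so no larger domain supports it. I would then invoke the standard fact that the connected component of an intersection of domains of holomorphy is again a domain of holomorphy (a direct consequence of either holomorphic convexity or of plurisubharmonic exhaustions), and $T(U)$ is connected because $U$ is convex and nonempty. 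This handles the ``if'' direction.

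For the converse, assume $T(U)$ is a domain of holomorphy, hence pseudoconvex. By translation invariance of the tube in the imaginary direction, the boundary distance satisfies $d_{\C^n}(z,\partial T(U))=d_{\R^n}(\re(z),\partial U)$. Pseudoconvexity says $-\log d_{\C^n}(z,\partial T(U))$ is plurisubharmonic on $T(U)$. Restricting to a complex line of the form $z=x_0+iy_0+\lambda v$ with $v\in\R^n$, plurisubharmonicity forces $\lambda\mapsto -\log d_{\R^n}(x_0+\re(\lambda)v,\partial U)$ to be a subharmonic (in fact convex, since it depends only on $\re\lambda$) function. Ranging over $x_0$ and $v$ shows that $-\log d_{\R^n}(\cdot,\partial U)$ is a convex function on $U$. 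A short elementary argument then concludes that $U$ itself is convex: if $a,b\in U$ but some point $c$ on the segment $[a,b]$ lay outside $\overline{U}$, the boundary distance would drop to zero in the interior of the segment, contradicting the convexity just established.

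The main obstacle is really the analytic heavy machinery invoked in the first step (the Oka--Norguet--Bremermann solution of the Levi problem), which is doing most of the work. An alternative that avoids it entirely is Bochner's original tube argument: given a holomorphic $f$ on $T(U)$, explicitly construct its continuation to $T(\mathrm{ch}(U))$ via Cauchy--Fourier integrals along real segments connecting points of $U$, so when $U$ is already convex nothing extends. This is more self-contained but requires care with growth conditions and convergence of the relevant integral representations, which would be the technical heart of the alternative proof.
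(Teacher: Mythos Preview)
The paper does not prove this theorem; it is quoted as a known result, attributed to Bochner \cite{Boc}, with the surrounding theory referenced to H\"ormander \cite{Hor}. There is therefore no proof in the paper to compare against.

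Your outline is the standard modern route and is essentially correct, but two details deserve tightening. In the ``if'' direction, the function $z\mapsto 1/(c-\langle z,\xi\rangle)$ is singular only on the complex hyperplane $\langle z,\xi\rangle=c$, not on the full real boundary $\{\re\langle z,\xi\rangle=c\}$ of $T(H_{\xi,c})$; for a boundary point $z_0$ with $\langle \im z_0,\xi\rangle\neq 0$ your function is perfectly regular. The fix is to use, for each boundary point $z_0$, the function $z\mapsto 1/\langle z_0-z,\xi\rangle$, whose denominator has strictly positive real part on $T(H_{\xi,c})$. (More simply: $T(U)$ is itself geometrically convex in $\R^{2n}$ when $U$ is convex, and convex domains are domains of holomorphy by the supporting-hyperplane argument.) In the ``only if'' direction, your last step is slightly circular: the convexity of $-\log d_U$ has only been established along segments that already lie in $U$, so you cannot invoke it on a segment $[a,b]$ that leaves $U$; on the portion outside $U$ the function is not even defined, and a convex function on an interval is certainly allowed to blow up at an endpoint. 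The usual repair is a connectedness argument: fix $a\in U$, and show that the set $\{b\in U:[a,b]\subset U\}$ is both open and closed in $U$ by using the log-concavity bound $d_U((1-t)a+tb')\ge d_U(a)^{1-t}d_U(b')^t$ along nearby segments $[a,b']$ that do lie in $U$.
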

	We denote the convex hull of $T$ by $\hat T$. In particular, every holomorphic function on $T$ has a holomorphic continuation to $\hat T$.
	
	The following is useful in showing that some properties of holomorphic functions extend to their analytic continuations.
	\begin{theorem}\label{Preimage is domain of holomorphy}
		Let $R_1\subset\C^m,R_2\subset\C^n$ be domains of holomorphy, and $f~:~R_1~\rightarrow~\C^n$ a holomorphic map. Then \begin{equation}\label{key}
			R=f^{-1}(R_2)=\{z\in R_1:f(z)\in R_2\}
		\end{equation} is a domain of holomorphy.
	\end{theorem}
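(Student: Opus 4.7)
The plan is to prove this by exploiting the characterization of domains of holomorphy via plurisubharmonic exhaustion functions: an open set $R\subset\C^n$ is a domain of holomorphy if and only if it admits a continuous plurisubharmonic exhaustion function (this is the solution of the Levi problem, combining the easy direction of Oka--Norguet--Bremermann with Oka's theorem). Granting this, the strategy is to build such an exhaustion on $R=f^{-1}(R_2)$ out of exhaustions for $R_1$ and $R_2$.

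First I would pick continuous plurisubharmonic exhaustion functions $\phi_1$ on $R_1$ and $\phi_2$ on $R_2$, which exist by hypothesis. Since $f:R_1\to\C^n$ is holomorphic and $\phi_2$ is plurisubharmonic on $R_2$, the pullback $\phi_2\circ f$ is plurisubharmonic on $R=f^{-1}(R_2)$ (plurisubharmonicity pulls back under holomorphic maps). Define
\begin{equation}
\Phi(z)=\max\bigl(\phi_1(z),\,\phi_2(f(z))\bigr),\qquad z\in R.
\end{equation}
The maximum of two continuous plurisubharmonic functions is continuous and plurisubharmonic, so $\Phi$ has the right analytic character on $R$.

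The main content is then to verify that $\Phi$ is an exhaustion, i.e.\ that $\{z\in R:\Phi(z)\le c\}$ is compact in $R$ for every $c\in\R$. This sublevel set equals $\{\phi_1\le c\}\cap f^{-1}\bigl(\{\phi_2\le c\}\bigr)$. Take any sequence $z_k$ in this set; since $\phi_1$ is an exhaustion on $R_1$, the $z_k$ lie in a compact subset of $R_1$, so after passing to a subsequence $z_k\to z\in R_1$. By continuity of $f$, $f(z_k)\to f(z)$; but $f(z_k)$ lies in the compact set $\{\phi_2\le c\}\subset R_2$, so the limit $f(z)$ lies in $R_2$, whence $z\in R$. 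This is where the hypothesis that $R_2$ is itself a domain of holomorphy (giving the exhaustion $\phi_2$ with compact sublevel sets inside $R_2$) is essential.

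The main obstacle, and the only subtle point, is precisely this compactness argument: without the exhaustion property on $R_2$, limit points of sequences in $R$ could drift to $\partial R_2$ under $f$ and thereby escape $R$. Once $\Phi$ is known to be a continuous plurisubharmonic exhaustion, pseudoconvexity of $R$ follows immediately, and invoking the Levi problem gives that $R$ is a domain of holomorphy. (A parallel argument via holomorphic convexity and the Cartan--Thullen theorem is possible: one shows that for any compact $K\subset R$, its holomorphic hull $\hat K_R$ is contained in $\hat K_{R_1}\cap f^{-1}(\widehat{f(K)}_{R_2})$, which is compact in $R$ since both $R_1$ and $R_2$ are holomorphically convex; I would present the PSH-exhaustion proof as it is shorter and more transparent.)
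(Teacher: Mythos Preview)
The paper does not actually prove this theorem; it is stated in Appendix~C as a known fact from several complex variables, with H\"ormander's book cited as a general reference, and is then invoked as a black box in the proof of Proposition~\ref{Extending inequalities}. So there is no paper proof to compare against.

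Your argument via continuous plurisubharmonic exhaustion functions is correct and is one of the standard proofs of this result. The verification that $\Phi=\max(\phi_1,\phi_2\circ f)$ is an exhaustion is exactly the point where both hypotheses are used, and your sequential compactness argument handles it cleanly: compactness of $\{\phi_1\le c\}$ in $R_1$ gives a subsequential limit $z\in R_1$, and compactness of $\{\phi_2\le c\}$ in $R_2$ forces $f(z)\in R_2$, hence $z\in R$. The only remark I would add is that invoking the solution of the Levi problem is heavy machinery for a statement the paper treats as background; the holomorphic-convexity route you sketch at the end (via Cartan--Thullen, showing $\widehat{K}_R\subset \widehat{K}_{R_1}\cap f^{-1}\bigl(\widehat{f(K)}_{R_2}\bigr)$) is more elementary and is closer to how this appears in H\"ormander, so if you want to keep the appendix self-contained at the level of the rest of the paper you might prefer to lead with that version instead.
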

	
	The following proposition is used to estimate the size of the meromorphic continuations of our triple Dirichlet series in vertical strips.
	\begin{prop}\label{Extending inequalities}
		Assume that $T\subset \C^n$ is a tube domain, $g,h:T\rightarrow \C$ are holomorphic functions, and let $\tilde g,\tilde h$ be their holomorphic continuation to $\hat T$. If  $\lvert g(z)\rvert \leq \lvert h(z)\rvert $ for all $z\in T$, and $h(z)$ is nonzero in $T$, then also $\lvert \tilde g(z)\rvert \leq \lvert \tilde h(z)\rvert $ for all $z\in \hat T$.
	\end{prop}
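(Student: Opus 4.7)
My plan is to reduce the inequality $|\tilde g|\le|\tilde h|$ to a non-vanishing statement: I will show that for every $c\in\C$ with $|c|>1$, the extension $\tilde g - c\tilde h$ has no zeros on $\hat T$. Granted this, at any point $z\in\hat T$ with $\tilde h(z)\neq 0$ the condition $\tilde g(z)\neq c\,\tilde h(z)$ for every $|c|>1$ is exactly the statement that $\tilde g(z)/\tilde h(z)$ lies in the closed unit disk, and a short continuity argument then propagates the bound across the zero set of $\tilde h$ to the whole of $\hat T$.

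The non-vanishing claim is the heart of the proof, and Bochner's Tube Theorem supplies it directly. On $T$ itself, since $|g|\le|h|$ and $h$ is nowhere zero, for $|c|>1$ the pointwise bound
\[
  |g-ch| \;\ge\; |c|\,|h| - |g| \;\ge\; (|c|-1)\,|h| \;>\; 0
\]
shows that $(g-ch)^{-1}$ is a holomorphic function on $T$. By Bochner's Tube Theorem it admits a holomorphic continuation $H_c$ to $\hat T$. The identity $H_c\cdot(g-ch)\equiv 1$ on $T$ extends by uniqueness of holomorphic continuation to the identity $H_c\cdot(\tilde g - c\tilde h)\equiv 1$ on all of $\hat T$, so $\tilde g - c\tilde h$ is nowhere zero on $\hat T$.

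To conclude, note that $h$ is not identically zero on $T$, so by the identity principle $\tilde h$ is not identically zero on the connected convex tube $\hat T$; its zero locus $Z=\{\tilde h=0\}$ is an analytic subset of complex codimension one, hence nowhere dense. On the open dense set $\hat T\setminus Z$ the function $\tilde g/\tilde h$ is holomorphic, and the previous paragraph forces $|\tilde g/\tilde h|\le 1$ there. Continuity of $|\tilde g|$ and $|\tilde h|$ then yields the same inequality on the whole of $\hat T$. I do not anticipate a serious obstacle; the only delicate point is the handling of the zero set of $\tilde h$, but the identity principle keeps it small enough for continuity to transfer the bound from the dense complement to all of $\hat T$. (An alternative, conceptually parallel route would apply Theorem~\ref{Preimage is domain of holomorphy} to the holomorphic map $(\tilde g,\tilde h):\hat T\to\C^2$ together with the domain of holomorphy $\{(u,v):v\neq 0,\ |u/v|<r\}$ for $r>1$, but the route above avoids that machinery.)
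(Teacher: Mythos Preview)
Your proof is correct, and it takes a genuinely different route from the paper's. The paper forms the quotient $f=g/h$ on $T$ (where $h$ has no zeros), extends it via Bochner to a holomorphic $\tilde f$ on all of $\hat T$, and then invokes Theorem~\ref{Preimage is domain of holomorphy}: the set $\tilde f^{-1}(B(0,1))$ is a domain of holomorphy containing $T$, so every holomorphic function on it extends (through $T$, by Bochner) to all of $\hat T$, forcing this preimage to equal $\hat T$.

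Your argument replaces the appeal to Theorem~\ref{Preimage is domain of holomorphy} by a family of applications of Bochner: extending $(g-ch)^{-1}$ for each $|c|>1$ and reading off nonvanishing of $\tilde g-c\tilde h$ via the identity principle. This is more elementary in that it uses only Bochner plus one-variable-style facts (identity principle, density of the complement of an analytic zero set, continuity), whereas the paper's route is shorter but leans on the extra structural result about preimages of domains of holomorphy. Note also that your approach sidesteps a small wrinkle in the paper's write-up (the hypothesis gives only $|f|\le 1$, not $|f|<1$, so one really works with $B(0,r)$ for $r>1$ and lets $r\downarrow 1$); your choice of $|c|>1$ already builds in that slack. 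The handling of the zero set $Z$ of $\tilde h$ is fine: since $|\tilde g|\le|\tilde h|$ on the dense complement, continuity forces $\tilde g$ to vanish on $Z$ as well, so the inequality holds trivially there.
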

	\begin{proof}
		Since $h(z)$ is nonzero, the function $f(z)=g(z)/h(z)$ is holomorphic in $T$, and hence has a holomorphic continuation $\tilde f$ to $\hat T$. By our assumptions, $\lvert \tilde f(z)\rvert \leq 1$ for all $z\in T,$ so $T\subset \tilde f^{-1}(B(0,1))$. However, by Theorem \ref{Preimage is domain of holomorphy}, $\tilde f^{-1}(B(0,1))$ is a domain of holomorphy, so it is the whole $\hat T.$
	\end{proof}
	
	\begin{acknowledgements}
		I am very thankful to my supervisor Chantal David for her encouragement and many useful conversations. I also thank Alexandra Florea for useful coments on an earlier version of this paper.
		
		The author was supported by bourse de doctorat en recherche (B2X) of Fonds de recherche du Québec -- Nature et technologies (FRQNT).
		
		I would like to thank the anonymous referee for his careful reading of the paper and useful comments.
	\end{acknowledgements}

\end{document}